\def\bd#1{\text{\boldmath${#1}$}}
\newtheorem{theorem}{Theorem}[section]
\newtheorem{iTheorem}{Theorem}
\newtheorem{lemma}[theorem]{Lemma}
\newtheorem{proposition}[theorem]{Proposition}
\theoremstyle{definition}  
\newtheorem{example}[theorem]{Example}
\newtheorem{remark}[theorem]{Remark}
\newcommand{\End}{\operatorname{End}}
\newcommand{\Hom}{\operatorname{Hom}} 
\newcommand{\Rep}{\operatorname{Rep}}
\newcommand{\Res}{\operatorname{Res}}
\newcommand{\Z}{\mathbb{Z}}
\newcommand{\C}{\mathbb{C}}
\newcommand{\arup}[1]{\stackrel{#1}{\longrightarrow}}
\newcommand{\unit}{\mathds{1}}
\newcommand{\un}{\eta}
\newcommand{\coun}{\varepsilon}
\newcommand{\mult}{\mu}
\newcommand{\comult}{\Delta}
\newcommand{\symm}{ s}
\newcommand{\jelly}{ j}
\newcommand{\Stirling}{\mathsf{S}}
\newcommand{\Bell}{\mathsf{B}}
\newcommand{\B}{\mathcal{B}}
\newcommand{\JB}{\mathcal{JB}}
\renewcommand{\P}{\mathcal{P}}
\newcommand{\JP}{\mathcal{JP}}
\renewcommand{\k}{\Bbbk}
\newcommand{\jellycolor}{blue}
\newcommand{\crosscap}{\mathord{
\begin{tikzpicture}[baseline = 0pt]
        \draw[-,thick] (0,-.01) to[out=up, in=left] (0.1,0.2) to[out=right, in=up] (0.2,-.01);
        \draw[-,thick] (0.1,-.01) to[out=up, in=left] (0.2,0.2) to[out=right, in=up] (0.3,-.01);
\end{tikzpicture}
}}
\newcommand{\shift}{7}
\begin{document}

\title{Jellyfish partition categories}

\author[Comes]{Jonathan Comes}
\email{jonnycomes@gmail.com}

\date{\today}


\begin{abstract}
For each positive integer $n$, we introduce a monoidal category $\JP(n)$ using a generalization of partition diagrams. When the characteristic of the ground field is either 0 or at least $n$, we show $\JP(n)$ is monoidally equivalent to the full subcategory of $\Rep(A_n)$ whose objects are tensor powers of the natural $n$-dimensional permutation representation of the alternating group $A_n$. 
\end{abstract}

\maketitle  

\section{Introduction}\label{section: Introduction}

Let $\k$ denote a field with $\operatorname{char}\k\not=2$. Classical Schur-Weyl duality, a staple in representation theory,  concerns the commuting actions of the general linear group $GL_n(\k)$ and the symmetric group $S_k$ on the space $V^{\otimes k}$ where $V=\k^n$. In particular, it says the map $\k[S_k]\to\End_{GL_n(\k)}(V^{\otimes k})$ is surjective for all $k$ and $n$. If we replace $GL_n(\k)$ with its subgroup $S_n$ of all permutation matrices, then the analog of the group algebra $\k[S_k]$ is the partition algebra $P_k(n)$ introduced by Martin \cite{Martin1991}. Indeed, Martin's partition algebras are equipped with an action on $V^{\otimes k}$ which commutes with the action of $S_k$, and the maps $P_k(n)\to\End_{S_n}(V^{\otimes k})$ are always surjective. This result easily extends to maps between arbitrary (not necessarily equal) tensor powers of $V$. More precisely, there is a $\k$-linear monoidal category $\P(n)$, the \emph{partition category} (see \S\ref{subsection: the category P}), which is equipped with a full monoidal functor $\Phi:\P(n)\to\Rep(S_n)$ (see \S\ref{subsection: the functor Phi}) and the image of $\Phi$ is precisely the full subcategory of $\Rep(S_n)$ with objects $V^{\otimes k}$ for all $k\geq 0$. 

In this paper we describe a \emph{jellyfish partition category} $\JP(n)$ which plays the role of $\P(n)$ when we replace $S_n$ with the alternating group $A_n$. The category $\JP(n)$ is a $\k$-linear monoidal category defined by adding one extra generator and three extra relations to a known presentation of the partition category (see \S\ref{definition of JP}). In particular, $\JP(n)$ is equipped with a monoidal functor $\P(n)\to\JP(n)$ which allows us to view partition diagrams (morphisms in $\P(n)$) as morphisms in $\JP(n)$. We give a complete diagrammatic description of morphisms in $\JP(n)$ by associating a ``jellyfish diagram" to the additional generator (see \S\ref{JP diagrams}). The category $\JP(n)$ is also equipped with a monoidal functor $\Psi:\JP(n)\to\Rep(A_n)$, which makes the following diagram commute:
\begin{equation}\label{square of functors}
    \begin{tikzcd} 
    \P(n) \arrow{rr}{\Phi}\arrow{d} && \Rep(S_n)\arrow{d}{\Res^{S_n}_{A_n}}\\ 
    \JP(n) \arrow{rr}{\Psi} && \Rep(A_n)
    \end{tikzcd}
\end{equation}
The first of two main results of this paper is the following:
\begin{iTheorem}\label{Psi is full}
    The functor $\Psi$ is full.   
\end{iTheorem}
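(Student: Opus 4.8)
Write $V=\k^n$ for the natural permutation module and $\Res$ for $\Res^{S_n}_{A_n}$. The plan is to split $\Hom_{A_n}(V^{\otimes k},V^{\otimes l})$ into an ``$S_n$-equivariant'' part, handled for free by the commuting square \eqref{square of functors}, and a complementary ``sign-twisted'' part, which I would produce from the jellyfish generator. First, fix an odd permutation $\tau\in S_n\setminus A_n$; as $\tau^2\in A_n$ and $\operatorname{char}\k\ne2$, the map $f\mapsto\tau_N\circ f\circ\tau_M^{-1}$ is an involution of $\Hom_{A_n}(\Res M,\Res N)$, giving
\[
\Hom_{A_n}(\Res M,\Res N)\;=\;\Hom_{S_n}(M,N)\;\oplus\;\Hom^{-}(M,N),
\]
where $\Hom^{-}(M,N)$ is its $(-1)$-eigenspace. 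Since $\operatorname{sgn}\otimes\operatorname{sgn}\cong\unit$, $\Res\operatorname{sgn}\cong\unit$, and (projection formula) $\operatorname{Ind}^{S_n}_{A_n}\Res N\cong N\otimes(\unit\oplus\operatorname{sgn})$, one identifies $\Hom^{-}(M,N)\cong\Hom_{S_n}(\operatorname{sgn}\otimes M,N)$. Taking $M=V^{\otimes k}$, $N=V^{\otimes l}$: by fullness of $\Phi$ the summand $\Hom_{S_n}(V^{\otimes k},V^{\otimes l})$ equals $(\Res\circ\Phi)\big(\Hom_{\P(n)}(k,l)\big)$, and by commutativity of \eqref{square of functors} this lies in the image of $\Psi$. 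So the theorem reduces to realizing, via $\Psi$, every $S_n$-homomorphism $\psi\colon\operatorname{sgn}\otimes V^{\otimes k}\to V^{\otimes l}$, regarded through the identification above as an element of $\Hom^{-}(V^{\otimes k},V^{\otimes l})$.

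Let $\mathfrak j\colon\unit\to V^{\otimes n}$ be the jellyfish generator. By construction $\Psi(\mathfrak j)$ is the $A_n$-homomorphism $\k\to V^{\otimes n}$ whose image spans a copy of the sign module inside $V^{\otimes n}$: up to scalar it sends $1$ to $\sum_{\sigma\in S_n}\operatorname{sgn}(\sigma)\,e_{\sigma(1)}\otimes\cdots\otimes e_{\sigma(n)}$, a vector on which $S_n$ acts through $\operatorname{sgn}$. Identifying $\Res\operatorname{sgn}$ with $\unit$ in the evident way, $\Psi(\mathfrak j)$ is thus the restriction of an injective $S_n$-homomorphism $\beta\colon\operatorname{sgn}\into V^{\otimes n}$. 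Now suppose that, given $\psi\colon\operatorname{sgn}\otimes V^{\otimes k}\to V^{\otimes l}$, we can find an $S_n$-homomorphism $g\colon V^{\otimes(n+k)}\to V^{\otimes l}$ extending $\psi$ along the monomorphism $\beta\otimes\id_{V^{\otimes k}}$, i.e.\ with $g\circ(\beta\otimes\id)=\psi$. Fullness of $\Phi$ gives $g=\Phi(p)$ for some $p\colon n+k\to l$ in $\P(n)$; pushing $p$ into $\JP(n)$ and setting $q:=p\circ(\mathfrak j\otimes\id_k)\colon k\to l$, unwinding $\Psi(q)=\Phi(p)\circ(\Psi(\mathfrak j)\otimes\id_{V^{\otimes k}})$ through the identification of $\Hom^{-}$ with $\Hom_{S_n}(\operatorname{sgn}\otimes-,-)$ shows that $\Psi(q)$ is precisely the odd morphism corresponding to $\psi$. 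So everything reduces to one extension problem: \emph{every $S_n$-homomorphism $\operatorname{sgn}\otimes V^{\otimes k}\to V^{\otimes l}$ extends along $\beta\otimes\id_{V^{\otimes k}}$ to $V^{\otimes(n+k)}$.}

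This is where the hypothesis on $\operatorname{char}\k$ enters, and I expect it to be the heart of the matter. Since $V\cong\operatorname{Ind}^{S_n}_{S_{n-1}}\unit$, the module $V$ is projective over $\k S_n$ iff the trivial $\k S_{n-1}$-module is projective, i.e.\ iff $\operatorname{char}\k\nmid(n-1)!$ — which, given $\operatorname{char}\k\ne2$, is precisely the hypothesis $\operatorname{char}\k=0$ or $\operatorname{char}\k\ge n$. As $\k S_n$ is self-injective, projective and injective modules coincide, so $V$ — hence every $V^{\otimes l}$ with $l\ge1$ — is injective; therefore every $S_n$-homomorphism into $V^{\otimes l}$ extends along every monomorphism, in particular along $\beta\otimes\id$, which solves the extension problem, and with it the theorem, whenever $l\ge1$. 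The boundary cases $l=0$ are handled symmetrically, using the rotation $\mathfrak j^{\vee}\colon V^{\otimes n}\to\unit$ of the jellyfish (built from $\mathfrak j$ and the ``cap'' morphisms already present in $\P(n)$), whose image under $\Psi$ is the antisymmetrization $V^{\otimes n}\twoheadrightarrow\operatorname{sgn}$, together with projectivity of $V^{\otimes k}$ in place of injectivity of $V^{\otimes l}$; and when $k=l=0$ the odd part already vanishes, since $\Hom_{S_n}(\operatorname{sgn},\unit)=0$. The remaining, more routine work is to extract the description of $\Psi(\mathfrak j)$ used above from the definition of $\Psi$ and the three relations of \S\ref{definition of JP}, and to carry out the diagrammatic bookkeeping (in the language of \S\ref{JP diagrams}) identifying $\Psi(q)$ with the odd morphism attached to $\psi$; the conceptual point, however, is that ``$\operatorname{char}\k=0$ or $\ge n$'' is exactly the condition making $V$, hence every tensor power of $V$, a projective — equivalently injective — $\k S_n$-module.
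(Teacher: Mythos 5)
Your argument, where it applies, is essentially correct and takes a genuinely different route from the paper: you split $\Hom_{A_n}(V^{\otimes k},V^{\otimes \ell})$ into the $+1$ and $-1$ eigenspaces of conjugation by an odd permutation, dispose of the even part via fullness of $\Phi$ and the commuting square (\ref{square of functors}), and realize the odd part $\Hom_{S_n}(\operatorname{sgn}\otimes V^{\otimes k},V^{\otimes \ell})$ by precomposing a partition morphism with the (rotated) jellyfish. That reduction, and the identification of $\Psi(p\circ(\jelly^*\otimes 1_k))$ with the odd morphism attached to $\psi$, are sound. The problem is the step where you solve the extension problem along $\beta\otimes\id$: you invoke injectivity of $V^{\otimes \ell}$ (equivalently projectivity of $V=\operatorname{Ind}_{S_{n-1}}^{S_n}\unit$), which holds exactly when $\operatorname{char}\k=0$ or $\operatorname{char}\k\geq n$. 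But Theorem~\ref{Psi is full} carries no such hypothesis --- the only standing assumption is $\operatorname{char}\k\neq 2$; the characteristic restriction belongs to Theorem~\ref{Psi is faithful} (faithfulness), not to fullness. So your proof establishes fullness only in a restricted range of characteristics and leaves the cases $2<\operatorname{char}\k<n$ unproved; your closing claim that the characteristic bound is ``the heart of the matter'' for this theorem is not right.

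The gap is real in the sense that your chosen tool fails outside that range ($V$ is then not injective, so the blanket ``every map into $V^{\otimes \ell}$ extends along every monomorphism'' is unavailable), even though the specific surjectivity you need --- that restriction along $\beta\otimes\id$ maps $\Hom_{S_n}(V^{\otimes(n+k)},V^{\otimes\ell})$ onto $\Hom_{S_n}(\operatorname{sgn}\otimes V^{\otimes k},V^{\otimes\ell})$ --- is in fact true for all $\operatorname{char}\k\neq2$. The paper gets this characteristic-free statement combinatorially rather than homologically: after reducing to $\Hom(k+\ell,0)$ by caps, it classifies the $A_n$-orbits of the monomial basis of $V^{\otimes k}$ (an $S_n$-orbit $O_D$ stays a single $A_n$-orbit when $D$ has at most $n-2$ parts and splits into two orbits $O_D^{\pm}$, distinguished by the sign of $\Psi(\jelly_D)$, when $D$ has $n-1$ or $n$ parts) and then writes down explicit preimages: $\Psi(x_D)=f_D$ and $\Psi(\tfrac12(x_D\pm\jelly_D))=f_D^{\pm}$, using only that $2$ is invertible. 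To repair your argument you would need to replace the injectivity step by such an explicit construction (or an equivalent orbit-counting argument) valid in all characteristics $\neq2$; as written, the proposal proves a strictly weaker theorem.
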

In particular, we get a version of Schur-Weyl duality between the actions of the alternating group $A_n$ and the jellyfish partition algebra $JP_k(n):=\End_{\JP(n)}(k)$ on the space $V^{\otimes k}$. Our second main result requires some restrictions of the characteristic of the ground field:

\begin{iTheorem}\label{Psi is faithful}
    If $n>1$ and either $\operatorname{char}\k=0$ or $\operatorname{char}\k\geq n$, then $\Psi$ is faithful. Hence, $\JP(n)$ is monoidally equivalent to the full subcategory of $\Rep(A_n)$ whose objects are tensor powers of the natural $n$-dimensional permutation representation.  
\end{iTheorem}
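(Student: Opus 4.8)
The plan is to prove faithfulness of $\Psi$ by a dimension count: since $\Psi$ is already full by Theorem~\ref{Psi is full}, it suffices to show that for all $k,\ell \geq 0$ we have $\dim_{\k}\Hom_{\JP(n)}(\ell,k) \leq \dim_{\k}\Hom_{A_n}(V^{\otimes \ell}, V^{\otimes k})$, and then fullness forces equality and injectivity of $\Psi$ on each Hom-space. The right-hand side is computable once $\operatorname{char}\k = 0$ or $\operatorname{char}\k \geq n$: in that regime $\k[A_n]$ is semisimple and $V = \k^n$ has known decomposition, so $\dim\Hom_{A_n}(V^{\otimes\ell},V^{\otimes k})$ is the number of $A_n$-orbits on pairs, or more efficiently can be read off from $\dim\Hom_{S_n}(V^{\otimes\ell},V^{\otimes k})$ plus a correction coming from the fact that $\Res^{S_n}_{A_n}$ glues together pairs of $S_n$-irreducibles related by the sign twist. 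Concretely, $\dim\Hom_{A_n}(V^{\otimes\ell},V^{\otimes k}) = \dim\Hom_{S_n}(V^{\otimes\ell},V^{\otimes k}) + \dim\Hom_{S_n}(V^{\otimes\ell},V^{\otimes k}\otimes\operatorname{sgn})$, the second term counting the ``extra'' morphisms that become available after restriction.

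**The first step** is therefore to pin down the left-hand side. Using the diagrammatic description of $\JP(n)$ from \S\ref{JP diagrams}, every morphism $\ell \to k$ is a $\k$-linear combination of jellyfish diagrams; I would argue via the three defining relations of $\JP(n)$ that any diagram with two or more jellyfish can be rewritten as a linear combination of ordinary partition diagrams (zero jellyfish), so that $\Hom_{\JP(n)}(\ell,k)$ is spanned by partition diagrams together with diagrams containing exactly one jellyfish. The partition diagrams span a copy of $\Hom_{\P(n)}(\ell,k)$, which maps into the $S_n$-equivariant maps; the single-jellyfish diagrams should account for exactly the sign-twisted Hom-space. So the combinatorial heart of the argument is: (i) produce a spanning set of $\Hom_{\JP(n)}(\ell,k)$ of size at most $\dim\Hom_{\P(n)}(\ell,k) + (\text{number of single-jellyfish normal forms})$, and (ii) check this upper bound matches $\dim\Hom_{A_n}(V^{\otimes\ell},V^{\otimes k})$.

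**For the upper bound** on the number of single-jellyfish normal forms, I expect the relations to let me push the jellyfish to a standard location and reduce the diagram attached to it, so that single-jellyfish diagrams are indexed by partition diagrams of a shifted type — matching the combinatorics of $\Hom_{S_n}(V^{\otimes\ell},V^{\otimes k}\otimes\operatorname{sgn})$, which by classical invariant theory for $S_n$ acting with a sign twist is again governed by partition-like diagrams (with the constraint that blocks touching the ``sign'' factor behave antisymmetrically — this is where the $n$ legs of the jellyfish, encoding an alternating tensor on $V$, enter). The hypotheses $n > 1$ and $\operatorname{char}\k = 0$ or $\operatorname{char}\k \geq n$ are exactly what is needed for $V^{\otimes n}$ to contain the sign representation with the correct multiplicity and for the relevant $\k[S_n]$- and $\k[A_n]$-modules to be semisimple, so that the dimension formulas are valid; in small characteristic the alternating idempotent $\tfrac{1}{n!}\sum_{\sigma}\operatorname{sgn}(\sigma)\sigma$ degenerates.

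**The main obstacle** I anticipate is step (i): showing the proposed spanning set is not just spanning but has size no larger than the target dimension — i.e., that the three relations suffice to eliminate all multi-jellyfish diagrams and to normalize single-jellyfish ones without any further hidden relations that would shrink the space below $\dim\Hom_{A_n}$. Equivalently, one must rule out ``accidental'' linear dependence; fullness of $\Psi$ gives the inequality in one direction for free, so the delicate half is constructing enough linearly independent elements (or, dually, enough diagram reductions) to match it exactly. Once both inequalities $\dim\Hom_{\JP(n)}(\ell,k) \leq \dim\Hom_{A_n}(V^{\otimes\ell},V^{\otimes k})$ (from the spanning set) and $\geq$ (from fullness of $\Psi$) are in hand, $\Psi$ is faithful on every Hom-space, hence faithful; combined with fullness and the fact that $\Psi$ is essentially surjective onto the named full subcategory by construction, this yields the claimed monoidal equivalence.
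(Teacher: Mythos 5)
Your overall architecture is the same as the paper's: use fullness of $\Psi$ (Theorem \ref{Psi is full}) for the inequality $\dim\Hom_{\JP(n)}\geq\dim\Hom_{A_n}$, and close the gap by exhibiting a spanning set of $\Hom_{\JP(n)}(k,\ell)$ of size $\dim\Hom_{A_n}(V^{\otimes k},V^{\otimes\ell})$, consisting of partition diagrams plus single-jellyfish diagrams. Your computation of the target dimension as $\dim\Hom_{S_n}(V^{\otimes \ell},V^{\otimes k})+\dim\Hom_{S_n}(V^{\otimes \ell},V^{\otimes k}\otimes\operatorname{sgn})$ is a correct (Frobenius reciprocity) variant of the paper's orbit count in Proposition \ref{prop: f pm basis}, and your reduction to at most one jellyfish via relation (\ref{jellyfish relation}) matches the paper.

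However, there is a genuine gap at the critical step. You assert that ``the partition diagrams span a copy of $\Hom_{\P(n)}(\ell,k)$,'' so your proposed upper bound is $\dim\Hom_{\P(n)}(\ell,k)+(\text{single-jellyfish count})=\Bell(k+\ell)+(\cdots)$. This does not match the target: $\dim\Hom_{A_n}(V^{\otimes\ell},V^{\otimes k})=\sum_{p\leq n}\Stirling(k+\ell,p)+\Stirling(k+\ell,n-1)+\Stirling(k+\ell,n)$, which for $k+\ell$ large is far smaller than $\Bell(k+\ell)$. In fact the partition diagrams do \emph{not} remain linearly independent in $\JP(n)$ (Example \ref{example p dependent} exhibits a dependency), and the whole technical content of the paper's proof is to show that inside $\JP(n)$ every partition diagram with more than $n$ parts is a linear combination of partition diagrams with at most $n$ parts (Lemmas \ref{reduce Y} and \ref{span partition diagrams}). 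This is proved by evaluating a four-jellyfish diagram in two different orders and solving for the offending diagram $Y:n+1\to 0$, which requires $(n-1)!$ to be invertible --- and \emph{this} is where the hypothesis $\operatorname{char}\k=0$ or $\operatorname{char}\k\geq n$ enters, not in the semisimplicity of $\k[A_n]$ or the idempotent $\frac{1}{n!}\sum_\sigma\operatorname{sgn}(\sigma)\sigma$ as you suggest. Without this collapse of high-part partition diagrams your spanning set is too large and the dimension count does not close; you have identified the need to rule out a spanning set that is too \emph{small}, but the actual difficulty runs in the opposite direction. (A smaller inaccuracy: relation (\ref{jellyfish relation}) removes jellyfish two at a time, so a diagram with an odd number of jellyfish reduces to one with exactly one jellyfish, not to zero; your subsequent statement of the spanning set is nonetheless correct. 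You would also still need the analogue of Lemma \ref{jelly-D span} to normalize the single-jellyfish diagrams to the $\jelly_D$ with $D$ having $n$ or $n-1$ parts.)
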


We note that this theorem does not have an analog for partition categories. In fact, $\Phi$ is never faithful (see part (2) of Theorem \ref{thm: Phi is full and kernel}). However, since the restriction functor $\Res^{S_n}_{A_n}$ is faithful, the commutative diagram (\ref{square of functors}) shows that the kernel of $\Phi$ is precisely the kernel of the functor $\P(n)\to\JP(n)$. In other words, the three defining relations of $\JP(n)$ are enough to see the entire kernel of $\Phi$.  

\subsection{Jellyfish Brauer categories}\label{jellyfish Brauer} 
Just as the partition category admits a full functor to the category of representations of the symmetric group, the so-called Brauer category $\B(n)$ is equipped with a full functor to the category of representations of the orthogonal group $O(n)$ (see for instance \cite{LZ}). There is also a \emph{jellyfish Brauer category} $\JB(n)$, which can be defined by taking a known presentation of $\B(n)$ as a $\k$-linear monoidal category (see \cite[Theorem 2.6]{LZ} or \cite[\S1.1]{BE}) and adding to it a new generator (an $n$-legged jellyfish) and three new relations (\ref{jellyfish symmetry})--(\ref{jellyfish relation}). More concisely, $\JB(n)$ is the free $\k$-linear symmetric monoidal category generated by a self-dual object of dimension $n$ which admits a skew-symmetric $n$-form $\jelly$ satisfying (\ref{jellyfish relation}). The category $\JB(n)$ is equipped with a monoidal functor to the category of representations of the special orthogonal group $SO(n)$. Quite recently, Lehrer-Zhang showed that this functor is fully faithful when $\k=\C$ \cite[Theorem 6.1]{LZ2016}. In particular, the corresponding \emph{jellyfish Brauer algebra} $JB_k(n):=\End_{\JB(n)}(k)$ is isomorphic to $\End_{SO(n)}(V^{\otimes k})$ where $V$ is the natural $n$-dimensional representation of $SO(n)$. The desire for such a diagram algebra was the motivation for the work that produced this paper. The existence of such a diagram algebra was eluded to by Brauer in \cite{Brauer}. Brauer proposed a diagram algebra consisting of the usual Brauer diagrams along with Brauer diagrams with $n$ unconnected dots. Translating these diagrams into jellyfish Brauer diagrams amounts to adding a jellyfish whose $n$ legs, read from left to right, are connected to the $n$ unconnected dots, reading the dots from left to right along the top row and then along the bottom row. For example, here is one of Brauer's diagrams alongside the corresponding jellyfish diagram in $JB_5(4)$:
\[
    \begin{tikzpicture}[baseline = 12pt]
    \draw[-,thick] (5,0) to (5,1.5);
    \draw[-,thick] (4,0) to (4,1.5);
    \draw[-,thick] (3,0) to (3,1.5);
    \draw \foreach \m in {1,...,5} {
        (\m,0) node[circle, draw, fill=white, thick, inner sep=0pt, minimum width=4pt]{}
        };
    \draw \foreach \m in {1,...,5} {
        (\m,1.5) node[circle, draw, fill=white, thick, inner sep=0pt, minimum width=4pt]{}
        };
    \draw \foreach \m in {1,...,5} {
        (\m+\shift,0) node[circle, draw, fill=black,inner sep=0pt, minimum width=4pt]{}
        };
    \draw \foreach \m in {1,...,5} {
        (\m+\shift,1.5) node[circle, draw, fill=black,inner sep=0pt, minimum width=4pt]{}
        };
    \draw[-,thick] (5+\shift,0) to (5+\shift,1.5);
    \draw[-,thick] (4+\shift,0) to (4+\shift,1.5);
    \draw[-,thick] (3+\shift,0) to (3+\shift,1.5);
    \filldraw[fill=\jellycolor,-,thick] (1.75+\shift,0.75) to (2.75+\shift,0.75) to[out=up, in=right] (2.25+\shift,1.25) to[out=left, in=up] (1.75+\shift,0.75);
    \draw[-,thick] (1.75+1/8+\shift,0.75) to[out=down, in=right] (1.6+\shift,0.6) to[out=left, in=down] (1+\shift,1.5);
    \draw[-,thick] (1.75+3/8+\shift,0.75) to[out=down, in=down] (1.4+\shift,0.65) to[out=up, in=-150] (2+\shift,1.5);
    \draw[-,thick] (1.75+5/8+\shift,0.75) to[out=down, in=up] (1+\shift,0);
    \draw[-,thick] (1.75+7/8+\shift,0.75) to[out=down, in=up] (2+\shift,0);
\end{tikzpicture}
\]
Brauer did not give a rule for multiplying his diagrams with unconnected vertices. Instead, he wrote ``The rule for the multiplication $\ldots$ can also be formulated. It is, however, more complicated and shall not be given here." A multiplication rule for Brauer's diagrams was formulated in \cite{Grood}. The resulting \emph{even Brauer algebras} were studied further in \cite{Nebhani}. The even Brauer algebras have a basis given by Brauer's proposed diagrams, however the multiplication rule is not associative. As a consequence of \cite[Theorem 6.1]{LZ2016}, the jellyfish Brauer algebras are associative quotients of the even Brauer algebras, with the quotient map given by adding jellyfish in the manner prescribed above (compare with Remark \ref{Grood remark}).

\subsection{Outline}\label{subsection: outline}
In \S\ref{section: The partition category} we give an exposition of the definitions and properties of the partition category which are relevant to this paper. In particular, we give both a diagrammatic description of $\P(n)$ in terms of partition diagrams as well as a presentation of $\P(n)$ as a $\k$-linear symmetric monoidal category in terms of generators and relations. We close \S\ref{section: The partition category} by proving the functor $\Phi$ mentioned above is full, and giving an explicit description of its kernel. In \S\ref{section: JP} we define the category $\JP(n)$ in terms of generators and relations, and then develop a diagrammatic calculus for its morphisms in terms of jellyfish partition categories. In \S\ref{Psi} we study the action of the alternating group $A_n$ on $V^{\otimes n}$, which allows us to prove Theorem \ref{Psi is full} in \S\ref{proof of theorem A}. We close that section in \S\ref{counting dimensions} with a description of the dimensions of relevant Hom-spaces in terms of Stirling and Bell numbers. The whole of \S\ref{section: Psi is faithful} is devoted to the proof of Theorem \ref{Psi is faithful}.

\subsection{Acknowledgments} 
I would like to thank Jonathan Kujawa for initiating this project by pointing out the paper \cite{Grood}, and for several useful conversations since. Part of this project was completed while I enjoyed a visit to the Max Planck Institute in Bonn. I would like to thank the institute for their hospitality.

\section{The partition category}\label{section: The partition category}

In this section we review the properties of the partition category relevant to this paper. The results here are not new, but proofs are provided since the majority of the content will be needed in upcoming sections. We start with the definition of the partition category, following the analogous treatment of partition algebras found in \cite{HR}.  

\subsection{Partition diagrams}\label{subsection: Partition diagrams}
Given $k,\ell\in\Z_{\geq0}$,  a \emph{partition diagram of type $k\to \ell$} is a simple graph whose vertices are labeled by $$\{i~|~1\leq i\leq k\}\cup\{i'~|~1\leq i\leq \ell\}.$$ For example, here is a partition diagram of type $7\to 5$:
\tikzstyle{every node}=[inner sep=0pt, minimum width=4pt]
\[
    \begin{tikzpicture}[baseline = 12pt]
        \draw \foreach \m in {1,...,7} {
            (\m,0) node[circle, draw, fill=black]{}
            (\m,-.3) node{\m} 
            };
            \draw \foreach \m in {1,...,5} {
            (\m+1,1) node[circle, draw, fill=black]{}
            (\m+1.05,1.3) node{$\m'$} 
            };
            \draw[-,thick] (1,0) to[out=up, in=down] (2,1) to[out=down, in=up] (3,0);
            \draw[-,thick] (2,0) to[out=up,in=up] (4,0);
            \draw[-,thick] (4,1) to[out=down, in=up] (5,0) to[out=up, in=down] (6,1);
            \draw[-,thick] (3,1) to (7,0);
    \end{tikzpicture}
\]
We write $D:k\to \ell$ to indicate that $D$ is a partition diagram of type $k\to \ell$. As in the example above, our partition diagrams will always be drawn with two horizontal rows of vertices such that $1,\ldots,k$ are below $1',\ldots,\ell'$. With this convention in mind, we will omit the labels in partition diagrams for remainder of the paper.  The connected components of a partition diagram prescribe a partition of the set of vertices into mutually disjoint nonempty subsets, which we will refer to as \emph{parts}. We say that two partition diagrams are \emph{equivalent} if they give rise to the same set partition. For example, the following partition diagram is equivalent to the one pictured above:
\[
    \begin{tikzpicture}[baseline = 12pt]
        \draw \foreach \m in {1,...,7} {
            (\m,0) node[circle, draw, fill=black]{}
            };
            \draw \foreach \m in {1,...,5} {
            (\m+1,1) node[circle, draw, fill=black]{}
            };
            \draw[-,thick] (1,0) to[out=up, in=down] (2,1) to[out=down, in=up] (3,0) to[out=150, in=30] (1,0);
            \draw[-,thick] (2,0) to[out=up,in=up] (4,0);
            \draw[-,thick] (5,0) to (6,1);
            \draw[-,thick] (4,1) to[out=-30,in=-150] (6,1);
            \draw[-,thick] (3,1) to[out=-45,in=135] (7,0);
    \end{tikzpicture}
\]

We put a partial order on the set of all equivalence classes of partition diagrams of type $k\to \ell$ by declaring $D_1\leq D_2$ whenever the set partition corresponding to $D_2$ is coarser than that of $D_1$. In other words, $D_1\leq D_2$ if $D_1$ can be obtained by removing edges from $D_2$.  

Given partition diagrams $D_1:k\to \ell$ and
$D_2:\ell\to m$, we can stack $D_2$ on top of $D_1$ to 
obtain a graph $D_2\atop D_1$ with three rows of vertices. Let $\beta(D_1,D_2)$ denote the number of connected components in $D_2\atop D_1$ whose vertices are all in the middle row. 
Let $D_2\star D_1$ denote a partition diagram of type $k\to m$ with the following property: vertices are in the same connected component of $D_2\star D_1$ if and only if the corresponding vertices in the top and bottom rows of $D_2\atop D_1$ are in the same connected component. Note that $\beta(D_1,D_2)$ and the equivalence class of $D_2\star D_1$ are well-defined independent of the choice of equivalence class representatives for $D_1$ and $D_2$.  For example, if
\[
    D_1=
    \begin{tikzpicture}[baseline = 10pt, scale=0.75]
        \draw \foreach \m in {1,...,4} {
            (\m+1.5,0) node[circle, draw, fill=black]{}
            };
            \draw \foreach \m in {1,...,7} {
            (\m,1) node[circle, draw, fill=black]{}
            };
            \draw[-,thick] (1,1) to[out=-45,in=135] (3.5,0) to[out=60,in=-120] (5,1);
            \draw[-,thick] (2,1) to[out=down, in=down] (3,1);
            \draw[-,thick] (2.5,0) to[out=45, in=135] (5.5,0);
            \draw[-,thick] (4.5,0) to[out=45,in=-135] (7,1);
    \end{tikzpicture}
    \quad\text{and}\quad
    D_2=
    \begin{tikzpicture}[baseline = 10pt, scale=0.75]
        \draw \foreach \m in {1,...,7} {
            (\m,0) node[circle, draw, fill=black]{}
            };
            \draw \foreach \m in {1,...,5} {
            (\m+1,1) node[circle, draw, fill=black]{}
            };
            \draw[-,thick] (2,1) to[out=down,in=up] (1,0);
            \draw[-,thick] (2,0) to[out=45, in=135] (4,0);
            \draw[-,thick] (3,1) to[out=-69, in=-120] (5,1);
            \draw[-,thick] (4,1) to[out=down,in=up] (5,0) to[out=up,in=down] (6,1);
    \end{tikzpicture}
\]
\[
    \text{then}\quad
    {D_2\atop D_1}= ~
    \begin{tikzpicture}[baseline = 18pt, scale=0.75]
        \draw \foreach \m in {1,...,4} {
            (\m+1.5,0) node[circle, draw, fill=black]{}
            };
            \draw \foreach \m in {1,...,7} {
            (\m,1) node[circle, draw, fill=black]{}
            };
            \draw \foreach \m in {1,...,5} {
            (\m+1,2) node[circle, draw, fill=black]{}
            };
            \draw[-,thick] (1,1) to[out=-45,in=135] (3.5,0) to[out=60,in=-120] (5,1);
            \draw[-,thick] (2,1) to[out=down, in=down] (3,1);
            \draw[-,thick] (2.5,0) to[out=45, in=135] (5.5,0);
            \draw[-,thick] (4.5,0) to[out=45,in=-135] (7,1);
            \draw[-,thick] (2,2) to[out=down,in=up] (1,1);
            \draw[-,thick] (2,1) to[out=45, in=135] (4,1);
            \draw[-,thick] (3,2) to[out=-69, in=-120] (5,2);
            \draw[-,thick] (4,2) to[out=down,in=up] (5,1) to[out=up,in=down] (6,2);
    \end{tikzpicture}\quad.
\]
Thus, 
$D_2\star D_1=\mathord{
\begin{tikzpicture}[baseline = 8pt, scale=0.75]
    \draw \foreach \m in {1,...,4} {
        (\m+0.5,0) node[circle, draw, fill=black]{}
        };
        \draw \foreach \m in {1,...,5} {
        (\m,1) node[circle, draw, fill=black]{}
        };
        \draw[-,thick] (2,1) to[out=-30, in=-150] (4,1);
        \draw[-,thick] (1.5,0) to[out=30, in=150] (4.5,0);
        \draw[-,thick] (1,1) to[out=-60,in=120] (2.5,0);
        \draw[-,thick] (3,1) to[out=down,in=up] (2.5,0);
        \draw[-,thick] (5,1) to[out=-160,in=30] (2.5,0);
\end{tikzpicture}
}$ 
and $\beta(D_1,D_2)=2$. 

\subsection{The category $\P(n)$}\label{subsection: the category P}
Given $n\in\k$ we define the \emph{partition category} $\P(n)$
to be the
category with nonnegative integers as objects and
morphisms $\Hom_{\P(n)}(k, \ell)$ consisting of all formal
$\k$-linear combinations of equivalence classes of
partition diagrams of type $k\to \ell$.
Composition of partition diagrams is defined by setting $D_2 \circ D_1=n^{\beta(D_1,D_2)}D_2\star D_1$; it is easy to check that this is associative.
There is also a well-defined tensor product making $\P(n)$ into a strict 
monoidal category.
This is 
defined on diagrams so that $D_1 \otimes D_2$ is obtained by horizontally stacking $D_1$
to the left of $D_2$.

The endomorphism algebras in $\P(n)$ are the \emph{partition algebras} introduced in \cite{Martin1991}. We will write $P_k(n)=\End_{\P(n)}(k)$. 

The monoidal category $\P(n)$ can also be described via generators and relations. It is easy to show that $\P(n)$ is generated as a monoidal category by the following partition diagrams: 
\[\mult=~
    \begin{tikzpicture}[baseline = 8pt, scale=0.75]
        \draw \foreach \m in {1,...,2} {
        (\m,0) node[circle, draw, fill=black]{}
        };
        \draw \foreach \m in {1,...,1} {
        (\m+0.5,1) node[circle, draw, fill=black]{}
        };
        \draw[-,thick] (1,0) to[out=up, in=down] (1.5,1) to[out=down, in=up] (2,0);
    \end{tikzpicture}
    ~,\qquad
    \un=~
    \begin{tikzpicture}[baseline = 8pt, scale=0.75]
        \draw \foreach \m in {1,...,1} {
        (\m,1) node[circle, draw, fill=black]{}
        };
    \end{tikzpicture}
    ~,\qquad\comult=~
    \begin{tikzpicture}[baseline = 8pt, scale=0.75]
        \draw \foreach \m in {1,...,1} {
        (\m+0.5,0) node[circle, draw, fill=black]{}
        };
        \draw \foreach \m in {1,...,2} {
        (\m,1) node[circle, draw, fill=black]{}
        };
        \draw[-,thick] (1,1) to[out=down, in=up] (1.5,0) to[out=up, in=down] (2,1);
    \end{tikzpicture}
    ,\qquad\coun=~
    \begin{tikzpicture}[baseline = 8pt, scale=0.75]
        \draw \foreach \m in {1,...,1} {
        (\m,0) node[circle, draw, fill=black]{}
        };
    \end{tikzpicture}
    ~,\qquad\symm=~
    \begin{tikzpicture}[baseline = 8pt, scale=0.75]
        \draw \foreach \m in {1,...,2} {
        (\m,0) node[circle, draw, fill=black]{}
        };
        \draw \foreach \m in {1,...,2} {
        (\m,1) node[circle, draw, fill=black]{}
        };
        \draw[-,thick] (1,0) to[out=up,in=down] (2,1);
        \draw[-,thick] (2,0) to[out=up,in=down] (1,1);
    \end{tikzpicture}
~.\]
The morphism $s$ prescribes a symmetric braiding on $\P(n)$ and $(1,\mult,\un,\comult,\coun)$ is a special commutative Frobenius algebra. In fact, $\P(n)$ is equivalent to the free $\k$-linear symmetric monoidal category generated by an $n$-dimensional special commutative Frobenius algebra. Translating this into a statement concerning generators and relations gives us the content of the following theorem, which is essentially a consequence of a result of Abrams \cite{Abrams} on the category of 2-dimensional cobordisms. We refer the interested reader to \cite{Kock} for more details on the connection between 2-dimensional cobordisms and commutative Frobenius algebras.  

\begin{theorem}\label{g and r for P}
As a $\k$-linear monoidal category, $\P(n)$ is generated by the object $1$ and the morphisms $\mult:2\to 1, \un:0\to 1, \comult:1\to 2, \coun:1\to 0, \symm:2\to 2$ 
subject only to the following relations:
\begin{align}
\label{S}
\symm^2&=1_2,\\
\label{B}
(1_1\otimes \symm)\circ(\symm\otimes 1_1)\circ(1_1\otimes \symm)&=(\symm\otimes 1_1)\circ(1_1\otimes \symm)\circ (\symm\otimes 1_1),\\
\symm\circ(1_1\otimes \un)&=\un\otimes 1_1, \\
(1_1\otimes \mult)\circ(\symm\otimes 1_1)\circ(1_1\otimes \symm)&=\symm\circ(\mult\otimes 1_1),\\ 
(1_1\otimes \coun)\circ \symm&=\coun\otimes 1_1, \\
(1_1\otimes \symm)\circ(\symm\otimes 1_1)\circ(1_1\otimes \comult)&=(\comult\otimes 1_1)\circ \symm,\\
\label{unit}
\mult\circ (1_1\otimes \un)&=1_1,\\
\label{counit}
(1_1\otimes \coun)\circ \comult&=1_1=(\coun\otimes1_1)\circ\comult,\\
\label{frobenious}
(\mult\otimes 1_1)\circ (1_1\otimes \comult)&=\comult\circ \mult=(1_1\otimes \mult)\circ(\comult\otimes 1_1),\\
\label{commutative} 
\mult\circ \symm&=\mult,\\
\mult\circ \comult &= 1_1,\\
\label{last P relation}
\coun\circ \un &= n.
 \end{align}
\end{theorem}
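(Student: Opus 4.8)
Let $\mathcal{C}$ be the $\k$-linear monoidal category presented by the object $1$, the generating morphisms $\mult,\un,\comult,\coun,\symm$, and the relations in the statement. A routine check on diagrams shows that the partition diagrams denoted $\mult,\un,\comult,\coun,\symm$ satisfy all of those relations in $\P(n)$ --- equivalently, that $(1,\mult,\un,\comult,\coun)$ is a special commutative Frobenius algebra of dimension $n$ and $\symm$ is its symmetric braiding --- so there is a $\k$-linear monoidal functor $F\colon\mathcal{C}\to\P(n)$ that is the identity on objects and sends each generator to the partition diagram of the same name. The plan is to show $F$ is an isomorphism of categories. Fullness is the observation, already invoked above, that every partition diagram of type $k\to\ell$ is a composite of tensor products of the five generators: for each block of its underlying set partition insert an iterated multiplication followed by an iterated comultiplication, and then route the legs into the prescribed positions with copies of $\symm$. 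Since equivalence classes of partition diagrams span the morphism spaces of $\P(n)$, $F$ is full.

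For faithfulness it suffices to produce a spanning set of each $\Hom_{\mathcal{C}}(k,\ell)$ whose image under $F$ is $\k$-linearly independent: if $F(x)=0$ and $x=\sum_\pi a_\pi N_\pi$ for such a spanning set, then $\sum_\pi a_\pi F(N_\pi)=0$ forces all $a_\pi=0$. Index the candidate spanning set by set partitions $\pi$ of $\{1,\dots,k\}\sqcup\{1',\dots,\ell'\}$: with $B_1,\dots,B_r$ the blocks of $\pi$ and $p_i,q_i$ the numbers of elements of $B_i$ lying in $\{1,\dots,k\}$ and in $\{1',\dots,\ell'\}$ respectively, let $c_{p_i,q_i}\colon p_i\to q_i$ be an iterated comultiplication composed after an iterated multiplication and set $N_\pi=\sigma_{\mathrm{out}}\circ(c_{p_1,q_1}\otimes\cdots\otimes c_{p_r,q_r})\circ\sigma_{\mathrm{in}}$, where $\sigma_{\mathrm{in}},\sigma_{\mathrm{out}}$ are the evident permutation morphisms built from $\symm$ that gather inputs and outputs blockwise. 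Then $F(N_\pi)$ is exactly the partition diagram with underlying set partition $\pi$; as $\pi$ ranges over all set partitions of the $(k+\ell)$-element set these are pairwise inequivalent, hence linearly independent in $\P(n)$ (and their number, the Bell number $\Bell_{k+\ell}$, equals $\dim_\k\Hom_{\P(n)}(k,\ell)$). So the whole theorem comes down to showing that the $N_\pi$ span $\Hom_{\mathcal{C}}(k,\ell)$.

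That is the normal-form reduction, and it is where the work lies. Starting from an arbitrary $\k$-combination of composites of tensor products of the generators, relations \eqref{S} and \eqref{B} together with the four naturality relations --- which say $\mult,\un,\comult,\coun$ slide past the braiding --- allow one to migrate every occurrence of $\symm$ to the very bottom and the very top, leaving a composite of $\mult,\un,\comult,\coun$ in between; the remaining (co)associativity, unit, Frobenius, commutativity, the special relation $\mult\circ\comult=1_1$, and the dimension relation $\coun\circ\un=n$ then reduce that inner part to a tensor product of corollas $c_{p,q}$ times a power of $n$ (speciality kills all genus; $\coun\circ\un=n$ evaluates closed components). Re-sorting via the outer permutation layers expresses the result as a $\k$-multiple of some $N_\pi$. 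The delicate step is the ``straightening'' that forces all braidings into the outer two layers while keeping track of the inner cobordism; this is in essence Abrams' identification \cite{Abrams} of the free symmetric monoidal category on a commutative Frobenius algebra with the category of $2$-dimensional cobordisms, the relation $\mult\circ\comult=1_1$ restricting attention to its genus-zero part, so one may alternatively deduce the theorem from \cite{Abrams} (see also \cite{Kock}) once the dictionary between equivalence classes of partition diagrams and this genus-zero cobordism category is made explicit.
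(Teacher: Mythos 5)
Your proposal is correct, and in substance it rests on the same external input as the paper's proof, but it packages the argument differently. The paper argues top-down: it identifies $\P(n)$ with the quotient of the $\k$-linearized category $2Cob$ by the speciality and dimension relations, invokes Abrams' theorem that $2Cob$ is the free symmetric monoidal category on a commutative Frobenius algebra, and reads off the presentation from Kock. You instead build the comparison functor $F\colon\mathcal{C}\to\P(n)$ explicitly and run a normal-form argument: fullness via the blockwise decomposition of partition diagrams, and faithfulness by exhibiting the candidate spanning set $\{N_\pi\}$ whose images are the pairwise distinct partition diagrams, hence linearly independent. This buys a transparent account of the fullness and of the dimension count (Bell numbers), which the paper leaves implicit. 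The cost is that the spanning step --- straightening an arbitrary word into $\sigma_{\mathrm{out}}\circ(\bigotimes_i c_{p_i,q_i})\circ\sigma_{\mathrm{in}}$ times a power of $n$ --- is precisely the content of Abrams' classification, and you rightly end by deferring to it, so neither proof is more self-contained where it matters. Two small points to tighten: first, (co)associativity of $\mult$ and $\comult$, the left unit law, and cocommutativity are \emph{not} among the listed relations; they are derivable from the Frobenius relation (\ref{frobenious}) together with the (co)unit laws (\ref{unit})--(\ref{counit}) and (\ref{commutative}), but your reduction uses them, so you should either derive them or note explicitly that they follow (this is part of what Kock's presentation supplies). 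Second, your linear-independence argument tacitly uses that $F(N_\pi)$ is \emph{exactly} the single diagram $\pi$ with coefficient $1$ and no lower-order terms; that is true here because composition in $\P(n)$ of the corollas and permutations you use never closes off a component, but it is worth a sentence.
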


\begin{proof}
    By \cite{Abrams}, the category $2Cob$ of 2-dimensional cobordisms is equivalent to the free symmetric monoidal category generated by a commutative Frobenius algebra $(A,\mult_A,\un_A,\comult_A,\coun_A)$. Let $2Cob_\k$ denote the $\k$-linearization of $2Cob$ (i.e. the category with the same objects but with morphisms given by formal $\k$-linear combinations of morphisms in $2Cob$).  It is easy to see that $\P(n)$ is equivalent to the category obtained from $2Cob_\k$ by factoring out by the relations $\mult_A\circ\comult_A=1_A$ (the Frobenius algebra is special) and $\coun_A\circ\un_A=n$ ($A$ has dimension $n$). Thus $\P(n)$ is equivalent to the free symmetric monoidal category generated by an $n$-dimensional special commutative Frobenius algebra. The result can now be deduced from the presentation of $2Cob$ found in \cite{Kock}. 
\end{proof}

\subsection{Tensor powers of the natural representation of $S_n$}\label{repn Sn}

Let $V=\k^n$ with standard basis of unit vectors $v_1,\ldots,v_n$. The symmetric group $S_n$ acts on $V$ by permuting those basis elements: $\sigma\cdot v_i=v_{\sigma(i)}$ for all $\sigma\in S_n$ and $1\leq i\leq n$. For each tuple ${\bd i}=(i_1,\ldots,i_k)$ we set $v_{\bd i}:=v_{i_1}\otimes\cdots\otimes v_{i_k}$. The set $\{v_{\bd i} : 1\leq i_1,\ldots,i_k\leq n\}$ is a basis for the tensor power $V^{\otimes k}$. By convention we set $v_{()}=1$ in $V^{\otimes 0}=\unit$ (the trivial representation). The induced action of the symmetric group $S_n$ on $V^{\otimes k}$ also permutes our chosen basis: $\sigma\cdot v_{\bf i}=v_{\sigma(i_1)}\otimes\cdots\otimes v_{\sigma(i_k)}$. 

Given a partition diagram $D:k\to0$ we let $O_D$ denote the set of all $v_{\bd i}\in V^{\otimes k}$ such that $i_\ell=i_m$ if and only if the $\ell$th and $m$th vertices of $D$ are in the same part. For example, if 
\[
    D=~
    \begin{tikzpicture}[baseline = 6pt, scale=0.75]
        \draw \foreach \m in {1,...,5} {
        (\m,0) node[circle, draw, fill=black]{}
        };
        \draw[-,thick] (1,0) to[out=up, in=up] (3,0) to[out=up, in=up] (4,0);
        \draw[-,thick] (2,0) to[out=up, in=up] (5,0);
    \end{tikzpicture}
\]
then $O_D=\{v_i\otimes v_\ell\otimes v_i\otimes v_i\otimes v_\ell : 1\leq i\not=\ell\leq n\}$. 
Given a partition diagram $D:k\to 0$, let $f_D:V^{\otimes k}\to\unit$ be the $\k$-linear map defined by  
\[
    f_{D}(v_{\bd i})=
    \begin{cases}   
        1, & \text{if }v_{\bd i}\in O_D;\\
        0, & \text{if }v_{\bd i}\not\in O_D.
    \end{cases}
\]
It is easy to see that the $S_n$-orbits of our chosen basis for $V^{\otimes k}$ are precisely the $O_D$ with $D:k\to 0$ having at most $n$ parts. As a consequence, we have the following:

\begin{proposition}\label{prop: f basis}
The set of all $f_D$ with $D:k\to 0$ having at most $n$ parts is a basis for the space $\Hom_{S_n}(V^{\otimes k},\unit)$.
\end{proposition}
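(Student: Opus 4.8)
The plan is to show that the set $\{f_D : D:k\to 0 \text{ has at most } n \text{ parts}\}$ is both $\k$-linearly independent and spans $\Hom_{S_n}(V^{\otimes k},\unit)$. I would begin by recalling that any $S_n$-equivariant map $V^{\otimes k}\to\unit$ is determined by its values on the basis $\{v_{\bd i}\}$, and that $S_n$-equivariance forces such a value to depend only on the $S_n$-orbit of $v_{\bd i}$. Thus $\Hom_{S_n}(V^{\otimes k},\unit)$ is canonically isomorphic to the space of $\k$-valued functions on the set of $S_n$-orbits of $\{v_{\bd i}\}$.

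\smallskip

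Next I would make precise the bijection, already asserted just before the proposition, between $S_n$-orbits on $\{v_{\bd i} : 1\le i_1,\dots,i_k\le n\}$ and equivalence classes of partition diagrams $D:k\to 0$ with at most $n$ parts. Indeed, to a tuple ${\bd i}=(i_1,\dots,i_k)$ one associates the set partition of $\{1,\dots,k\}$ in which $\ell\sim m$ iff $i_\ell=i_m$; this partition is unchanged under the $S_n$-action, has at most $n$ parts since only $n$ distinct values are available, and two tuples lie in the same $S_n$-orbit iff they yield the same partition (one direction is clear; conversely, given two tuples with the same partition, the map sending the common value on each part of the first to the corresponding value of the second extends to an element of $S_n$ because each tuple uses at most $n$ values). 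Under this bijection, $f_D$ is exactly the indicator function of the orbit $O_D$, so the $f_D$ form the standard ``delta function'' basis of the space of functions on the orbit set. Linear independence is then immediate by evaluating a vanishing linear combination $\sum_D c_D f_D$ on a representative $v_{\bd i}\in O_D$ to get $c_D=0$, and spanning follows since an arbitrary $S_n$-map $g$ equals $\sum_D g(v_{\bd i_D})\, f_D$ where $v_{\bd i_D}$ is any chosen representative of $O_D$.

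\smallskip

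The only real content beyond bookkeeping is the claim that two tuples with the same associated set partition (each necessarily of at most $n$ parts) lie in a single $S_n$-orbit; I expect this to be the main (though still elementary) obstacle, and it hinges on the ``at most $n$ parts'' hypothesis, which guarantees that the required bijection between the sets of values used can be extended to a permutation of $\{1,\dots,n\}$. Everything else is the routine observation that equivariant maps into the trivial module are constant on orbits, together with the fact that indicator functions of the points of a finite set form a basis of the function space on that set.
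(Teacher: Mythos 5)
Your argument is correct and is essentially the proof the paper intends: the paper simply asserts that the $S_n$-orbits of the basis of $V^{\otimes k}$ are exactly the $O_D$ for $D:k\to 0$ with at most $n$ parts, and deduces the proposition from the fact that indicator functions of orbits form a basis of the $S_n$-invariant functionals. You have just written out the details (in particular the extension of the value-matching bijection to a permutation of $\{1,\dots,n\}$, which is where the ``at most $n$ parts'' hypothesis enters) that the paper leaves as ``easy to see.''
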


\subsection{The functor $\Phi$}\label{subsection: the functor Phi}

There is a monoidal functor $\Phi:\P(n)\to\Rep(S_n)$ defined on generators by setting $\Phi(1)=V$ and 
\[\begin{array}{rclcrclcrcl}
    \Phi(\mult): V\otimes V & \to & V & & v_i\otimes v_k & \mapsto & \delta_{i,k} v_i\\
    \Phi(\un): \unit & \to & V & & 1 & \mapsto & \sum_{i=1}^n v_i\\
    \Phi(\comult): V & \to & V\otimes V & & v_i & \mapsto & v_i \otimes v_i  \\
    \Phi(\coun): V & \to & \unit & & v_i & \mapsto & 1 \\
    \Phi(\symm): V\otimes V & \to & V\otimes V & & u\otimes w & \mapsto & w\otimes u
\end{array}\]
Indeed, it is easy to check that the linear maps above satisfy the corresponding relations in Theorem \ref{g and r for P} and commute with the action of $S_n$. 

Alternatively, $\Phi$ can be described as follows: given a partition diagram $D:k\to\ell$, the $\k$-linear map $\Phi(D):V^{\otimes k}\to V^{\otimes\ell}$ sends $v_{\bd i}$ to the sum of all $v_{\bd i'}$ such that the labeling of the bottom and top vertices of $D$ by the entries of ${\bd i}$ and ${\bd i'}$ respectively has the property the labels of any two vertices in the same part are equal. For example, the vertices of the following partition diagram are labeled by arbitrary tuples ${\bd i}$ and ${\bd i'}$:
\[
    \begin{tikzpicture}[baseline = 6pt, scale=0.75]
        \draw \foreach \m in {1,...,5} {
        (\m,0) node[circle, draw, fill=black]{}
        };
        \draw \foreach \m in {2,...,4} {
        (\m,1) node[circle, draw, fill=black]{}
        };
        \draw[-,thick] (1,0) to[out=up, in=up] (2,0) to[out=up, in=down] (3,1);
        \draw[-,thick] (3,0) to[out=up, in=up] (5,0);
        \draw[-,thick] (4,0) to (2,1);
        \draw \foreach \m in {1,...,5} {
        (\m,-0.4) node{$i_\m$}
        };
        \draw \foreach \m in {1,...,3} {
        (\m+1,1.4) node{$i'_\m$}
        };
    \end{tikzpicture}
\]
The corresponding map $V^{\otimes 5}\to V^{\otimes 3}$ sends $v_{\bd i}\mapsto\delta_{i_1,i_2}\delta_{i_3,i_5}\sum\limits_{1\leq m\leq n}v_{i_4}\otimes v_{i_1}\otimes v_m$.
In particular, given a partition diagram $D:k\to 0$ we have 
\begin{equation}\label{Phi as sum of f}
\Phi(D)=\sum_{D'\geq D}f_{D'}.
\end{equation}
Motivated by the equation above, we recursively define a new basis $\{x_D\}_{D:k\to\ell}$ for $\Hom_{\P(n)}(k,\ell)$ by setting 
\begin{equation}
x_D=D-\sum_{D'\gneqq D}x_{D'}.
\end{equation}
For example, if 
\[
    D=~
    \begin{tikzpicture}[baseline = 8pt, scale=0.75]
        \draw \foreach \m in {1,2} {
        (\m,0) node[circle, draw, fill=black]{}
        (\m,1) node[circle, draw, fill=black]{}
        };
        \draw[-,thick] (1,0) to[out=up, in=down] (2,1);
    \end{tikzpicture}
    \quad\text{then}\quad
    x_D=~
    \begin{tikzpicture}[baseline = 8pt, scale=0.75]
        \draw \foreach \m in {1,2} {
        (\m,0) node[circle, draw, fill=black]{}
        (\m,1) node[circle, draw, fill=black]{}
        };
        \draw[-,thick] (1,0) to[out=up, in=down] (2,1);
    \end{tikzpicture}
    ~-~
    \begin{tikzpicture}[baseline = 8pt, scale=0.75]
        \draw \foreach \m in {1,2} {
        (\m,0) node[circle, draw, fill=black]{}
        (\m,1) node[circle, draw, fill=black]{}
        };
        \draw[-,thick] (1,0) to[out=up, in=down] (2,1);
        \draw[-,thick] (2,0) to[out=up, in=down] (1,1);
    \end{tikzpicture}
    ~-~
    \begin{tikzpicture}[baseline = 8pt, scale=0.75]
        \draw \foreach \m in {1,2} {
        (\m,0) node[circle, draw, fill=black]{}
        (\m,1) node[circle, draw, fill=black]{}
        };
        \draw[-,thick] (1,0) to[out=up, in=down] (2,1);
        \draw[-,thick] (2,0) to[out=up, in=down] (2,1);
    \end{tikzpicture}
    ~-~
    \begin{tikzpicture}[baseline = 8pt, scale=0.75]
        \draw \foreach \m in {1,2} {
        (\m,0) node[circle, draw, fill=black]{}
        (\m,1) node[circle, draw, fill=black]{}
        };
        \draw[-,thick] (1,0) to[out=up, in=down] (2,1);
        \draw[-,thick] (1,0) to[out=up, in=down] (1,1);
    \end{tikzpicture}
    ~+2~
    \begin{tikzpicture}[baseline = 8pt, scale=0.75]
        \draw \foreach \m in {1,2} {
        (\m,0) node[circle, draw, fill=black]{}
        (\m,1) node[circle, draw, fill=black]{}
        };
        \draw[-,thick] (1,0) to[out=up, in=down] (2,1);
        \draw[-,thick] (2,0) to (2,1);
        \draw[-,thick] (1,1) to (1,0);
    \end{tikzpicture}.
\]
Now given any $D:k\to0$, an easy induction argument using (\ref{Phi as sum of f}) shows 
\begin{equation}\label{x to f}
    \Phi(x_D)=f_D.
\end{equation}
In particular, $\Phi(x_D)=0$ whenever $D:k\to0$ has more than $n$ parts. 

\begin{theorem}\label{thm: Phi is full and kernel}
    (Compare with \cite[Theorem 2.6]{CO11})

    (1) The functor $\Phi:\P(n)\to\Rep(S_n)$ is full.

    (2) The kernel of the map 
    \begin{equation}\label{Phi on homs}
    \Phi:\Hom_{\P(n)}(k,\ell)\to\Hom_{S_n}(V^{\otimes k},V^{\otimes \ell})
    \end{equation}
    is the span of all $x_D$ with $D:k\to \ell$ having more than $n$ parts. In particular, (\ref{Phi on homs}) is an isomorphism if and only if $k+\ell\leq n$. 
\end{theorem}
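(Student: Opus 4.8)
The plan is to reduce both assertions, via the self-duality of the generating object, to the case $\ell=0$, where they drop out of Proposition~\ref{prop: f basis} together with equation~(\ref{x to f}).

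First I would record that $1$ is self-dual in $\P(n)$: the morphisms $\coun\circ\mult\colon 2\to 0$ and $\comult\circ\un\colon 0\to 2$ satisfy the two zig-zag identities by the Frobenius, unit, and counit relations of Theorem~\ref{g and r for P}. Dually, $V$ is self-dual in $\Rep(S_n)$ under the standard form $v_i\otimes v_j\mapsto\delta_{i,j}$, and from the definition of $\Phi$ on generators one checks that $\Phi$ carries the cap $\coun\circ\mult$ and the cup $\comult\circ\un$ of $\P(n)$ to the evaluation and coevaluation of $V$. Hence for all $k,\ell$ the strand-bending isomorphism $\Hom_{\P(n)}(k,\ell)\xrightarrow{\ \sim\ }\Hom_{\P(n)}(k+\ell,0)$ and its counterpart $\Hom_{S_n}(V^{\otimes k},V^{\otimes\ell})\xrightarrow{\ \sim\ }\Hom_{S_n}(V^{\otimes(k+\ell)},\unit)$ form a commutative square with the two instances of $\Phi$. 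On partition diagrams the bending map is the bijection $D\mapsto\widehat D$ that relabels the $\ell$ top vertices as the last $\ell$ bottom vertices; it creates no closed loops (so no powers of $n$ appear), preserves the coarsening order, and preserves the number of parts, so the recursion $x_D=D-\sum_{D'\gneqq D}x_{D'}$ shows it sends $x_D$ to $x_{\widehat D}$. It therefore suffices to prove (1) and (2) when $\ell=0$.

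For $\ell=0$ part (1) is immediate: by (\ref{x to f}) the image of $\Phi$ contains $f_D=\Phi(x_D)$ for every $D\colon k\to 0$ with at most $n$ parts, and these $f_D$ form a basis of $\Hom_{S_n}(V^{\otimes k},\unit)$ by Proposition~\ref{prop: f basis}, so $\Phi$ is surjective; fullness for general $\ell$ then follows from the square. For part (2), write $y\in\Hom_{\P(n)}(k,0)$ in the basis $\{x_D\}$ as $y=\sum_D c_D x_D$; then $\Phi(y)=\sum_D c_D\,\Phi(x_D)$, where $\Phi(x_D)=0$ for $D$ with more than $n$ parts while $\{\Phi(x_D)=f_D : D\text{ has}\le n\text{ parts}\}$ is linearly independent. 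Thus $\Phi(y)=0$ forces $c_D=0$ for all $D$ with at most $n$ parts, i.e.\ $y$ lies in the span of the $x_D$ with more than $n$ parts, the reverse inclusion being (\ref{x to f}). Transporting this back through the commutative square gives both parts for arbitrary $\ell$. Finally, any $D\colon k\to\ell$ has at most $k+\ell$ parts, with equality for the edgeless diagram, so there is no such $D$ with more than $n$ parts exactly when $k+\ell\le n$; since (\ref{Phi on homs}) is always surjective by part (1), it is an isomorphism precisely in that range.

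The only delicate point is the bookkeeping in the duality reduction: checking that $\Phi$ genuinely respects the cups and caps, that bending introduces no spurious factors of $n$, and that the triangular change of basis $D\leftrightarrow x_D$ is compatible with the bending bijection on diagrams. Once that is set up, everything rests directly on Proposition~\ref{prop: f basis} and identity~(\ref{x to f}), so I do not expect any further obstacle.
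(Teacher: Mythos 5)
Your proposal is correct and follows essentially the same route as the paper: reduce to $\ell=0$ via the strand-bending isomorphisms (which the paper packages as the commutative square (\ref{Reduce to invariants symm}), justified by monoidality of $\Phi$, and which you justify by exhibiting the self-duality data explicitly), note that bending preserves the number of parts and the coarsening order so that $x_D\mapsto x_{\widehat D}$, and then conclude from Proposition \ref{prop: f basis} together with (\ref{x to f}). The only difference is expository: you spell out the zig-zag identities and the linear-independence argument in the $\{x_D\}$ basis, which the paper leaves implicit.
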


\begin{proof}
    First we reduce the argument to studying (\ref{Phi on homs}) in the case $\ell=0$.  
    Toward that end, notice there is a $\k$-linear isomorphism $\Hom_{\P(n)}(k,\ell)\to \Hom_{\P(n)}(k+\ell,0)$ given on diagrams by 
    \begin{equation}\label{cap it}
        \begin{tikzpicture}[baseline = 12pt]
            \draw \foreach \m in {0,1} {
                (\m,0) node[circle, draw, fill=black,inner sep=0pt, minimum width=4pt]{}
                (\m,1.5) node[circle, draw, fill=black,inner sep=0pt, minimum width=4pt]{}
                };
            \draw[-,thick] (0,0) to (0,0.2);
            \draw (0.5,0) node{$\cdots$};
            \draw[-,thick] (1,0) to (1,0.2);
            \draw[-,thick] (0,1.5) to (0,0.8);
            \draw (0.5,1.25) node{$\cdots$};
            \draw[-,thick] (1,1.5) to (1,0.8);
            \draw[-] (-0.2,0.2) to (1.2,0.2) to (1.2,0.8) to (-0.2,0.8) to (-0.2,0.2);
            \draw (0.5,0.5) node{?};
        \end{tikzpicture}
        ~\mapsto~
        \begin{tikzpicture}[baseline = 12pt]
            \draw \foreach \m in {0,1,1.5,2.5} {
                (\m,0) node[circle, draw, fill=black,inner sep=0pt, minimum width=4pt]{}
                };
            \draw[-,thick] (0,0) to (0,0.2);
            \draw (0.5,0) node{$\cdots$};
            \draw[-,thick] (1,0) to (1,0.2);
            \draw[-,thick] (0,0.8) to[out=up,in=up] (2.5,0.8) to[out=down,in=up] (2.5,0);
            \draw (2,0) node{$\cdots$};
            \draw[-,thick] (1,0.8) to[out=up,in=up] (1.5,0.8) to[out=down,in=up] (1.5,0);
            \draw[-] (-0.2,0.2) to (1.2,0.2) to (1.2,0.8) to (-0.2,0.8) to (-0.2,0.2);
            \draw (0.5,0.5) node{?};
        \end{tikzpicture}    
    \end{equation}
    with inverse prescribed by 
    \begin{equation}\label{uncap it}
        \begin{tikzpicture}[baseline = 12pt]
            \draw \foreach \m in {0,1,1.5,2.5} {
                (\m,-0.5) node[circle, draw, fill=black,inner sep=0pt, minimum width=4pt]{}
                };
            \draw[-,thick] (0,-0.5) to (0,0.2);
            \draw (0.5,-0.25) node{$\cdots$};
            \draw[-,thick] (1,-0.5) to (1,0.2);
            \draw[-,thick] (1.5,-0.5) to (1.5,0.2);
            \draw (2,-0.25) node{$\cdots$};
            \draw[-,thick] (2.5,-0.5) to (2.5,0.2);
            \draw[-] (-0.2,0.2) to (2.7,0.2) to (2.7,0.8) to (-0.2,0.8) to (-0.2,0.2);
            \draw (1.25,0.5) node{?};
        \end{tikzpicture}
        ~\mapsto~
        \begin{tikzpicture}[baseline = 12pt]
            \draw \foreach \m in {0,1} {
                (\m,-0.5) node[circle, draw, fill=black,inner sep=0pt, minimum width=4pt]{}
                (\m+3,1) node[circle, draw, fill=black,inner sep=0pt, minimum width=4pt]{}
                };
                \draw[-,thick] (0,-0.5) to (0,0.2);
                \draw (0.5,-0.25) node{$\cdots$};
                \draw[-,thick] (1,-0.5) to (1,0.2);
                \draw (3.5,1) node{$\cdots$};
                \draw[-,thick] (2.5,0.2) to[out=down,in=down] (3,0.2) to[out=up,in=down] (3,1);
            \draw[-,thick] (1.5,0.2) to[out=down,in=down] (4,0.2) to[out=up,in=down] (4,1);
                \draw[-] (-0.2,0.2) to (2.7,0.2) to (2.7,0.8) to (-0.2,0.8) to (-0.2,0.2);
                \draw (1.25,0.5) node{?};
        \end{tikzpicture}    
    \end{equation}
    Note that this isomorphism preserves the number of parts in a diagram as well as the partial order $\leq$ on partition diagrams. Hence, for each partition diagram $D:k\to \ell$ the isomorphism maps $x_D\mapsto x_{D'}$ for some partition diagram $D':k+\ell\to0$ with the same number of parts as $D$.

    Similarly, there is a $\k$-linear isomorphism $\Hom_{S_n}(V^{\otimes k}, V^{\otimes \ell})\to\Hom_{S_n}(V^{\otimes k+\ell}, \unit)$ given by 
    $$g\mapsto \Phi(\Cap_\ell)\circ(g\otimes 1_{V^{\otimes\ell}})$$
    with inverse 
    $$g\mapsto (g\otimes 1_{V^{\otimes\ell}})\circ(1_{V^{\otimes k}}\otimes \Phi(\Cup_\ell))$$
    where 
    \[
        \Cap_\ell=~
        \begin{tikzpicture}[baseline = 8pt]
            \draw \foreach \m in {0,1,2,3} {
                (\m,0) node[circle, draw, fill=black,inner sep=0pt, minimum width=4pt]{}
                };
            \draw (0.5,0) node{$\cdots$};
            \draw[-,thick] (0,0) to[out=up,in=left] (1.5,1) to[out=right,in=up] (3,0);
            \draw (2.5,0) node{$\cdots$};
            \draw[-,thick] (1,0) to[out=up,in=left] (1.5,0.75) to[out=right,in=up] (2,0);
            \draw (0,-0.3) node{1};
            \draw (1,-0.3) node{$\ell$};
            \draw (2,-0.3) node{$\ell+1$};
            \draw (3,-0.3) node{$2\ell$};
        \end{tikzpicture}    
        \qquad
        \Cup_\ell=~
        \begin{tikzpicture}[baseline = 16pt]
            \draw \foreach \m in {0,1,2,3} {
                (\m,1) node[circle, draw, fill=black,inner sep=0pt, minimum width=4pt]{}
                };
            \draw (0.5,1) node{$\cdots$};
            \draw[-,thick] (0,1) to[out=down,in=left] (1.5,0) to[out=right,in=down] (3,1);
            \draw (2.5,1) node{$\cdots$};
            \draw[-,thick] (1,1) to[out=down,in=left] (1.5,0.25) to[out=right,in=down] (2,1);
            \draw (0,1.3) node{1};
            \draw (1,1.3) node{$\ell$};
            \draw (2,1.3) node{$\ell+1$};
            \draw (3,1.3) node{$2\ell$};
        \end{tikzpicture}    
    \]
    Since $\Phi$ is a monoidal functor the following diagram commutes:
    \begin{equation}\label{Reduce to invariants symm}
        \begin{CD}
        \Hom_{\P(n)}(k,\ell)&@>>>&
        \Hom_{\P(n)}(k+\ell,0)\\
        @V \Phi VV&
        &@V \Phi VV\\
        \Hom_{S_n}(V^{\otimes k}, V^{\otimes \ell})&@>>>&
        \Hom_{S_n}(V^{\otimes k+\ell}, \unit).
        \end{CD}
    \end{equation}
    Now, the vertical map on the right is surjective and its kernel is the span of all $x_D$ with $D:k+\ell\to 0$ having more than $n$ parts thanks to Proposition \ref{prop: f basis} and (\ref{x to f}). The result follows.
\end{proof}

\section{The jellyfish partition category}\label{section: JP}
In this section we define the jellyfish partition category $\JP(n)$ in terms of generators and relations, and develop a diagram calculus for $\JP(n)$ in terms of jellyfish partition diagrams. First, however, we point out properties of the determinant map which serve as motivation for the definition of $\JP(n)$.

\subsection{Motivation: the determinant map}\label{subsection: Motivation: the determinant map} The determinant can be viewed as a linear map  
\begin{equation*}
\det:V^{\otimes n}\to\k,\quad v_{i_1}\otimes\cdots\otimes v_{i_n}\mapsto \det[v_{i_1}~\cdots~v_{i_n}].
\end{equation*}
An easy, but crucial observation is that for any $\sigma\in S_n$ we have 
\begin{equation}\label{det property}
\det(\sigma\cdot v_{\bd i})=(-1)^\sigma\det(v_{\bd i}).
\end{equation} 
It follows that $\det\in\Hom_{A_n}(V^{\otimes n},\unit)$ but $\det\not\in\Hom_{S_n}(V^{\otimes n},\unit)$. 
Thus, in order to construct an $A_n$-analog of the partition category we must include an analog of the determinant map into our diagrams. This is precisely what the jellyfish in the forthcoming jellyfish partition diagrams are meant to do. On the other hand, it follows from (\ref{det property}) that $\det\otimes\det\in\Hom_{S_n}(V^{\otimes 2n},\unit)$. Thus, it follows from Theorem \ref{thm: Phi is full and kernel} that $\det\otimes\det$ is in the image of $\Phi$. In order to describe a morphism in $\P(n)$ which is mapped to $\det\otimes\det$ by $\Phi$ we will make use of a few special partition diagrams. First, let 
$\crosscap_n:2n\to 0$ 
 denote the following diagram:
\[
    \crosscap_n=
    \begin{tikzpicture}[baseline = 12pt, scale=0.9]
        \draw (1,0) node[circle, draw, fill=black]{};
        \draw (2,0) node[circle, draw, fill=black]{};
        \draw (3,0) node{$\cdots$};
        \draw (4,0) node[circle, draw, fill=black]{};
        \draw (5,0) node[circle, draw, fill=black]{};
        \draw (6,0) node[circle, draw, fill=black]{};
        \draw (7,0) node{$\cdots$};
        \draw (8,0) node[circle, draw, fill=black]{};
        \draw (1,-0.3) node{1};
        \draw (2,-0.3) node{2};
        \draw (4,-0.3) node{$n$};
        \draw (5,-0.3) node{$n+1$};
        \draw (6,-0.3) node{$n+2$};
        \draw (8,-0.3) node{$2n$};
        \draw[-,thick] (1,0) to[out=up, in=left] (3,1.25) to[out=right, in=up] (5,0);
        \draw[-,thick] (2,0) to[out=up, in=left] (4,1.25) to[out=right, in=up] (6,0);
        \draw[-,thick] (4,0) to[out=up, in=left] (6,1.25) to[out=right, in=up] (8,0);
    \end{tikzpicture}
\]
Also, whenever $1\leq i<k$ we let $\symm_i^{(k)}:k\to k$ denote the following:
\[
    \symm_i^{(k)}=
    \begin{tikzpicture}[baseline = 12pt, scale=0.8]
        \draw \foreach \m in {1,3,6,8} {
            (\m,0) node[circle, draw, fill=black,inner sep=0pt, minimum width=4pt]{}
            (\m,1) node[circle, draw, fill=black,inner sep=0pt, minimum width=4pt]{}
            (\m,0) to[-,thick] (\m,1)
            };  
        \draw (4,0)node[circle, draw, fill=black,inner sep=0pt, minimum width=4pt]{};
        \draw (4,1)node[circle, draw, fill=black,inner sep=0pt, minimum width=4pt]{};
        \draw (5,0)node[circle, draw, fill=black,inner sep=0pt, minimum width=4pt]{};
        \draw (5,1)node[circle, draw, fill=black,inner sep=0pt, minimum width=4pt]{};
        \draw[-,thick] (4,0) to (5,1);
        \draw[-,thick] (4,1) to (5,0);
        \draw (2,0.5) node{$\cdots$};
        \draw (7,0.5) node{$\cdots$};
        \draw (1,-0.3) node{1};
        \draw (4,-0.3) node{$i$};
        \draw (5,-0.3) node{$i+1$};
        \draw (8,-0.3) node{$k$};
    \end{tikzpicture}
\]
We simply write $\symm_i=\symm_i^{(k)}$ if $k$ is understood from the context. Note that for each $k\in\Z_{>0}$ the elements $\symm_1,\ldots,\symm_{k-1}$ generate a copy of the symmetric group $S_k\subseteq P_k(n)$. We refer to the corresponding partition diagrams as \emph{permutation diagrams}.

The following proposition serves as the motivation relations (\ref{jellyfish symmetry})--(\ref{jellyfish relation}) in the forthcoming definition of the jellyfish partition category. It also shows that the role of $\det\otimes\det$ in $\P(n)$ is played by the following:
\[
    \sum_{\sigma\in S_n}(-1)^\sigma~
    \begin{tikzpicture}[baseline = 12pt]
        \draw[-,thick]  \foreach \m in {1,2,2.5, 3.5} {
            (\m,0) node[circle, draw, fill=black,inner sep=0pt, minimum width=4pt]{}
            (\m, 0) to (\m, 0.25)
            };
        \draw[-,thick] (0.9,0.25) to (2.1,0.25) to (2.1,0.75) to (0.9,0.75) to (0.9,0.25);      
        \draw[-,thick] (1,0.75) to[out=up, in=left] (1.75,1.25) to[out=right, in=up] (2.5,0.75) to (2.5,0.25);
        \draw[-,thick] (2,0.75) to[out=up, in=left] (2.75,1.25) to[out=right, in=up] (3.5,0.75) to (3.5,0.25);
        \draw (1.5,.1) node{$\cdots$};
        \draw (1.5,.5) node{$\sigma$};
        \draw (3,.5) node{$\cdots$};  
    \end{tikzpicture}
\]

\begin{proposition}\label{relations for det} 
(1) $\det\otimes 1_V=(1_V\otimes \det)\circ \Phi(s_1\circ\cdots\circ s_n)$.

(2) $\det\circ \Phi(s_i)=-\det$ for all $1\leq i<n$.

(3) $\det\otimes \det = \Phi\left(\sum_{\sigma\in S_n}(-1)^\sigma \crosscap_n\circ(\sigma\otimes 1_n)\right)$.
\end{proposition}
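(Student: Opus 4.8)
\textbf{Proof plan for Proposition \ref{relations for det}.}

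\medskip

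The plan is to verify each of the three identities by evaluating both sides on the standard basis $\{v_{\bd i}\}$ of the relevant tensor power and comparing coefficients, using the explicit description of $\Phi$ on permutation diagrams and on $\crosscap_n$ together with the key transformation law (\ref{det property}).

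For part (2): the permutation diagram $s_i$ acts on $V^{\otimes n}$ exactly as the transposition $(i,i+1)\in S_n$, since $\Phi(\symm)$ is the flip and $\Phi$ is monoidal, so $\Phi(s_i)(v_{\bd i}) = (i,i+1)\cdot v_{\bd i}$. Hence $\det\circ\Phi(s_i)(v_{\bd i}) = \det((i,i+1)\cdot v_{\bd i}) = (-1)^{(i,i+1)}\det(v_{\bd i}) = -\det(v_{\bd i})$ by (\ref{det property}), which is the claim. Part (1) is the same idea: $\Phi(s_1\circ\cdots\circ s_n)$ is the image under $\Phi$ of the permutation diagram for the $n$-cycle $\tau = (1\,2\,\cdots\,n+1)$ acting on $V^{\otimes(n+1)}$ — one checks $s_1\circ\cdots\circ s_n$ as a permutation diagram is precisely this cycle — so that both sides, applied to $v_{i_0}\otimes v_{\bd i}$ with $v_{\bd i}\in V^{\otimes n}$, reduce to comparing $v_{i_0}\otimes\det(v_{\bd i})$ with $(1_V\otimes\det)(\tau\cdot(v_{i_0}\otimes v_{\bd i}))$; unwinding $\tau$ gives $v_{i_0}\otimes\det(v_{i_1}\otimes\cdots\otimes v_{i_n})$ after reordering, and one must track the sign, which here is trivial because applying the cycle simply rotates $v_{i_0}$ into the last slot without permuting the remaining entries among the arguments of $\det$. (A clean alternative is to note both sides are $A_n$-equivariant maps into a $1$-dimensional-per-fixed-$i_0$ space and agree on one basis vector.)

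For part (3): by the recipe for $\Phi$ on partition diagrams, $\Phi(\crosscap_n)(v_{\bd j})$ for $v_{\bd j}\in V^{\otimes 2n}$ equals $1$ if $j_\ell = j_{\ell+n}$ for all $1\le\ell\le n$ and $0$ otherwise; composing with $\Phi(\sigma\otimes 1_n)$ permutes the first $n$ tensor factors by $\sigma$. So $\Phi\big(\crosscap_n\circ(\sigma\otimes 1_n)\big)(v_{\bd i}\otimes v_{\bd k}) = \prod_{\ell=1}^n \delta_{i_{\sigma^{-1}(\ell)},\,k_\ell}$, which is $1$ exactly when $\bd k$ is the $\sigma$-reordering of $\bd i$ (in particular this forces all $i_\ell$ distinct for a nonzero term). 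Summing against $(-1)^\sigma$ and comparing with $(\det\otimes\det)(v_{\bd i}\otimes v_{\bd k}) = \det[v_{i_1}\cdots v_{i_n}]\det[v_{k_1}\cdots v_{k_n}]$: if $\bd i$ has a repeated entry both sides vanish (the left because $\det[v_{i_1}\cdots]=0$, the right because no $\sigma$ makes the product nonzero); if $\bd i$ is a permutation of $(1,\dots,n)$, then $\det[v_{i_1}\cdots v_{i_n}] = (-1)^{\rho}$ where $\rho$ is the permutation sending $(1,\dots,n)$ to $\bd i$, and the sum over $\sigma$ collapses to the single term where $\bd k$ is the appropriate reordering of $\bd i$, with sign matching $\det[v_{k_1}\cdots v_{k_n}]$ up to the shared factor; a short sign bookkeeping (using $(-1)^\sigma(-1)^\rho = (-1)^{\rho\sigma}$ and that $\bd k = \sigma\cdot\bd i$ corresponds to the permutation $\rho\sigma^{-1}$ or similar) shows the coefficients agree. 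If $\bd i$ is not a permutation of $(1,\dots,n)$ but has distinct entries, both sides vanish since $v_{i_1},\dots,v_{i_n}$ still span a proper coordinate subspace only when entries repeat — so in fact distinctness already forces $\{i_1,\dots,i_n\}=\{1,\dots,n\}$, and this case does not arise separately.

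\medskip

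The main obstacle is the sign bookkeeping in part (3): one must be careful about the distinction between the permutation $\sigma$ acting on tensor-factor positions and the permutation relating the multi-index $\bd i$ to the sorted tuple, and about left-versus-right action conventions, to confirm that $\sum_\sigma (-1)^\sigma \prod_\ell \delta_{i_{\sigma^{-1}(\ell)},k_\ell}$ really reproduces the product $\det[v_{i_1}\cdots v_{i_n}]\det[v_{k_1}\cdots v_{k_n}]$ rather than it up to an overall sign. Everything else is a direct computation from the definitions.
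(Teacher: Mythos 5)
Your overall strategy is sound and your computations of $\Phi$ on the relevant diagrams are correct, but the route differs from the paper's, most noticeably in part (3). The paper avoids all sign bookkeeping there: it first proves the diagrammatic identity $\crosscap_n\circ(\sigma\otimes 1_n)=\crosscap_n\circ(1_n\otimes\sigma^{-1})$, concludes that $\Phi\bigl(\sum_\sigma(-1)^\sigma\crosscap_n\circ(\sigma\otimes 1_n)\bigr)$ is skew-symmetric separately in the first and last $n$ tensor factors, observes that $\det\otimes\det$ has the same property, and then notes that (since $\dim V=n$) any such map is determined by its value on $v_1\otimes\cdots\otimes v_n\otimes v_1\otimes\cdots\otimes v_n$, where both sides give $1$. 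Your basis-by-basis computation works too, and the sign check you defer does close: if $\rho,\kappa$ denote the permutations with $\rho(\ell)=i_\ell$, $\kappa(\ell)=k_\ell$, the unique contributing $\sigma$ satisfies $\kappa=\rho\sigma^{-1}$, so $(-1)^\sigma=(-1)^\rho(-1)^\kappa$, matching $\det[v_{i_1}\cdots v_{i_n}]\det[v_{k_1}\cdots v_{k_n}]$ exactly. For parts (1) and (2) both you and the paper use naturality/symmetry of the braiding and skew-symmetry of $\det$; your indexing in (1) (whether $v_{i_0}$ sits in the first or last slot) is garbled, but the underlying claim that $s_1\circ\cdots\circ s_n$ is the cyclic rotation $w_1\otimes\cdots\otimes w_{n+1}\mapsto w_{n+1}\otimes w_1\otimes\cdots\otimes w_n$ is correct and no sign arises.

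One step in your part (3) is genuinely wrong as stated: when $\bd i$ has a repeated entry you claim the right-hand side vanishes ``because no $\sigma$ makes the product nonzero.'' That is false — e.g.\ for $n=2$, $\bd i=\bd k=(1,1)$, both elements of $S_2$ give $\prod_\ell\delta_{i_{\sigma^{-1}(\ell)},k_\ell}=1$. The sum still vanishes, but for a different reason: if $i_a=i_b$ with $a\neq b$, then $\sigma\mapsto\sigma\circ(a\,b)$ is a fixed-point-free involution on the set of contributing $\sigma$'s that reverses sign, so the terms cancel in pairs (this is where $\operatorname{char}\k\neq 2$ matters). The conclusion is right and the repair is easy, but the justification needs to be replaced; the paper's skew-symmetry argument sidesteps this case entirely.
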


\begin{proof}
Part (1) follows from the fact that $\Phi(s)$ induces the usual symmetric braiding on the category of finite dimensional vector spaces. Part (2) is true because $\det$ is a skew symmetric form. For part (3), first notice $\crosscap_n\circ(\sigma\otimes 1_n)=\crosscap_n\circ(1_n\otimes\sigma^{-1})$ for any permutation diagram $\sigma:n\to n$, as illustrated below:
\[
    \begin{tikzpicture}[baseline = 12pt]
        \draw[-,thick]  \foreach \m in {1,2,2.5, 3.5} {
            (\m,0) node[circle, draw, fill=black,inner sep=0pt, minimum width=4pt]{}
            (\m, 0) to (\m, 0.25)
            };
        \draw[-,thick] (0.9,0.25) to (2.1,0.25) to (2.1,0.75) to (0.9,0.75) to (0.9,0.25);      
        \draw[-,thick] (1,0.75) to[out=up, in=left] (1.75,1.25) to[out=right, in=up] (2.5,0.75) to (2.5,0.25);
        \draw[-,thick] (2,0.75) to[out=up, in=left] (2.75,1.25) to[out=right, in=up] (3.5,0.75) to (3.5,0.25);
        \draw (1.5,.1) node{$\cdots$};
        \draw (1.5,.5) node{$\sigma$};
        \draw (3,.5) node{$\cdots$};  
    \end{tikzpicture}
    =
    \begin{tikzpicture}[baseline = 12pt]
        \draw[-,thick]  \foreach \m in {1,2,2.5, 3.5} {
            (\m,0) node[circle, draw, fill=black,inner sep=0pt, minimum width=4pt]{}
            (\m, 0) to (\m, 0.25)
            };
        \draw[-,thick] (2.4,0.25) to (3.6,0.25) to (3.6,0.75) to (2.4,0.75) to (2.4,0.25);      
        \draw[-,thick] (1,0.25) to (1,0.75) to[out=up, in=left] (1.75,1.25) to[out=right, in=up] (2.5,0.75);
        \draw[-,thick] (2,0.25) to (2,0.75) to[out=up, in=left] (2.75,1.25) to[out=right, in=up] (3.5,0.75);
        \draw (3,.1) node{$\cdots$};
        \draw (3.1,.55) node{$\sigma^{-1}$};
        \draw (1.5,.5) node{$\cdots$};  
    \end{tikzpicture}
\]
It follows that $\sum_{\sigma\in S_n}(-1)^\sigma \crosscap_n\circ(\sigma\otimes 1_n)=\sum_{\sigma\in S_n}(-1)^\sigma \crosscap_n\circ(1_n\otimes\sigma)$. In particular, $\Phi\left(\sum_{\sigma\in S_n}(-1)^\sigma \crosscap_n\circ(\sigma\otimes 1_n)\right):V^{\otimes n}\otimes V^{\otimes n}\to\unit$ is skew symmetric on the left and right $n$ tensors respectively. Since the same is true for $\det\otimes \det$, it suffices to observe that both map $v_1\otimes\cdots \otimes v_n\otimes v_1\otimes\cdots\otimes v_n\mapsto 1$. 
\end{proof}

\subsection{Definition of $\JP(n)$}\label{definition of JP}

We define the \emph{jellyfish partition category} $\JP(n)$ to be the free $\k$-linear monoidal category generated by a single object $1$ and six morphisms $\mult:2\to 1, \un:0\to 1, \comult:1\to 2, \coun:1\to 0, \symm:2\to 2$, and $\jelly:n\to 0$ subject to relations (\ref{S})--(\ref{last P relation}) and the following:
\begin{align}
\label{jellyfish symmetry}
\jelly\otimes 1_1&=(1_1\otimes \jelly)\circ s_1\circ\cdots\circ s_n,\\
\label{skew form relation}
\jelly\circ s_i&=-\jelly\qquad (1\leq i<n),\\
\label{jellyfish relation}
\jelly\otimes \jelly&=\sum_{\sigma\in S_n}(-1)^\sigma \crosscap_n\circ(\sigma\otimes 1_n).
 \end{align}
 The \emph{jellyfish partition algebra} is defined to be $JP_k(n)=\End_{\JP(n)}(k)$.

\begin{remark}
More briefly, $\JP(n)$ is the free $\k$-linear symmetric monoidal category generated by an $n$-dimensional special commutative Frobenius algebra which admits a skew-symmetric $n$-form $j$ satisfying the \emph{jellyfish relation} (\ref{jellyfish relation}).
\end{remark}

By Theorem \ref{g and r for P} there is a monoidal functor $\P(n)\to\JP(n)$ mapping the generators of $\P(n)$ to those of the same name in $\JP(n)$.
Let $\Psi:\JP(n)\to\Rep(A_n)$ denote the monoidal functor which agrees with $\Phi$ on all generators of the same name and with $\Psi(\jelly)=\det$. It follows from Theorem \ref{g and r for P} and Proposition \ref{relations for det} that $\Psi$ is well-defined. Moreover, we get that (\ref{square of functors}) is a commutative diagram of monoidal functors. We will have more to say about the functor $\Psi$ in \S\ref{Psi} and \S\ref{section: Psi is faithful}. First we will develop a diagrammatic description of $\JP(n)$.

\subsection{Jellyfish diagrams}\label{JP diagrams}

By Theorem \ref{g and r for P} we have a functor $\P(n)\to\JP(n)$ mapping the generators of $\P(n)$ to those of the same name in $\JP(n)$. In particular, we can interpret any partition diagram as a morphism in $\JP(n)$. We extend this diagrammatic description to all of $\JP(n)$ by drawing $\jelly:n\to 0$ as a jellyfish with $n$ legs. For instance, in $\JB(5)$ we have:
\[
    \jelly=~
    \begin{tikzpicture}[baseline = 20pt, scale=0.75]
        \draw[-,thick]  \foreach \m in {-0.3,0.2,0.7,1.2,1.7} {
            (\m,0.25) node[circle, draw, fill=black]{}};
        \filldraw[fill=\jellycolor,-,thick] (0,1) to (1.4,1) to[out=up, in=right] (0.7,1.7) to[out=left, in=up] (0,1);
        \draw[-,thick] (-0.3,0.25) to[out=up,in=down] (0.1,1);
        \draw[-,thick] (0.2,0.25) to[out=up,in=down] (0.4,1);
        \draw[-,thick] (0.7,0.25) to[out=up,in=down] (0.7,1);
        \draw[-,thick] (1.2,0.25) to[out=up,in=down] (1,1);
        \draw[-,thick] (1.7,0.25) to[out=up,in=down] (1.3,1);
    \end{tikzpicture}
\]
More generally, a \emph{jellyfish (partition) diagram} refers to any diagram obtained by stacking (horizontally and vertically) any finite number of partition diagrams and $n$-legged jellyfish. Note that all jellyfish in a jellyfish diagram are required to have the same number of legs. 
For example, here is a jellyfish diagram when $n=2$:
\begin{equation*}\label{small jellyfish diagram}
    \begin{tikzpicture}[baseline = 24pt]
    \draw \foreach \m in {0.25,1,1.75} {
        (\m,0.25) node[circle, draw, fill=black,inner sep=0pt, minimum width=4pt]{}
        };
        \draw \foreach \m in {1,...,1} {
        (\m,2) node[circle, draw, fill=black,inner sep=0pt, minimum width=4pt]{}
        };
        \filldraw[fill=\jellycolor,-,thick] (0.5,1) to (1.5,1) to[out=up, in=right] (1,1.5) to[out=left, in=up] (0.5,1);
        \draw[-,thick] (0.25,0.25) to[out=up,in=up] (1.75,0.25);
        \draw[-,thick] (1,0.25) to[out=up,in=down] (0.75,1);
        \draw[-,thick] (1.25,1) to[out=down,in=down] (1.75,1) to[out=up,in=down] (1,2);
    \end{tikzpicture}
\end{equation*}
Every jellyfish diagram should be interpreted as a morphism in $\JP(n)$. For example, the diagram above should be interpreted as the following morphism in $\Hom_{\JP(2)}(3,1)$:
\[(\jelly\otimes 1_1)\circ(1_1\otimes(\comult\circ\coun\circ\un\circ\mult))\circ(\symm\otimes 1_1).\] 
Here is a more complicated jellyfish diagram, which corresponds to a morphism in $\Hom_{\JP(3)}(9,5)$:
\[\begin{tikzpicture}[baseline = 24pt]
    \draw \foreach \m in {0,...,8} {
        (\m,0) node[circle, draw, fill=black,inner sep=0pt, minimum width=4pt]{}
        };
        \draw \foreach \m in {2,...,6} {
        (\m,2) node[circle, draw, fill=black,inner sep=0pt, minimum width=4pt]{}
        };
        \filldraw[fill=\jellycolor,-,thick] (0.5,1.2) to (1.5,1.2) to[out=up, in=right] (1,1.7) to[out=left, in=up] (0.5,1.2);
        \filldraw[fill=\jellycolor,-,thick] (2.5,1) to (3.5,1) to[out=up, in=right] (3,1.5) to[out=left, in=up] (2.5,1);
        \filldraw[fill=\jellycolor,-,thick] (5,1) to (6,1) to[out=up, in=right] (5.5,1.5) to[out=left, in=up] (5,1);
        \draw[-,thick] (2,2) to[out=down,in=up] (1,0);
        \draw[-,thick] (0.7,1.2) to[out=down,in=up] (0,0);
        \draw[-,thick] (1.3,1.2) to[out=down,in=up] (0,0);
        \draw[-,thick] (1,1.2) to[out=down,in=down] (3,1);
        \draw[-,thick] (2.7,1) to[out=down,in=down] (2,1) to[out=up,in=down] (3,2);
        \draw[-,thick] (1,0) to[out=35,in=145] (3,0);
        \draw[-,thick] (6,0) to[out=up,in=up] (8,0);
        \draw[-,thick] (5,0) to[out=up,in=up] (6,0);
        \draw[-,thick] (3.3,1) to[out=down,in=up] (4,0)to [out=up,in=down] (5.2,1);
        \draw[-,thick] (4,2) to[out=down,in=up] (4,1.4) to[out=down,in=up] (7,0);
        \draw[-,thick] (5.8,1) to[out=down,in=down] (6.5,1) to[out=up,in=down] (6,2);
        \draw[-,thick] (4,0) to[out=up,in=down] (5,2);
         \draw[-,thick] (5.5,1) to[out=down,in=up] (5,0);
\end{tikzpicture}\] 

In the examples of jellyfish diagrams given above we have omitted any ``middle vertices" that arise when stacking partition diagrams and jellyfish so that we are left with vertices in only the top and bottom rows. Moreover, all jellyfish legs have ended at either a top or bottom vertex, or at another jellyfish. Now, attaching the end of a jellyfish leg to any vertex in the same component gives rise to the same morphism in $\JP(n)$:
\[
    \begin{tikzpicture}[baseline = 15pt]
        \draw \foreach \m in {0,1} {
            (\m,0) node[circle, draw, fill=black,inner sep=0pt, minimum width=4pt]{}
            };
        \filldraw[fill=\jellycolor,-,thick] (0,0.75) to (1,0.75) to[out=up, in=right] (0.5,1.25) to[out=left, in=up] (0,0.75);
        \draw[-,thick] (0.5,0.75) to[out=down,in=up] (0,0) to[out=up,in=up] (1,0);
    \end{tikzpicture}
    ~=~
    \begin{tikzpicture}[baseline = 15pt]
        \draw \foreach \m in {0,1} {
            (\m,0) node[circle, draw, fill=black,inner sep=0pt, minimum width=4pt]{}
            };
        \filldraw[fill=\jellycolor,-,thick] (0,0.75) to (1,0.75) to[out=up, in=right] (0.5,1.25) to[out=left, in=up] (0,0.75);
        \draw[-,thick] (0.5,0.75) to (0.5,0.4) to[out=down,in=up] (0,0);
        \draw[-,thick] (0.5,0.4) to[out=down,in=up] (1,0);
        \draw (0.5,0.4) node[circle, draw, fill=black]{};
    \end{tikzpicture}
    ~=~
    \begin{tikzpicture}[baseline = 15pt]
        \draw \foreach \m in {0,1} {
            (\m,0) node[circle, draw, fill=black,inner sep=0pt, minimum width=4pt]{}
            };
        \filldraw[fill=\jellycolor,-,thick] (0,0.75) to (1,0.75) to[out=up, in=right] (0.5,1.25) to[out=left, in=up] (0,0.75);
        \draw[-,thick] (0.5,0.75) to[out=down,in=up] (1,0) to[out=up,in=up] (0,0);
    \end{tikzpicture}
\]
Thus, if a jellyfish leg is connected to a vertex $a$, and $a$ connects to another jellyfish or the top or bottom row of vertices, then the diagram can be drawn without vertex $a$.  
In some cases, however, stacking a jellyfish atop a partition diagram will result in a vertex which is connected to a jellyfish leg but not connected to another jellyfish nor any top or bottom vertex. For example, if we stack $\jelly$ atop $(\un\otimes 1_2)$ (which is only allowed if $n=3$) we get
\[
    \begin{tikzpicture}[baseline = 12pt]
        \draw[-,thick]  \foreach \m in {1.5, 2.2} {
            (\m,-1) node[circle, draw, fill=black,inner sep=0pt, minimum width=4pt]{}
            };
        \draw[-,thick]  \foreach \m in {0.8, 1.5, 2.2} {
            (\m,-0.25) node[circle, draw, fill=black,inner sep=0pt, minimum width=4pt]{}
            };
        \filldraw[fill=\jellycolor,-,thick] (1,0.5) to (2,0.5) to[out=up, in=right] (1.5,1) to[out=left, in=up] (1,0.5);
        \draw[-,thick] (1.5,-0.25) to[out=up,in=down] (1.5,0.5);
        \draw[-,thick] (2.2,-0.25) to[out=up,in=down] (1.8,0.5);
        \draw[-,thick] (0.8,-0.25) to[out=up,in=down] (1.2,0.5);
        \draw[-,thick] (2.2,-1) to (2.2,-0.25);
        \draw[-,thick] (1.5,-1) to (1.5,-0.25);
    \end{tikzpicture}
\]
In these cases we will draw the resulting jellyfish diagram with a \emph{dangling leg}. For example,  we draw the diagram above as follows:
\[
    \begin{tikzpicture}[baseline = 12pt]
        \draw[-,thick]  \foreach \m in {1.5, 2.2} {
            (\m,-0.25) node[circle, draw, fill=black,inner sep=0pt, minimum width=4pt]{}};
        \filldraw[fill=\jellycolor,-,thick] (1,0.5) to (2,0.5) to[out=up, in=right] (1.5,1) to[out=left, in=up] (1,0.5);
        \draw[-,thick] (1.5,-0.25) to[out=up,in=down] (1.5,0.5);
        \draw[-,thick] (2.2,-0.25) to[out=up,in=down] (1.8,0.5);
        \draw[-,thick] (1.2, 0.5) to[out=-135,in=135] (1.2,0.25) to[out=-45,in=45] (1.2,0);
    \end{tikzpicture}
\]
Allowing for dangling legs (which can always be interpreted as occurrences of $\eta$) we can realize any morphism in $\JP(n)$ as a $\k$-linear combination of jellyfish diagrams with whose vertices are only in the top and bottom row. Moreover, all jellyfish legs can be drawn so that they are either dangling, connected to a vertex, or connected to another jellyfish. 

\subsection{Some relations on jellyfish diagrams}\label{JP diagram moves}

In this subsection we will describe some of the consequences of relations (\ref{jellyfish symmetry})--(\ref{jellyfish relation}) in terms of jellyfish diagrams. Relation (\ref{jellyfish symmetry}) implies that jellyfish are allowed to freely swim across other strands:
\[
    \begin{tikzpicture}[baseline = 18pt]
        \draw[-,thick]  \foreach \m in {1,1.5,2,2.5} {
            (\m,0) node[circle, draw, fill=black]{}
            };
        \draw (1.25,1.5) node[circle, draw, fill=black]{};
        \filldraw[fill=\jellycolor,-,thick] (1,0.5) to (2,0.5) to[out=up, in=right] (1.5,1) to[out=left, in=up] (1,0.5);
        \draw[-,thick] (1,0) to[out=up,in=down] (1.2,0.5);
        \draw[-,thick] (1.5,0) to[out=up,in=down] (1.5,0.5);
        \draw[-,thick] (2,0) to[out=up,in=down] (1.8,0.5);
        \draw[-,thick] (2.5,0) to[out=up,in=-45] (1.25,1.5);
    \end{tikzpicture}
    ~=~\begin{tikzpicture}[baseline = 18pt]
        \draw[-,thick]  \foreach \m in {1,1.5,2,2.5} {
            (\m,0) node[circle, draw, fill=black]{}
            };
        \draw (1.25,1.5) node[circle, draw, fill=black]{};
        \filldraw[fill=\jellycolor,-,thick] (2,0.85) to (3,0.85) to[out=up, in=right] (2.5,1.35) to[out=left, in=up] (2,0.85);
        \draw[-,thick] (1,0) to[out=up,in=down] (2.2,0.85);
        \draw[-,thick] (1.5,0) to[out=up,in=down] (2.5,0.85);
        \draw[-,thick] (2,0) to[out=up,in=down] (2.8,0.85);
        \draw[-,thick] (2.5,0) to[out=up,in=down] (1.25,1.5);
    \end{tikzpicture}
\] 
In particular, the right and left dual of $\jelly$ are equal:  
\begin{equation}\label{dual jelly}
    \begin{tikzpicture}[baseline = -10pt]
        \draw[-,thick] (0,1) node[circle, draw, fill=black]{};
        \draw[-,thick] (0.5,1) node[circle, draw, fill=black]{};
        \draw[-,thick] (1,1) node[circle, draw, fill=black]{};
        \filldraw[fill=\jellycolor,-,thick] (-1.4,0.45) to (-0.2,0.45) to[out=up, in=right] (-0.8,0.95) to[out=left, in=up] (-1.4,0.45);
        \draw[-,thick] (1,1) to[out=down,in=up] (1,-1.2) to[out=down,in=down] (-1.2,-1.2) to[out=up,in=down] (-1.2,0.45);
        \draw[-,thick] (0.5,1) to[out=down,in=up] (0.5,-1.2) to[out=down,in=down] (-0.8,-1.2) to[out=up,in=down] (-0.8,0.45);
        \draw[-,thick] (0,1) to[out=down,in=up] (0,-1.2) to[out=down,in=down] (-0.4,-1.2) to[out=up,in=down] (-0.4,0.45);
    \end{tikzpicture}~=~
    \begin{tikzpicture}[baseline = -10pt]
        \draw[-,thick] (0,1) node[circle, draw, fill=black]{};
        \draw[-,thick] (0.5,1) node[circle, draw, fill=black]{};
        \draw[-,thick] (1,1) node[circle, draw, fill=black]{};
        \filldraw[fill=\jellycolor,-,thick] (1.2,0.45) to (2.4,0.45) to[out=up, in=right] (1.8,0.95) to[out=left, in=up] (1.2,0.45);
        \draw[-,thick] (1,1) to[out=down,in=up] (1,-1.2) to[out=down,in=down] (-1.2,-1.2) to[out=up,in=down] (1.4,0.45);
        \draw[-,thick] (0.5,1) to[out=down,in=up] (0.5,-1.2) to[out=down,in=down] (-0.7,-1.2) to[out=up,in=down] (1.8,0.45);
        \draw[-,thick] (0,1) to[out=down,in=up] (0,-1.2) to[out=down,in=down] (-0.2,-1.2) to[out=up,in=down] (2.2,0.45);
    \end{tikzpicture}~=~
    \reflectbox{
    \begin{tikzpicture}[baseline = -10pt]
        \draw[-,thick] (0,1) node[circle, draw, fill=black]{};
        \draw[-,thick] (0.5,1) node[circle, draw, fill=black]{};
        \draw[-,thick] (1,1) node[circle, draw, fill=black]{};
        \filldraw[fill=\jellycolor,-,thick] (-1.4,0.45) to (-0.2,0.45) to[out=up, in=right] (-0.8,0.95) to[out=left, in=up] (-1.4,0.45);
        \draw[-,thick] (1,1) to[out=down,in=up] (1,-1.2) to[out=down,in=down] (-1.2,-1.2) to[out=up,in=down] (-1.2,0.45);
        \draw[-,thick] (0.5,1) to[out=down,in=up] (0.5,-1.2) to[out=down,in=down] (-0.8,-1.2) to[out=up,in=down] (-0.8,0.45);
        \draw[-,thick] (0,1) to[out=down,in=up] (0,-1.2) to[out=down,in=down] (-0.4,-1.2) to[out=up,in=down] (-0.4,0.45);
    \end{tikzpicture}
    }
\end{equation}
As a consequence, we can draw rotated jellyfish in our diagrams without ambiguity. In particular, the following jellyfish diagram should be interpreted to be equal to any of the expressions in (\ref{dual jelly}):
\begin{equation*}\label{j*}
    \jelly^*=~\begin{tikzpicture}[baseline = 12pt]
        \draw[-,thick]  \foreach \m in {1,2} {
            (\m,1) node[circle, draw, fill=black]{}
            };
        \draw (1.5,1) node{$\cdots$};
        \filldraw[fill=\jellycolor,-,thick] (1,0.5) to (2,0.5) to[out=down, in=right] (1.5,0) to[out=left, in=down] (1,0.5);
        \draw[-,thick] (1,1) to[out=down,in=up] (1.2,0.5);
        \draw[-,thick] (2,1) to[out=down,in=up] (1.8,0.5);
    \end{tikzpicture}
\end{equation*}

By relation (\ref{skew form relation}) it costs a negative sign to uncross the legs of a jellyfish:
\[
    \begin{tikzpicture}[baseline = 12pt]
        \draw \foreach \m in {-0.1,0.3,0.7,1.1} {
            (\m,0) node[circle, draw, fill=black]{}};
        \draw[-,thick] (-0.1, 0) to[out=up,in=down] (0.2, 0.5);
        \draw[-,thick] (0.3, 0) to[out=up,in=down] (0.6, 0.5);
        \draw[-,thick] (0.7, 0) to[out=up,in=down] (0.4, 0.5);
        \draw[-,thick] (1.1, 0) to[out=up,in=down] (0.8, 0.5);
        \filldraw[fill=\jellycolor,-,thick] (0,0.5) to (1,0.5) to[out=up, in=right] (0.5,1) to[out=left, in=up] (0,0.5);
    \end{tikzpicture}
    ~=~-~
    \begin{tikzpicture}[baseline = 12pt]
        \draw[-,thick] \foreach \m in {-0.1,0.3,0.7,1.1} {
            (\m,0) node[circle, draw, fill=black]{}};
        \draw[-,thick] (-0.1, 0) to[out=up,in=down] (0.2, 0.5);
        \draw[-,thick] (0.3, 0) to[out=up,in=down] (0.4, 0.5);
        \draw[-,thick] (0.7, 0) to[out=up,in=down] (0.6, 0.5);
        \draw[-,thick] (1.1, 0) to[out=up,in=down] (0.8, 0.5);
        \filldraw[fill=\jellycolor,-,thick] (0,0.5) to (1,0.5) to[out=up, in=right] (0.5,1) to[out=left, in=up] (0,0.5);
    \end{tikzpicture}
\]
Note that crossing two dangling legs does not change the morphism in $\JP(n)$. As a consequence (since we assume $\operatorname{char}\k\not=2$) we get that any jellyfish diagram with more than one dangling leg is equal to zero. For example, 
\[
    2\,\begin{tikzpicture}[baseline = 12pt]
        \draw \foreach \m in {0,0.7} {
            (\m,-0.25) node[circle, draw, fill=black]{}};
        \draw[-,thick] (0, -0.25) to[out=up,in=down] (0.2, 0.5);
        \draw[-,thick] (0.7, -0.25) to[out=up,in=down] (0.6, 0.5);
        \draw[-,thick] (0.8, 0.5) to[out=-135,in=135] (0.8,0.25) to[out=-45,in=45] (0.8,0);
        \draw[-,thick] (0.4, 0.5) to[out=-135,in=135] (0.4,0.25) to[out=-45,in=45] (0.4,0);
        \filldraw[fill=\jellycolor,-,thick] (0,0.5) to (1,0.5) to[out=up, in=right] (0.5,1) to[out=left, in=up] (0,0.5);
    \end{tikzpicture}
    ~=~\begin{tikzpicture}[baseline = 12pt]
        \draw \foreach \m in {0,0.7} {
            (\m,-0.15) node[circle, draw, fill=black]{}};
        \draw[-,thick] (0, -0.15) to[out=up,in=down] (0.2, 0.5);
        \draw[-,thick] (0.7, -0.15) to[out=up,in=down] (0.6, 0.5);
        \draw[-,thick] (0.8, 0.5) to[out=-135,in=135] (0.8,0.25) to[out=-45,in=45] (0.8,0);
        \draw[-,thick] (0.4, 0.5) to[out=down,in=135] (1,0.25) to[out=-45,in=45] (1,0);
        \filldraw[fill=\jellycolor,-,thick] (0,0.5) to (1,0.5) to[out=up, in=right] (0.5,1) to[out=left, in=up] (0,0.5);
    \end{tikzpicture}
    ~+~\begin{tikzpicture}[baseline = 12pt]
        \draw \foreach \m in {0,0.7} {
            (\m,-0.25) node[circle, draw, fill=black]{}};
        \draw[-,thick] (0, -0.25) to[out=up,in=down] (0.2, 0.5);
        \draw[-,thick] (0.7, -0.25) to[out=up,in=down] (0.6, 0.5);
        \draw[-,thick] (0.8, 0.5) to[out=-135,in=135] (0.8,0.25) to[out=-45,in=45] (0.8,0);
        \draw[-,thick] (0.4, 0.5) to[out=-135,in=135] (0.4,0.25) to[out=-45,in=45] (0.4,0);
        \filldraw[fill=\jellycolor,-,thick] (0,0.5) to (1,0.5) to[out=up, in=right] (0.5,1) to[out=left, in=up] (0,0.5);
    \end{tikzpicture}
    ~=~\begin{tikzpicture}[baseline = 12pt]
        \draw \foreach \m in {0,0.7} {
            (\m,-0.15) node[circle, draw, fill=black]{}};
        \draw[-,thick] (0, -0.15) to[out=up,in=down] (0.2, 0.5);
        \draw[-,thick] (0.7, -0.15) to[out=up,in=down] (0.6, 0.5);
        \draw[-,thick] (0.8, 0.5) to[out=-135,in=135] (0.8,0.25) to[out=-45,in=45] (0.8,0);
        \draw[-,thick] (0.4, 0.5) to[out=down,in=135] (1,0.25) to[out=-45,in=45] (1,0);
        \filldraw[fill=\jellycolor,-,thick] (0,0.5) to (1,0.5) to[out=up, in=right] (0.5,1) to[out=left, in=up] (0,0.5);
    \end{tikzpicture}
    ~-~\begin{tikzpicture}[baseline = 12pt]
        \draw \foreach \m in {0,0.7} {
            (\m,-0.15) node[circle, draw, fill=black]{}};
        \draw[-,thick] (0, -0.15) to[out=up,in=down] (0.2, 0.5);
        \draw[-,thick] (0.7, -0.15) to[out=up,in=down] (0.6, 0.5);
        \draw[-,thick] (0.8, 0.5) to[out=down,in=up] (0.8,0.4) to[out=down,in=up] (0.4,0);
        \draw[-,thick] (0.4, 0.5) to[out=down,in=135] (1,0.25) to[out=-45,in=45] (1,0);
        \filldraw[fill=\jellycolor,-,thick] (0,0.5) to (1,0.5) to[out=up, in=right] (0.5,1) to[out=left, in=up] (0,0.5);
    \end{tikzpicture}
    ~=0.
\]
In the computation above (\ref{skew form relation}) was only used to permute the legs of the right jellyfish. 
Similarly, any jellyfish diagram where two legs of the same jellyfish are connected is also zero. For example,
\[
    2\,\begin{tikzpicture}[baseline = 12pt]
        \draw \foreach \m in {0,0.5,1,1.5} {
            (\m,-0.25) node[circle, draw, fill=black]{}};
        \draw[-,thick] (0,-0.25) to[out=up,in=down] (0.2, 0.5);
        \draw[-,thick] ( 0.5,-0.25) to[out=up,in=down] (0.5, 0.5);
        \draw[-,thick] ( 1.5,-0.25) to[out=up,in=down] (0.8, 0.5);
        \draw[-,thick] (0.5,-0.25) to[out=up,in=up] (1.5,-0.25);
        \filldraw[fill=\jellycolor,-,thick] (0,0.5) to (1,0.5) to[out=up, in=right] (0.5,1) to[out=left, in=up] (0,0.5);
    \end{tikzpicture}
    =~\begin{tikzpicture}[baseline = 12pt]
        \draw \foreach \m in {0,0.5,1,1.5} {
            (\m,-0.25) node[circle, draw, fill=black]{}};
        \draw[-,thick] (0,-0.25) to[out=up,in=down] (0.2, 0.5);
        \draw[-,thick] ( 0.5,-0.25) to[out=up,in=down] (0.8, 0.5);
        \draw[-,thick] ( 1.5,-0.25) to[out=up,in=down] (0.5, 0.5);
        \draw[-,thick] (0.5,-0.25) to[out=up,in=up] (1.5,-0.25);
        \filldraw[fill=\jellycolor,-,thick] (0,0.5) to (1,0.5) to[out=up, in=right] (0.5,1) to[out=left, in=up] (0,0.5);
    \end{tikzpicture}
    +~\begin{tikzpicture}[baseline = 12pt]
        \draw \foreach \m in {0,0.5,1,1.5} {
            (\m,-0.25) node[circle, draw, fill=black]{}};
        \draw[-,thick] (0,-0.25) to[out=up,in=down] (0.2, 0.5);
        \draw[-,thick] ( 0.5,-0.25) to[out=up,in=down] (0.5, 0.5);
        \draw[-,thick] ( 1.5,-0.25) to[out=up,in=down] (0.8, 0.5);
        \draw[-,thick] (0.5,-0.25) to[out=up,in=up] (1.5,-0.25);
        \filldraw[fill=\jellycolor,-,thick] (0,0.5) to (1,0.5) to[out=up, in=right] (0.5,1) to[out=left, in=up] (0,0.5);
    \end{tikzpicture}
    =~\begin{tikzpicture}[baseline = 12pt]
        \draw \foreach \m in {0,0.5,1,1.5} {
            (\m,-0.25) node[circle, draw, fill=black]{}};
        \draw[-,thick] (0,-0.25) to[out=up,in=down] (0.2, 0.5);
        \draw[-,thick] ( 0.5,-0.25) to[out=up,in=down] (0.8, 0.5);
        \draw[-,thick] ( 1.5,-0.25) to[out=up,in=down] (0.5, 0.5);
        \draw[-,thick] (0.5,-0.25) to[out=up,in=up] (1.5,-0.25);
        \filldraw[fill=\jellycolor,-,thick] (0,0.5) to (1,0.5) to[out=up, in=right] (0.5,1) to[out=left, in=up] (0,0.5);
    \end{tikzpicture}
    -~\begin{tikzpicture}[baseline = 12pt]
        \draw \foreach \m in {0,0.5,1,1.5} {
            (\m,-0.25) node[circle, draw, fill=black]{}};
        \draw[-,thick] (0,-0.25) to[out=up,in=down] (0.2, 0.5);
        \draw[-,thick] ( 0.5,-0.25) to[out=up,in=down] (0.8, 0.5);
        \draw[-,thick] ( 1.5,-0.25) to[out=up,in=down] (0.5, 0.5);
        \draw[-,thick] (0.5,-0.25) to[out=up,in=up] (1.5,-0.25);
        \filldraw[fill=\jellycolor,-,thick] (0,0.5) to (1,0.5) to[out=up, in=right] (0.5,1) to[out=left, in=up] (0,0.5);
    \end{tikzpicture}
    =0.
\]

The \emph{jellyfish relation} (\ref{jellyfish relation}) is a bit more complicated. Its diagrammatic version is the following:
\[
    \begin{tikzpicture}[baseline = 12pt]
        \draw[-,thick]  \foreach \m in {1,2,2.5, 3.5} {
            (\m,0) node[circle, draw, fill=black]{}};
        \draw[-,thick] (1,0) to[out=up,in=down] (1.2,0.5);
        \draw[-,thick] (2,0) to[out=up,in=down] (1.8,0.5);
        \draw[-,thick] (2.5,0) to[out=up,in=down] (2.7,0.5);
        \draw[-,thick] (3.5,0) to[out=up,in=down] (3.3,0.5);
        \filldraw[fill=\jellycolor,-,thick] (1,0.5) to (2,0.5) to[out=up, in=right] (1.5,1) to[out=left, in=up] (1,0.5);
        \filldraw[fill=\jellycolor,-,thick] (2.5,0.5) to (3.5,0.5) to[out=up, in=right] (3,1) to[out=left, in=up] (2.5,0.5);
        \draw (1.5,.25) node{$\cdots$};
        \draw (3,.25) node{$\cdots$};
    \end{tikzpicture}
    ~=\sum_{\sigma\in S_n}(-1)^\sigma~
    \begin{tikzpicture}[baseline = 12pt]
        \draw[-,thick]  \foreach \m in {1,2,2.5, 3.5} {
            (\m,0) node[circle, draw, fill=black]{}
            (\m, 0) to (\m, 0.25)
            };
        \draw[-,thick] (0.9,0.25) to (2.1,0.25) to (2.1,0.75) to (0.9,0.75) to (0.9,0.25);      
        \draw[-,thick] (1,0.75) to[out=up, in=left] (1.75,1.25) to[out=right, in=up] (2.5,0.75) to (2.5,0.25);
        \draw[-,thick] (2,0.75) to[out=up, in=left] (2.75,1.25) to[out=right, in=up] (3.5,0.75) to (3.5,0.25);
        \draw (1.5,.1) node{$\cdots$};
        \draw (1.5,.5) node{$\sigma$};
        \draw (3,.5) node{$\cdots$};  
    \end{tikzpicture}
\]
For example, when $n=2$ we have 
\[
    \begin{tikzpicture}[baseline = 8pt]
        \draw[-,thick]  \foreach \m in {1,...,4} {
            (0.5*\m,0) node[circle, draw, fill=black]{}};
        \draw[-,thick] (0.5,0) to[out=up,in=down] (0.6,0.4);
        \draw[-,thick] (1,0) to[out=up,in=down] (0.9,0.4);
        \draw[-,thick] (1.5,0) to[out=up,in=down] (1.6,0.4);
        \draw[-,thick] (2,0) to[out=up,in=down] (1.9,0.4);
        \filldraw[fill=\jellycolor,-,thick] (0.4,0.4) to (1.1,0.4) to[out=up, in=right] (0.75,0.75) to[out=left, in=up] (0.4,0.4);
        \filldraw[fill=\jellycolor,-,thick] (1.4,0.4) to (2.1,0.4) to[out=up, in=right] (1.75,0.75) to[out=left, in=up] (1.4,0.4);
    \end{tikzpicture}
    ~=~\begin{tikzpicture}[baseline = 8pt]
        \draw \foreach \m in {1,...,4} {
            (0.5*\m,0) node[circle, draw, fill=black]{}
            };
        \draw[-,thick] (0.5,0) to[out=up, in=left] (1,0.8) to[out=right, in=up] (1.5,0);
        \draw[-,thick] (1,0) to[out=up, in=left] (1.5,0.8) to[out=right, in=up] (2,0);
    \end{tikzpicture}
    ~-~\begin{tikzpicture}[baseline = 8pt]
        \draw \foreach \m in {1,...,4} {
            (0.5*\m,0) node[circle, draw, fill=black]{}
            };
        \draw[-,thick] (0.5,0) to[out=up, in=left] (1.25,0.8) to[out=right, in=up] (2,0);
        \draw[-,thick] (1,0) to[out=up, in=left] (1.25,0.4) to[out=right, in=up] (1.5,0);
    \end{tikzpicture}
\] 
The jellyfish relation explains how to reduce the number of jellyfish in any jellyfish diagram with more than one jellyfish. Moreover, the product (horizontal or vertical) of any two jellyfish diagrams each having exactly one jellyfish can be written in terms of partition diagrams. For example, if $n=2$ and we let  
\[D_1=~
    \begin{tikzpicture}[baseline = 12pt]
        \draw[-,thick]  \foreach \m in {1,...,2} {
            (\m,0) node[circle, draw, fill=black]{}
            (\m,1) node[circle, draw, fill=black]{}};
        \draw[-,thick] (2,0) to[out=up,in=down] (1.9,0.4);
        \draw[-,thick] (1.5,0.4) to[out=down,in=down] (1,0.4) to[out=up,in=down] (1,1);
        \filldraw[fill=\jellycolor,-,thick] (1.3,0.4) to (2.1,0.4) to[out=up, in=right] (1.7,0.8) to[out=left, in=up] (1.3,0.4);
    \end{tikzpicture}
    \quad\text{and}\quad
    D_2=~
    \begin{tikzpicture}[baseline = 12pt]
        \draw[-,thick]  \foreach \m in {1,...,2} {
            (\m,0) node[circle, draw, fill=black]{}
            (\m,1) node[circle, draw, fill=black]{}};
        \draw[-,thick] (1,0) to[out=up,in=down] (1,1);
        \draw[-,thick] (2,0) to[out=up,in=down] (1.5,0.4);
        \draw[-,thick] (1.9,0.4) to[out=down,in=down] (2.3,0.4) to[out=up,in=down] (2,1);
        \filldraw[fill=\jellycolor,-,thick] (1.3,0.4) to (2.1,0.4) to[out=up, in=right] (1.7,0.8) to[out=left, in=up] (1.3,0.4);
    \end{tikzpicture}   
    \quad\text{then}
\]
\[D_2\circ D_1=~
    \begin{tikzpicture}[baseline = 21pt, scale=0.8]
        \draw[-,thick]  \foreach \m in {1,...,2} {
            (\m,0) node[circle, draw, fill=black]{}
            (\m,1) node[circle, draw, fill=black]{}
            (\m,2) node[circle, draw, fill=black]{}};
        \draw[-,thick] (1,1) to[out=up,in=down] (1,2);
        \draw[-,thick] (2,1) to[out=up,in=down] (1.5,1.4);
        \draw[-,thick] (1.9,1.4) to[out=down,in=down] (2.3,1.4) to[out=up,in=down] (2,2);
        \filldraw[fill=\jellycolor,-,thick] (1.3,1.4) to (2.1,1.4) to[out=up, in=right] (1.7,1.8) to[out=left, in=up] (1.3,1.4);
        \draw[-,thick] (2,0) to[out=up,in=down] (1.9,0.4);
        \draw[-,thick] (1.5,0.4) to[out=down,in=down] (1,0.4) to[out=up,in=down] (1,1);
        \filldraw[fill=\jellycolor,-,thick] (1.3,0.4) to (2.1,0.4) to[out=up, in=right] (1.7,0.8) to[out=left, in=up] (1.3,0.4);
    \end{tikzpicture}
    ~=~
    \begin{tikzpicture}[baseline = 12pt, scale=0.8]
        \draw[-,thick]  \foreach \m in {1,...,2} {
            (\m,0) node[circle, draw, fill=black]{}
            (\m,1.5) node[circle, draw, fill=black]{}};
        \draw[-,thick] (2,0) to[out=150,in=down] (1.2,0.5);
        \draw[-,thick] (0.8,0.5) to[out=down,in=down] (0.4,0.5) to[out=up,in=down] (1,1.5);
        \draw[-,thick] (2.2,0.5) to[out=down,in=down] (2.6,0.5) to[out=up,in=down] (2,1.5);
        \draw[-,thick] (1.8, 0.5) to[out=-135,in=135] (1.8,0.375) to[out=-45,in=45] (1.8,0.25);
        \filldraw[fill=\jellycolor,-,thick] (1.6,0.5) to (2.4,0.5) to[out=up, in=right] (2,0.9) to[out=left, in=up] (1.6,0.5);
        \filldraw[fill=\jellycolor,-,thick] (0.6,0.5) to (1.4,0.5) to[out=up, in=right] (1,0.9) to[out=left, in=up] (0.6,0.5);
    \end{tikzpicture}
    ~=~
    \begin{tikzpicture}[baseline = 12pt, scale=0.8]
        \draw[-,thick]  \foreach \m in {1,...,2} {
            (\m,0) node[circle, draw, fill=black]{}
            (\m,1.5) node[circle, draw, fill=black]{}};
        \draw[-,thick] (2,0) to[out=150,in=down] (1.2,0.5);
        \draw[-,thick] (0.8,0.5) to[out=down,in=down] (0.4,0.5) to[out=up,in=down] (1,1.5);
        \draw[-,thick] (2.2,0.5) to[out=down,in=down] (2.6,0.5) to[out=up,in=down] (2,1.5);
        \draw[-,thick] (1.8, 0.5) to[out=down,in=135] (1.8,0.375) to[out=-45,in=45] (1.8,0.25);
        \draw[-,thick] (0.8,0.5) to[out=up,in=up] (1.8,0.5);
        \draw[-,thick] (1.2,0.5) to[out=up,in=up] (2.2,0.5);
    \end{tikzpicture}
    ~-~
    \begin{tikzpicture}[baseline = 12pt, scale=0.8]
        \draw[-,thick]  \foreach \m in {1,...,2} {
            (\m,0) node[circle, draw, fill=black]{}
            (\m,1.5) node[circle, draw, fill=black]{}};
        \draw[-,thick] (2,0) to[out=150,in=down] (1.2,0.5);
        \draw[-,thick] (0.8,0.5) to[out=down,in=down] (0.4,0.5) to[out=up,in=down] (1,1.5);
        \draw[-,thick] (2.2,0.5) to[out=down,in=down] (2.6,0.5) to[out=up,in=down] (2,1.5);
        \draw[-,thick] (1.8, 0.5) to[out=down,in=135] (1.8,0.375) to[out=-45,in=45] (1.8,0.25);
        \draw[-,thick] (0.8,0.5) to[out=up,in=up] (2.2,0.5);
        \draw[-,thick] (1.2,0.5) to[out=up,in=up] (1.8,0.5);
    \end{tikzpicture}
    ~=~
    \begin{tikzpicture}[baseline = 5pt, scale=0.5]
        \draw[-,thick]  \foreach \m in {1,...,2} {
            (\m,0) node[circle, draw, fill=black]{}
            (\m,1) node[circle, draw, fill=black]{}};
        \draw[-,thick] (2,0) to (2,1);
    \end{tikzpicture}
    ~~-~~
    \begin{tikzpicture}[baseline = 5pt, scale=0.5]
        \draw[-,thick]  \foreach \m in {1,...,2} {
            (\m,0) node[circle, draw, fill=black]{}
            (\m,1) node[circle, draw, fill=black]{}};
        \draw[-,thick] (1,1) to[out=down,in=left] (1.5,0.5) to[out=right,in=down] (2,1);
    \end{tikzpicture}
\]
Here is an example in $\JP(3)$:
\[\begin{array}{rl}
    \begin{tikzpicture}[baseline = 12pt, scale=0.75]
        \draw[-,thick] \foreach \m in {0.4,1,1.6} {
            (\m,-0.2) node[circle, draw, fill=black]{}
            (\m,1.7) node[circle, draw, fill=black]{}
            };
        \draw[-,thick] (0.4, -0.2) to[out=up,in=down] (0.5, 0.15);
        \draw[-,thick] (1, -0.2) to[out=up,in=down] (1, 0.15);
        \draw[-,thick] (1.6, -0.2) to[out=up,in=down] (1.5, 0.15);
        \draw[-,thick] (0.4, 1.7) to[out=down,in=up] (0.5, 1.35);
        \draw[-,thick] (1, 1.7) to[out=down,in=up] (1, 1.35);
        \draw[-,thick] (1.6, 1.7) to[out=down,in=up] (1.5, 1.35);
        \filldraw[fill=\jellycolor,-,thick] (0.3,0.15) to (1.7,0.15) to[out=up, in=right] (1,0.6) to[out=left, in=up] (0.3,0.15);
        \filldraw[fill=\jellycolor,-,thick] (0.3,1.35) to (1.7,1.35) to[out=down, in=right] (1,0.9) to[out=left, in=down] (0.3,1.35);
    \end{tikzpicture}
    \!
    &=~
    \begin{tikzpicture}[baseline = 12pt, scale=0.75]
        \draw[-,thick] \foreach \m in {1,...,3} {
            (0.5*\m,-0.2) node[circle, draw, fill=black]{}
            (0.5*\m, -0.2) to[out=up, in=down] (0.5*\m-1, 0.75)
            (0.5*\m+1.5,1.7) node[circle, draw, fill=black]{}
            (0.5*\m+1.5, 1.7) to[out=down, in=up] (0.5*\m+2.4, 0.75) to[out=down, in=down] (2.7-0.5*\m, 0.75)
            };
            \filldraw[fill=\jellycolor,-,thick] (-0.7,0.75) to (0.7,0.75) to[out=up, in=right] (0,1.2) to[out=left, in=up] (-0.7,0.75);
            \filldraw[fill=\jellycolor,-,thick] (1,0.75) to (2.4,0.75) to[out=up, in=right] (1.7,1.2) to[out=left, in=up] (1,0.75);
    \end{tikzpicture}
    \\
    &=\sum\limits_{\sigma\in S_3}(-1)^\sigma~
    \begin{tikzpicture}[baseline = 12pt, scale=0.75]
        \draw[-,thick] \foreach \m in {1,...,3} {
            (0.5*\m,-0.2) node[circle, draw, fill=black]{}
            (0.5*\m, -0.2) to[out=up, in=down] (0.5*\m-1, 0.5)
            (0.5*\m+1.5,1.7) node[circle, draw, fill=black]{}
            (0.5*\m+1.5, 1.7) to[out=down, in=up] (0.5*\m+2.4, 0.75) to[out=down, in=down] (2.7-0.5*\m, 0.75) to[out=up, in=up] (1-0.5*\m, 1)
            };
            \draw[-,thick] (-0.7,0.5) to (0.7,0.5) to (0.7,1) to (-0.7,1) to (-0.7,0.5);
            \draw (0,0.75) node{$\sigma$};
    \end{tikzpicture}
    \\
    \\
    &=
    \begin{tikzpicture}[baseline = 8pt, scale=0.75]
        \draw[-,thick] \foreach \m in {1,...,3} {
            (0.5*\m,0) node[circle, draw, fill=black]{}
            (0.5*\m,1) node[circle, draw, fill=black]{}
            };
            \draw[-,thick] (0.5,0) to[out=up,in=down] (1.5,1);
            \draw[-,thick] (0.5,1) to[out=down,in=up] (1.5,0);
        \draw[-,thick] (1,0) to[out=up, in=down] (0.75, 0.5) to[out=up, in=down] (1,1);
    \end{tikzpicture}
    ~+~
    \begin{tikzpicture}[baseline = 8pt, scale=0.75]
        \draw[-,thick] \foreach \m in {1,...,3} {
            (0.5*\m,0) node[circle, draw, fill=black]{}
            (0.5*\m,1) node[circle, draw, fill=black]{}
            };
            \draw[-,thick] (0.5,0) to[out=up,in=down] (1,1);
            \draw[-,thick] (0.5,1) to[out=down,in=up] (1,0);
        \draw[-,thick] (1.5,0) to[out=up,in=down] (1.5,1);
    \end{tikzpicture}
    ~+~
    \begin{tikzpicture}[baseline = 8pt, scale=0.75]
        \draw[-,thick] \foreach \m in {1,...,3} {
            (0.5*\m,0) node[circle, draw, fill=black]{}
            (0.5*\m,1) node[circle, draw, fill=black]{}
            };
            \draw[-,thick] (1.5,0) to[out=up,in=down] (1,1);
            \draw[-,thick] (1.5,1) to[out=down,in=up] (1,0);
        \draw[-,thick] (0.5,0) to[out=up,in=down] (0.5,1);
    \end{tikzpicture}
    ~-~
    \begin{tikzpicture}[baseline = 8pt, scale=0.75]
        \draw[-,thick] \foreach \m in {1,...,3} {
            (0.5*\m,0) node[circle, draw, fill=black]{}
            (0.5*\m,1) node[circle, draw, fill=black]{}
            };
            \draw[-,thick] (1.5,0) to[out=up,in=down] (0.5,1);
            \draw[-,thick] (1,0) to[out=up,in=down] (1.5,1);
        \draw[-,thick] (0.5,0) to[out=up,in=down] (1,1);
    \end{tikzpicture}
    ~-~
    \begin{tikzpicture}[baseline = 8pt, scale=0.75]
        \draw[-,thick] \foreach \m in {1,...,3} {
            (0.5*\m,0) node[circle, draw, fill=black]{}
            (0.5*\m,1) node[circle, draw, fill=black]{}
            };
            \draw[-,thick] (1.5,1) to[out=down,in=up] (0.5,0);
            \draw[-,thick] (1,1) to[out=down,in=up] (1.5,0);
        \draw[-,thick] (0.5,1) to[out=down,in=up] (1,0);
    \end{tikzpicture}
    ~-~
    \begin{tikzpicture}[baseline = 8pt, scale=0.75]
        \draw[-,thick] \foreach \m in {1,...,3} {
            (0.5*\m,0) node[circle, draw, fill=black]{}
            (0.5*\m,1) node[circle, draw, fill=black]{}
            (0.5*\m,0) to[out=up,in=down] (0.5*\m,1)
            };
    \end{tikzpicture}
\end{array}\]

\begin{remark}
A computation similar to the one above shows 
\[\jelly^*\circ \jelly=(-1)^{\lfloor n/2\rfloor}\sum_{\sigma\in S_n}(-1)^\sigma\sigma,\] 
which is a scalar multiple of the primitive idempotent corresponding to the sign representation. The equation above could be used as an alternative to the jellyfish relation (\ref{jellyfish relation}) in the definition of $\JP(n)$. 
\end{remark}

We conclude this section with two examples that show partition diagrams are not linearly independent when viewed as morphisms in $\JP(n)$. The first example will appear again in the proof of Lemma \ref{reduce Y} in more generality. 

\begin{example}\label{example p dependent}
Let $D:3\to0$ denote the following jellyfish diagram in $\JP(2)$:
\[
    D=~
    \begin{tikzpicture}[baseline = 25pt]
        \draw[-,thick]  \foreach \m in {1,...,3} {
            (\m,0) node[circle, draw, fill=black]{}};
        \draw[-,thick] (1,0) to[out=up,in=down] (1,0.5);
        \draw[-,thick] (2,0) to[out=up,in=down] (1.6,0.5);
        \draw[-,thick] (2,0) to[out=up,in=down] (2.4,0.5);
        \draw[-,thick] (3,0) to[out=up,in=down] (3,0.5);
        \draw[-,thick] (1.6,1.6) to[out=up,in=up] (2.4,1.6);
        \draw[-,thick] (3, 2) to[out=-135,in=135] (3,1.8) to[out=-45,in=45] (3,1.6);
        \draw[-,thick] (1, 2) to[out=-135,in=135] (1,1.8) to[out=-45,in=45] (1,1.6);
        \filldraw[fill=\jellycolor,-,thick] (0.8,0.5) to (1.8,0.5) to[out=up, in=right] (1.3,1) to[out=left, in=up] (0.8,0.5);
        \filldraw[fill=\jellycolor,-,thick] (2.2,0.5) to (3.2,0.5) to[out=up, in=right] (2.7,1) to[out=left, in=up] (2.2,0.5);
        \filldraw[fill=\jellycolor,-,thick] (0.8,1.6) to (1.8,1.6) to[out=down, in=right] (1.3,1.1) to[out=left, in=down] (0.8,1.6);
        \filldraw[fill=\jellycolor,-,thick] (2.2,1.6) to (3.2,1.6) to[out=down, in=right] (2.7,1.1) to[out=left, in=down] (2.2,1.6);
    \end{tikzpicture}
\]
Applying the jellyfish relation (\ref{jellyfish relation}) to the top and bottom pairs of jellyfish respectively gives 
\[\begin{array}{rcccccccc}
    D= && 
    \begin{tikzpicture}[baseline = 20pt, scale=0.8]
        \draw[-,thick]  \foreach \m in {1,...,3} {
            (\m,0) node[circle, draw, fill=black]{}};
        \draw[-,thick] (1,0) to[out=up,in=down] (1,0.3);
        \draw[-,thick] (2,0) to[out=up,in=down] (1.6,0.4);
        \draw[-,thick] (2,0) to[out=up,in=down] (2.4,0.4);
        \draw[-,thick] (3,0) to[out=up,in=down] (3,0.3);
        \draw[-,thick] (1.6,1.6) to[out=up,in=up] (2.4,1.6);
        \draw[-,thick] (3, 2) to[out=-135,in=135] (3,1.8) to[out=-45,in=up] (3,1.6);
        \draw[-,thick] (1, 2) to[out=-135,in=135] (1,1.8) to[out=-45,in=up] (1,1.6);
        
        \draw[-,thick] (1,1.6) to[out=down,in=down] (2.4,1.6);
        \draw[-,thick] (1.6,1.6) to[out=down,in=down] (3,1.6);
        \draw[-,thick] (1,0.3) to[out=up,in=up] (2.4,0.4);
        \draw[-,thick] (1.6,0.4) to[out=up,in=up] (3,0.3);
    \end{tikzpicture}
    &-&
    \begin{tikzpicture}[baseline = 20pt, scale=0.8]
        \draw[-,thick]  \foreach \m in {1,...,3} {
            (\m,0) node[circle, draw, fill=black]{}};
        \draw[-,thick] (1,0) to[out=up,in=down] (1,0.3);
        \draw[-,thick] (2,0) to[out=up,in=down] (1.6,0.4);
        \draw[-,thick] (2,0) to[out=up,in=down] (2.4,0.4);
        \draw[-,thick] (3,0) to[out=up,in=down] (3,0.3);
        \draw[-,thick] (1.6,1.6) to[out=up,in=up] (2.4,1.6);
        \draw[-,thick] (3, 2) to[out=-135,in=135] (3,1.8) to[out=-45,in=up] (3,1.6);
        \draw[-,thick] (1, 2) to[out=-135,in=135] (1,1.8) to[out=-45,in=up] (1,1.6);
        
        \draw[-,thick] (1,1.6) to[out=down,in=down] (2.4,1.6);
        \draw[-,thick] (1.6,1.6) to[out=down,in=down] (3,1.6);
        \draw[-,thick] (1,0.3) to[out=up,in=up] (3,0.3);
        \draw[-,thick] (1.6,0.4) to[out=up,in=up] (2.4,0.4);
    \end{tikzpicture}
    &-&
    \begin{tikzpicture}[baseline = 20pt, scale=0.8]
        \draw[-,thick]  \foreach \m in {1,...,3} {
            (\m,0) node[circle, draw, fill=black]{}};
        \draw[-,thick] (1,0) to[out=up,in=down] (1,0.3);
        \draw[-,thick] (2,0) to[out=up,in=down] (1.6,0.4);
        \draw[-,thick] (2,0) to[out=up,in=down] (2.4,0.4);
        \draw[-,thick] (3,0) to[out=up,in=down] (3,0.3);
        \draw[-,thick] (1.6,1.6) to[out=up,in=up] (2.4,1.6);
        \draw[-,thick] (3, 2) to[out=-135,in=135] (3,1.8) to[out=-45,in=up] (3,1.6);
        \draw[-,thick] (1, 2) to[out=-135,in=135] (1,1.8) to[out=-45,in=up] (1,1.6);
        
        \draw[-,thick] (1,1.6) to[out=down,in=down] (3,1.6);
        \draw[-,thick] (1.6,1.6) to[out=down,in=down] (2.4,1.6);
        \draw[-,thick] (1,0.3) to[out=up,in=up] (2.4,0.4);
        \draw[-,thick] (1.6,0.4) to[out=up,in=up] (3,0.3);
    \end{tikzpicture}
    &+&
    \begin{tikzpicture}[baseline = 20pt, scale=0.8]
        \draw[-,thick]  \foreach \m in {1,...,3} {
            (\m,0) node[circle, draw, fill=black]{}};
        \draw[-,thick] (1,0) to[out=up,in=down] (1,0.3);
        \draw[-,thick] (2,0) to[out=up,in=down] (1.6,0.4);
        \draw[-,thick] (2,0) to[out=up,in=down] (2.4,0.4);
        \draw[-,thick] (3,0) to[out=up,in=down] (3,0.3);
        \draw[-,thick] (1.6,1.6) to[out=up,in=up] (2.4,1.6);
        \draw[-,thick] (3, 2) to[out=-135,in=135] (3,1.8) to[out=-45,in=up] (3,1.6);
        \draw[-,thick] (1, 2) to[out=-135,in=135] (1,1.8) to[out=-45,in=up] (1,1.6);
        
        \draw[-,thick] (1,1.6) to[out=down,in=down] (3,1.6);
        \draw[-,thick] (1.6,1.6) to[out=down,in=down] (2.4,1.6);
        \draw[-,thick] (1,0.3) to[out=up,in=up] (3,0.3);
        \draw[-,thick] (1.6,0.4) to[out=up,in=up] (2.4,0.4);
    \end{tikzpicture}
    \\
    \\
    =&2&
    \begin{tikzpicture}[baseline = 0pt, scale=0.8]
        \draw[-,thick]  \foreach \m in {1,...,3} {
            (\m,0) node[circle, draw, fill=black]{}};
        \draw[-,thick] (1,0) to[out=up,in=left] (1.5,0.5) to[out=right,in=up] (2,0) to[out=up,in=left] (2.5,0.5) to[out=right,in=up] (3,0);
    \end{tikzpicture}
    &-2&
    \begin{tikzpicture}[baseline = 0pt, scale=0.8]
        \draw[-,thick]  \foreach \m in {1,...,3} {
            (\m,0) node[circle, draw, fill=black]{}};
        \draw[-,thick] (1,0) to[out=up,in=up] (3,0);
    \end{tikzpicture}
    &-4&
    \begin{tikzpicture}[baseline = 0pt, scale=0.8]
        \draw[-,thick]  \foreach \m in {1,...,3} {
            (\m,0) node[circle, draw, fill=black]{}};
        \draw[-,thick] (1,0) to[out=up,in=left] (1.5,0.5) to[out=right,in=up] (2,0) to[out=up,in=left] (2.5,0.5) to[out=right,in=up] (3,0);
    \end{tikzpicture}
    &+4&
    \begin{tikzpicture}[baseline = 0pt, scale=0.8]
        \draw[-,thick]  \foreach \m in {1,...,3} {
            (\m,0) node[circle, draw, fill=black]{}};
        \draw[-,thick] (1,0) to[out=up,in=up] (3,0);
    \end{tikzpicture}
    \\[3pt]
    =&2&
    \begin{tikzpicture}[baseline = 0pt, scale=0.8]
        \draw[-,thick]  \foreach \m in {1,...,3} {
            (\m,0) node[circle, draw, fill=black]{}};
        \draw[-,thick] (1,0) to[out=up,in=up] (3,0);
    \end{tikzpicture}
    &-2&
    \begin{tikzpicture}[baseline = 0pt, scale=0.8]
        \draw[-,thick]  \foreach \m in {1,...,3} {
            (\m,0) node[circle, draw, fill=black]{}};
        \draw[-,thick] (1,0) to[out=up,in=left] (1.5,0.5) to[out=right,in=up] (2,0) to[out=up,in=left] (2.5,0.5) to[out=right,in=up] (3,0);
    \end{tikzpicture}
\end{array}\]
On the other hand, applying the jellyfish relation (\ref{jellyfish relation}) to the left and right  pairs of jellyfish respectively gives  
\[\begin{array}{rccccccccccc}
    D=& 
    \begin{tikzpicture}[baseline = 20pt, scale=0.8]
        \draw[-,thick]  \foreach \m in {1,...,3} {
            (\m,0) node[circle, draw, fill=black]{}};
        \draw[-,thick] (1,0) to[out=up,in=down] (1,0.3);
        \draw[-,thick] (2,0) to[out=up,in=down] (1.6,0.5);
        \draw[-,thick] (2,0) to[out=up,in=down] (2.4,0.5);
        \draw[-,thick] (3,0) to[out=up,in=down] (3,0.3);
        \draw[-,thick] (1.6,1.6) to[out=up,in=up] (2.4,1.6);
        \draw[-,thick] (3, 2) to[out=-135,in=135] (3,1.8) to[out=-45,in=up] (3,1.6);
        \draw[-,thick] (1, 2) to[out=-135,in=135] (1,1.8) to[out=-45,in=up] (1,1.6);
        
        \draw[-,thick] (1,0.3) to[out=up,in=down] (1.6,1.6);
        \draw[-,thick] (1.6,0.5) to[out=up,in=down] (1,1.6);
        \draw[-,thick] (2.4,0.5) to[out=up,in=down] (3,1.6);
        \draw[-,thick] (3,0.3)to[out=up,in=down] (2.4,1.6);
    \end{tikzpicture}
    &-&
    \begin{tikzpicture}[baseline = 20pt, scale=0.8]
        \draw[-,thick]  \foreach \m in {1,...,3} {
            (\m,0) node[circle, draw, fill=black]{}};
        \draw[-,thick] (1,0) to[out=up,in=down] (1,0.3);
        \draw[-,thick] (2,0) to[out=up,in=down] (1.6,0.5);
        \draw[-,thick] (2,0) to[out=up,in=down] (2.4,0.5);
        \draw[-,thick] (3,0) to[out=up,in=down] (3,0.3);
        \draw[-,thick] (1.6,1.6) to[out=up,in=up] (2.4,1.6);
        \draw[-,thick] (3, 2) to[out=-135,in=135] (3,1.8) to[out=-45,in=up] (3,1.6);
        \draw[-,thick] (1, 2) to[out=-135,in=135] (1,1.8) to[out=-45,in=up] (1,1.6);
        
        \draw[-,thick] (1,0.3) to[out=up,in=down] (1.6,1.6);
        \draw[-,thick] (1.6,0.5) to[out=up,in=down] (1,1.6);
        \draw[-,thick] (2.4,0.5) to[out=up,in=down] (2.4,1.6);
        \draw[-,thick] (3,0.3)to[out=up,in=down] (3,1.6);
    \end{tikzpicture}
    &-&
    \begin{tikzpicture}[baseline = 20pt, scale=0.8]
        \draw[-,thick]  \foreach \m in {1,...,3} {
            (\m,0) node[circle, draw, fill=black]{}};
        \draw[-,thick] (1,0) to[out=up,in=down] (1,0.3);
        \draw[-,thick] (2,0) to[out=up,in=down] (1.6,0.5);
        \draw[-,thick] (2,0) to[out=up,in=down] (2.4,0.5);
        \draw[-,thick] (3,0) to[out=up,in=down] (3,0.3);
        \draw[-,thick] (1.6,1.6) to[out=up,in=up] (2.4,1.6);
        \draw[-,thick] (3, 2) to[out=-135,in=135] (3,1.8) to[out=-45,in=up] (3,1.6);
        \draw[-,thick] (1, 2) to[out=-135,in=135] (1,1.8) to[out=-45,in=up] (1,1.6);
        
        \draw[-,thick] (1,0.3) to[out=up,in=down] (1,1.6);
        \draw[-,thick] (1.6,0.5) to[out=up,in=down] (1.6,1.6);
        \draw[-,thick] (2.4,0.5) to[out=up,in=down] (3,1.6);
        \draw[-,thick] (3,0.3)to[out=up,in=down] (2.4,1.6);
    \end{tikzpicture}
    &+&
    \begin{tikzpicture}[baseline = 20pt, scale=0.8]
        \draw[-,thick]  \foreach \m in {1,...,3} {
            (\m,0) node[circle, draw, fill=black]{}};
        \draw[-,thick] (1,0) to[out=up,in=down] (1,0.3);
        \draw[-,thick] (2,0) to[out=up,in=down] (1.6,0.5);
        \draw[-,thick] (2,0) to[out=up,in=down] (2.4,0.5);
        \draw[-,thick] (3,0) to[out=up,in=down] (3,0.3);
        \draw[-,thick] (1.6,1.6) to[out=up,in=up] (2.4,1.6);
        \draw[-,thick] (3, 2) to[out=-135,in=135] (3,1.8) to[out=-45,in=up] (3,1.6);
        \draw[-,thick] (1, 2) to[out=-135,in=135] (1,1.8) to[out=-45,in=up] (1,1.6);
        
        \draw[-,thick] (1,0.3) to[out=up,in=down] (1,1.6);
        \draw[-,thick] (1.6,0.5) to[out=up,in=down] (1.6,1.6);
        \draw[-,thick] (2.4,0.5) to[out=up,in=down] (2.4,1.6);
        \draw[-,thick] (3,0.3)to[out=up,in=down] (3,1.6);
    \end{tikzpicture}
    \\
    \\
    =&
    \begin{tikzpicture}[baseline = 0pt, scale=0.8]
        \draw[-,thick]  \foreach \m in {1,...,3} {
            (\m,0) node[circle, draw, fill=black]{}};
        \draw[-,thick] (1,0) to[out=up,in=up] (3,0);
    \end{tikzpicture}
    &-&
    \begin{tikzpicture}[baseline = 0pt, scale=0.8]
        \draw[-,thick]  \foreach \m in {1,...,3} {
            (\m,0) node[circle, draw, fill=black]{}};
        \draw[-,thick] (1,0) to[out=up,in=left] (1.5,0.5) to[out=right,in=up] (2,0);
    \end{tikzpicture}
    &-&
    \begin{tikzpicture}[baseline = 0pt, scale=0.8]
        \draw[-,thick]  \foreach \m in {1,...,3} {
            (\m,0) node[circle, draw, fill=black]{}};
        \draw[-,thick] (2,0) to[out=up,in=left] (2.5,0.5) to[out=right,in=up] (3,0);
    \end{tikzpicture}
    &+&
    \begin{tikzpicture}[baseline = 0pt, scale=0.8]
        \draw[-,thick]  \foreach \m in {1,...,3} {
            (\m,0) node[circle, draw, fill=black]{}};
    \end{tikzpicture}
\end{array}\]
Subtracting the two expressions for $D$ computed above shows us that the following linear combination of partition diagrams is zero in $\JP(2)$:
\begin{equation}\label{P dependent example}
    \begin{tikzpicture}[baseline = 0pt, scale=0.5]
        \draw[-,thick]  \foreach \m in {1,...,3} {
            (\m,0) node[circle, draw, fill=black]{}};
    \end{tikzpicture}\quad-\quad
    \begin{tikzpicture}[baseline = 0pt, scale=0.5]
        \draw[-,thick]  \foreach \m in {1,...,3} {
            (\m,0) node[circle, draw, fill=black]{}};
        \draw[-,thick] (1,0) to[out=up,in=up] (3,0);
    \end{tikzpicture}\quad-\quad
    \begin{tikzpicture}[baseline = 0pt, scale=0.5]
        \draw[-,thick]  \foreach \m in {1,...,3} {
            (\m,0) node[circle, draw, fill=black]{}};
        \draw[-,thick] (1,0) to[out=up,in=left] (1.5,0.5) to[out=right,in=up] (2,0);
    \end{tikzpicture}\quad-\quad
    \begin{tikzpicture}[baseline = 0pt, scale=0.5]
        \draw[-,thick]  \foreach \m in {1,...,3} {
            (\m,0) node[circle, draw, fill=black]{}};
        \draw[-,thick] (2,0) to[out=up,in=left] (2.5,0.5) to[out=right,in=up] (3,0);
    \end{tikzpicture}\quad+2\quad
    \begin{tikzpicture}[baseline = 0pt, scale=0.5]
        \draw[-,thick]  \foreach \m in {1,...,3} {
            (\m,0) node[circle, draw, fill=black]{}};
        \draw[-,thick] (1,0) to[out=up,in=left] (1.5,0.5) to[out=right,in=up] (2,0) to[out=up,in=left] (2.5,0.5) to[out=right,in=up] (3,0);
    \end{tikzpicture}
\end{equation}
Notice that (\ref{P dependent example}) is equal to $x_Y$ where 
\[Y=~
    \begin{tikzpicture}[baseline = 0pt, scale=0.5]
        \draw[-,thick]  \foreach \m in {1,...,3} {
            (\m,0) node[circle, draw, fill=black]{}};
    \end{tikzpicture}
\]
By Theorem \ref{thm: Phi is full and kernel}, $\Phi(x_Y)=0$ since $Y$ has more than $n=2$ parts. Indeed, since $\Res^{S_n}_{A_n}$ is faithful and (\ref{square of functors}) commutes, any linear dependency among partition diagrams in $\JP(n)$ must come from the kernel of $\Phi$. It will follow from Theorem \ref{Psi is faithful} that if $n>1$ and either $\operatorname{char}\k=0$ or $\operatorname{char}\k\geq n$, then all linear dependencies among partition diagrams in $\JP(n)$ come from the kernel of $\Phi$.  
\end{example}


\begin{example}\label{Grood non-assoc}
Let $\alpha$ and $\beta$ denote the following jellyfish diagrams:
\[\alpha=~\begin{tikzpicture}[baseline = 12pt]
        \draw \foreach \m in {1,...,2} {
            (\m,0) node[circle, draw, fill=black,inner sep=0pt, minimum width=4pt]{}
            (\m,1) node[circle, draw, fill=black,inner sep=0pt, minimum width=4pt]{}
            };
        \draw[-,thick] (2,0) to (2,1);
        \filldraw[fill=\jellycolor,-,thick] (0.8,0.35) to (1.8,0.35) to[out=up,in=right] (1.3,0.8) to[out=left,in=up] (0.8,0.35);
        \draw[-,thick] (1,0) to[out=up,in=down] (1.6,0.35);
        \draw[-,thick] (1,1) to[out=down,in=up] (0.4,0.35) to[out=down,in=down] (1,0.35);
    \end{tikzpicture}
    \qquad
    \beta=~
    \begin{tikzpicture}[baseline = 12pt]
        \draw \foreach \m in {0,1} {
            (\m,0) node[circle, draw, fill=black,inner sep=0pt, minimum width=4pt]{}
            (\m,1) node[circle, draw, fill=black,inner sep=0pt, minimum width=4pt]{}
            };
        \draw[-,thick] (0,0) to (0,1);
        \filldraw[fill=\jellycolor,-,thick] (0.8,0.35) to (1.8,0.35) to[out=up,in=right] (1.3,0.8) to[out=left,in=up] (0.8,0.35);
        \draw[-,thick] (1,0) to[out=up,in=down] (1.6,0.35);
        \draw[-,thick] (1,1) to[out=down,in=up] (0.4,0.35) to[out=down,in=down] (1,0.35);
    \end{tikzpicture}
\] 
Consider the following composition:
\[\alpha\circ \beta\circ \alpha=~
    \begin{tikzpicture}[baseline = 36pt]
        \draw \foreach \m in {1,...,2} {
            (\m,0) node[circle, draw, fill=black,inner sep=0pt, minimum width=4pt]{}
            (\m,3) node[circle, draw, fill=black,inner sep=0pt, minimum width=4pt]{}
            };
        \draw[-,thick] (2,0) to (2,1);
        \filldraw[fill=\jellycolor,-,thick] (0.8,0.35) to (1.8,0.35) to[out=up,in=right] (1.3,0.8) to[out=left,in=up] (0.8,0.35);
        \draw[-,thick] (1,0) to[out=up,in=down] (1.6,0.35);
        \draw[-,thick] (1,1) to[out=down,in=up] (0.4,0.35) to[out=down,in=down] (1,0.35);
        \draw[-,thick] (2,2) to (2,3);
        \filldraw[fill=\jellycolor,-,thick] (0.8,2.35) to (1.8,2.35) to[out=up,in=right] (1.3,2.8) to[out=left,in=up] (0.8,2.35);
        \draw[-,thick] (1,2) to[out=up,in=down] (1.6,2.35);
        \draw[-,thick] (1,3) to[out=down,in=up] (0.4,2.35) to[out=down,in=down] (1,2.35);
        \draw[-,thick] (1,1) to (1,2);
        \filldraw[fill=\jellycolor,-,thick] (1.8,1.35) to (2.8,1.35) to[out=up,in=right] (2.3,1.8) to[out=left,in=up] (1.8,1.35);
        \draw[-,thick] (2,1) to[out=up,in=down] (2.6,1.35);
        \draw[-,thick] (2,2) to[out=down,in=up] (1.4,1.35) to[out=down,in=down] (2,1.35);
    \end{tikzpicture}
    =-~
    \begin{tikzpicture}[baseline = 36pt]
        \draw \foreach \m in {2,3} {
            (\m,0) node[circle, draw, fill=black,inner sep=0pt, minimum width=4pt]{}
            (\m,3) node[circle, draw, fill=black,inner sep=0pt, minimum width=4pt]{}
            };
        \filldraw[fill=\jellycolor,-,thick] (0.8,1.35) to (1.8,1.35) to[out=up,in=right] (1.3,1.8) to[out=left,in=up] (0.8,1.35);
        \filldraw[fill=\jellycolor,-,thick] (2,1.35) to (3,1.35) to[out=up,in=right] (2.5,1.8) to[out=left,in=up] (2,1.35);
        \filldraw[fill=\jellycolor,-,thick] (3.2,1.35) to (4.2,1.35) to[out=up,in=right] (3.7,1.8) to[out=left,in=up] (3.2,1.35);
        \draw[-,thick] (2,3) to[out=down,in=up] (0.4,1.35) to[out=down,in=left] (0.7,1.1) to[out=right,in=down] (1,1.35);
        \draw[-,thick] (1.6,1.35) to[out=down,in=left] (1.9,1.1) to[out=right,in=down] (2.2,1.35);
        \draw[-,thick] (2.8,1.35) to[out=down,in=up] (2,0);
        \draw[-,thick] (3.4,1.35) to[out=down,in=up] (3,0);
        \draw[-,thick] (3,3) to[out=down,in=up] (4.6,1.35) to[out=down,in=right] (4.3,1.1) to[out=left,in=down] (4,1.35);
    \end{tikzpicture}
\]
We can reduce the number of jellyfish in the diagram above by using the jellyfish relation on any of the three pairs of jellyfish:
\[\alpha\circ \beta\circ \alpha=~-~
    \begin{tikzpicture}[baseline = 36pt]
        \draw \foreach \m in {2,3} {
            (\m,0.5) node[circle, draw, fill=black,inner sep=0pt, minimum width=4pt]{}
            (\m,2.5) node[circle, draw, fill=black,inner sep=0pt, minimum width=4pt]{}
            };
        \filldraw[fill=\jellycolor,-,thick] (3.2,1.35) to (4.2,1.35) to[out=up,in=right] (3.7,1.8) to[out=left,in=up] (3.2,1.35);
        \draw[-,thick] (2,2.5) to[out=down,in=up] (0.4,1.35) to[out=down,in=left] (0.7,1.1) to[out=right,in=down] (1,1.35);
        \draw[-,thick] (1.6,1.35) to[out=down,in=left] (1.9,1.1) to[out=right,in=down] (2.2,1.35);
        \draw[-,thick] (2.8,1.35) to[out=down,in=up] (2,0.5);
        \draw[-,thick] (3.4,1.35) to[out=down,in=up] (3,0.5);
        \draw[-,thick] (3,2.5) to[out=down,in=up] (4.6,1.35) to[out=down,in=right] (4.3,1.1) to[out=left,in=down] (4,1.35);
        \draw[-,thick] (1,1.35) to[out=up,in=up] (2.2,1.35);
        \draw[-,thick] (1.6,1.35) to[out=up,in=up] (2.8,1.35);
    \end{tikzpicture}
    ~+~
    \begin{tikzpicture}[baseline = 36pt]
        \draw \foreach \m in {2,3} {
            (\m,0.5) node[circle, draw, fill=black,inner sep=0pt, minimum width=4pt]{}
            (\m,2.5) node[circle, draw, fill=black,inner sep=0pt, minimum width=4pt]{}
            };
        \filldraw[fill=\jellycolor,-,thick] (3.2,1.35) to (4.2,1.35) to[out=up,in=right] (3.7,1.8) to[out=left,in=up] (3.2,1.35);
        \draw[-,thick] (2,2.5) to[out=down,in=up] (0.4,1.35) to[out=down,in=left] (0.7,1.1) to[out=right,in=down] (1,1.35);
        \draw[-,thick] (1.6,1.35) to[out=down,in=left] (1.9,1.1) to[out=right,in=down] (2.2,1.35);
        \draw[-,thick] (2.8,1.35) to[out=down,in=up] (2,0.5);
        \draw[-,thick] (3.4,1.35) to[out=down,in=up] (3,0.5);
        \draw[-,thick] (3,2.5) to[out=down,in=up] (4.6,1.35) to[out=down,in=right] (4.3,1.1) to[out=left,in=down] (4,1.35);
        \draw[-,thick] (1,1.35) to[out=up,in=up] (2.8,1.35);
        \draw[-,thick] (1.6,1.35) to[out=up,in=left] (1.9,1.6) to[out=right,in=up] (2.2,1.35);
    \end{tikzpicture}
\]
\[\alpha\circ \beta\circ \alpha=~-~
    \begin{tikzpicture}[baseline = 36pt]
        \draw \foreach \m in {2,3} {
            (\m,0.5) node[circle, draw, fill=black,inner sep=0pt, minimum width=4pt]{}
            (\m,2.5) node[circle, draw, fill=black,inner sep=0pt, minimum width=4pt]{}
            };
         \filldraw[fill=\jellycolor,-,thick] (0.8,1.35) to (1.8,1.35) to[out=up,in=right] (1.3,1.8) to[out=left,in=up] (0.8,1.35);
        \draw[-,thick] (2,2.5) to[out=down,in=up] (0.4,1.35) to[out=down,in=left] (0.7,1.1) to[out=right,in=down] (1,1.35);
        \draw[-,thick] (1.6,1.35) to[out=down,in=left] (1.9,1.1) to[out=right,in=down] (2.2,1.35);
        \draw[-,thick] (2.8,1.35) to[out=down,in=up] (2,0.5);
        \draw[-,thick] (3.4,1.35) to[out=down,in=up] (3,0.5);
        \draw[-,thick] (3,2.5) to[out=down,in=up] (4.6,1.35) to[out=down,in=right] (4.3,1.1) to[out=left,in=down] (4,1.35);
        \draw[-,thick] (2.2,1.35) to[out=up,in=up] (3.4,1.35);
        \draw[-,thick] (2.8,1.35) to[out=up,in=up] (4,1.35);
    \end{tikzpicture}
    ~+~
    \begin{tikzpicture}[baseline = 36pt]
        \draw \foreach \m in {2,3} {
            (\m,0.5) node[circle, draw, fill=black,inner sep=0pt, minimum width=4pt]{}
            (\m,2.5) node[circle, draw, fill=black,inner sep=0pt, minimum width=4pt]{}
            };
         \filldraw[fill=\jellycolor,-,thick] (0.8,1.35) to (1.8,1.35) to[out=up,in=right] (1.3,1.8) to[out=left,in=up] (0.8,1.35);
        \draw[-,thick] (2,2.5) to[out=down,in=up] (0.4,1.35) to[out=down,in=left] (0.7,1.1) to[out=right,in=down] (1,1.35);
        \draw[-,thick] (1.6,1.35) to[out=down,in=left] (1.9,1.1) to[out=right,in=down] (2.2,1.35);
        \draw[-,thick] (2.8,1.35) to[out=down,in=up] (2,0.5);
        \draw[-,thick] (3.4,1.35) to[out=down,in=up] (3,0.5);
        \draw[-,thick] (3,2.5) to[out=down,in=up] (4.6,1.35) to[out=down,in=right] (4.3,1.1) to[out=left,in=down] (4,1.35);
        \draw[-,thick] (2.2,1.35) to[out=up,in=up] (4,1.35);
        \draw[-,thick] (2.8,1.35) to[out=up,in=up] (3.4,1.35);
    \end{tikzpicture}
\]
\[\alpha\circ \beta\circ \alpha=~-~
    \begin{tikzpicture}[baseline = 36pt]
        \draw \foreach \m in {2,3} {
            (\m,0.5) node[circle, draw, fill=black,inner sep=0pt, minimum width=4pt]{}
            (\m,2.5) node[circle, draw, fill=black,inner sep=0pt, minimum width=4pt]{}
            };
        \filldraw[fill=\jellycolor,-,thick] (2,1.35) to (3,1.35) to[out=up,in=right] (2.5,1.8) to[out=left,in=up] (2,1.35);
        \draw[-,thick] (2,2.5) to[out=down,in=up] (0.4,1.35) to[out=down,in=left] (0.7,1.1) to[out=right,in=down] (1,1.35);
        \draw[-,thick] (1.6,1.35) to[out=down,in=left] (1.9,1.1) to[out=right,in=down] (2.2,1.35);
        \draw[-,thick] (2.8,1.35) to[out=down,in=up] (2,0.5);
        \draw[-,thick] (3.4,1.35) to[out=down,in=up] (3,0.5);
        \draw[-,thick] (3,2.5) to[out=down,in=up] (4.6,1.35) to[out=down,in=right] (4.3,1.1) to[out=left,in=down] (4,1.35);
        \draw[-,thick] (1,1.35) to[out=up,in=left] (2.2,2) to[out=right,in=up] (3.4,1.35);
        \draw[-,thick] (1.6,1.35) to[out=up,in=left] (2.8,2) to[out=right,in=up] (4,1.35);
    \end{tikzpicture}
    ~+~
    \begin{tikzpicture}[baseline = 36pt]
        \draw \foreach \m in {2,3} {
            (\m,0.5) node[circle, draw, fill=black,inner sep=0pt, minimum width=4pt]{}
            (\m,2.5) node[circle, draw, fill=black,inner sep=0pt, minimum width=4pt]{}
            };
        \filldraw[fill=\jellycolor,-,thick] (2,1.35) to (3,1.35) to[out=up,in=right] (2.5,1.8) to[out=left,in=up] (2,1.35);
        \draw[-,thick] (2,2.5) to[out=down,in=up] (0.4,1.35) to[out=down,in=left] (0.7,1.1) to[out=right,in=down] (1,1.35);
        \draw[-,thick] (1.6,1.35) to[out=down,in=left] (1.9,1.1) to[out=right,in=down] (2.2,1.35);
        \draw[-,thick] (2.8,1.35) to[out=down,in=up] (2,0.5);
        \draw[-,thick] (3.4,1.35) to[out=down,in=up] (3,0.5);
        \draw[-,thick] (3,2.5) to[out=down,in=up] (4.6,1.35) to[out=down,in=right] (4.3,1.1) to[out=left,in=down] (4,1.35);
        \draw[-,thick] (1,1.35) to[out=up,in=left] (2.5,2) to[out=right,in=up] (4,1.35);
        \draw[-,thick] (1.6,1.35) to[out=up,in=left] (2.5,1.9) to[out=right,in=up] (3.4,1.35);
    \end{tikzpicture}
\]
Simplifying the thee expressions for $\alpha\circ \beta\circ \alpha$ above we conclude:
\begin{equation}\label{Grood example}
\begin{tikzpicture}[baseline = 12pt]
    \draw \foreach \m in {0,1} {
        (\m,0) node[circle, draw, fill=black,inner sep=0pt, minimum width=4pt]{}
        (\m,1) node[circle, draw, fill=black,inner sep=0pt, minimum width=4pt]{}
        };
        \draw[-,thick] (0,0) to (0,1);
        \filldraw[fill=\jellycolor,-,thick] (0.2,0.35) to (1.2,0.35) to[out=up,in=right] (0.7,0.8) to[out=left,in=up] (0.2,0.35);
        \draw[-,thick] (1,0) to[out=up,in=down] (0.4,0.35);
        \draw[-,thick] (1,1) to[out=down,in=up] (1.6,0.35) to[out=down,in=down] (1,0.35);
\end{tikzpicture}
~=~
\begin{tikzpicture}[baseline = 12pt]
    \draw \foreach \m in {0,1} {
        (\m,0) node[circle, draw, fill=black,inner sep=0pt, minimum width=4pt]{}
        (\m,1) node[circle, draw, fill=black,inner sep=0pt, minimum width=4pt]{}
        };
        \draw[-,thick] (0,0) to[out=up,in=up] (1,0);
        \filldraw[fill=\jellycolor,-,thick] (0,0.5) to (1,0.5) to[out=up,in=right] (0.5,0.95) to[out=left,in=up] (0,0.5);
        \draw[-,thick] (1,1) to[out=down,in=up] (1.4,0.5) to[out=down,in=down] (0.8,0.5);
        \draw[-,thick] (0,1) to[out=down,in=up] (-0.4,0.5) to[out=down,in=down] (0.2,0.5);
\end{tikzpicture}
~-~
\begin{tikzpicture}[baseline = 12pt]
    \draw \foreach \m in {0,1} {
        (\m,0) node[circle, draw, fill=black,inner sep=0pt, minimum width=4pt]{}
        (\m,1) node[circle, draw, fill=black,inner sep=0pt, minimum width=4pt]{}
        };
        \draw[-,thick] (1,0) to[out=up,in=down] (0.8,0.5);
        \filldraw[fill=\jellycolor,-,thick] (0,0.5) to (1,0.5) to[out=up,in=right] (0.5,0.95) to[out=left,in=up] (0,0.5);
        \draw[-,thick] (1,1) to[out=down,in=up] (1.2,0.5) to[out=down,in=up] (0,0);
        \draw[-,thick] (0,1) to[out=down,in=up] (-0.4,0.5) to[out=down,in=down] (0.2,0.5);
\end{tikzpicture}
~=~
\begin{tikzpicture}[baseline = 12pt]
    \draw \foreach \m in {0,1} {
        (\m,0) node[circle, draw, fill=black,inner sep=0pt, minimum width=4pt]{}
        (\m,1) node[circle, draw, fill=black,inner sep=0pt, minimum width=4pt]{}
        };
        \draw[-,thick] (0,0) to[out=up,in=down] (0.2,0.5);
        \filldraw[fill=\jellycolor,-,thick] (0,0.5) to (1,0.5) to[out=up,in=right] (0.5,0.95) to[out=left,in=up] (0,0.5);
        \draw[-,thick] (1,1) to[out=down,in=up] (1.4,0.5) to[out=down,in=down] (0.8,0.5);
        \draw[-,thick] (0,1) to[out=down,in=up] (-0.2,0.5) to[out=down,in=up] (1,0);
\end{tikzpicture}
~-~
\begin{tikzpicture}[baseline = 12pt]
    \draw \foreach \m in {0,1} {
        (\m,0) node[circle, draw, fill=black,inner sep=0pt, minimum width=4pt]{}
        (\m,1) node[circle, draw, fill=black,inner sep=0pt, minimum width=4pt]{}
        };
        \draw[-,thick] (0,1) to[out=-45,in=-135] (1,1);
        \filldraw[fill=\jellycolor,-,thick] (0,0.25) to (1,0.25) to[out=up,in=right] (0.5,0.7) to[out=left,in=up] (0,0.25);
        \draw[-,thick] (0,0) to[out=up,in=down] (0.2,0.25);
        \draw[-,thick] (1,0) to[out=up,in=down] (0.8,0.25);
\end{tikzpicture}
\end{equation}
\end{example}

\begin{remark}\label{Grood remark} 
Let us compare the previous example with \cite[Example 3.1]{Grood}, where it is shown that even Brauer algebras are not associative. Grood sets
\[a=~\begin{tikzpicture}[baseline = 8pt, scale=.75]
        \draw[-,thick] (1,1) to (1,0);
        \draw \foreach \m in {0,1} {
            (\m,0) node[circle, draw, fill=white, thick, inner sep=0pt, minimum width=4pt]{}
            (\m,1) node[circle, draw, fill=white, thick, inner sep=0pt, minimum width=4pt]{}
            };
    \end{tikzpicture}
    \quad\text{and}\quad b=~
    \begin{tikzpicture}[baseline = 8pt, scale=.75]
        \draw[-,thick] (0,1) to (0,0);
        \draw \foreach \m in {0,1} {
            (\m,0) node[circle, draw, fill=white, thick, inner sep=0pt, minimum width=4pt]{}
            (\m,1) node[circle, draw, fill=white, thick, inner sep=0pt, minimum width=4pt]{}
            };
    \end{tikzpicture}
\]
Note that adding jellyfish to $a$ and $b$ in the manner prescribed in \S\ref{jellyfish Brauer} results in $\alpha$ and $\beta$ respectively. Grood computes\footnote{Grood uses the convention that $ab$ is obtained by stacking $a$ below $b$.} 
\[(ab)a=~\begin{tikzpicture}[baseline = 8pt, scale=0.75]
        \draw[-,thick] (0,0) to[out=up,in=up] (1,0);
        \draw \foreach \m in {0,1} {
            (\m,0) node[circle, draw, fill=white, thick, inner sep=0pt, minimum width=4pt]{}
            (\m,1) node[circle, draw, fill=white, thick, inner sep=0pt, minimum width=4pt]{}
            };
    \end{tikzpicture}
    ~-~
    \begin{tikzpicture}[baseline = 8pt, scale=0.75]
        \draw[-,thick] (0,0) to[out=up,in=down] (1,1);
        \draw \foreach \m in {0,1} {
            (\m,0) node[circle, draw, fill=white, thick, inner sep=0pt, minimum width=4pt]{}
            (\m,1) node[circle, draw, fill=white, thick, inner sep=0pt, minimum width=4pt]{}
            };
    \end{tikzpicture}
    \quad\text{and}\quad
    a(ba)=~-~\begin{tikzpicture}[baseline = 8pt, scale=0.75]
        \draw[-,thick] (0,1) to[out=down,in=up] (1,0);
        \draw \foreach \m in {0,1} {
            (\m,0) node[circle, draw, fill=white, thick, inner sep=0pt, minimum width=4pt]{}
            (\m,1) node[circle, draw, fill=white, thick, inner sep=0pt, minimum width=4pt]{}
            };
    \end{tikzpicture}
    ~-~\begin{tikzpicture}[baseline = 8pt, scale=0.75]
        \draw[-,thick] (0,1) to[out=down,in=down] (1,1);
        \draw \foreach \m in {0,1} {
            (\m,0) node[circle, draw, fill=white, thick, inner sep=0pt, minimum width=4pt]{}
            (\m,1) node[circle, draw, fill=white, thick, inner sep=0pt, minimum width=4pt]{}
            };
    \end{tikzpicture}
\]
Adding jellyfish to the diagrams above and uncrossing a pair of legs results in the right two expressions for $\alpha\circ\beta\circ\alpha$ in (\ref{Grood example}).  
\end{remark}

\section{Orbits for $A_n$ and the fullness of $\Psi$}\label{Psi}

In this section we use jellyfish diagrams to classify the $A_n$-orbits of our basis for $V^{\otimes k}$. As a consequence of the manner in which those orbits are classified, we can prove that $\Psi$ is full (Theorem \ref{Psi is full}). We also get a description of the dimensions of the spaces $\Hom_{A_n}(V^{\otimes k},V^{\otimes \ell})$ in terms of Bell and Stirling numbers in \S\ref{counting dimensions}.

\subsection{$A_n$-orbits}\label{subsection: A-orbits} 
Let $D:k\to0$ denote a partition diagram with at most $n$ parts and recall the definition of the associated $S_n$-orbit $O_D$ from \S\ref{repn Sn}. Suppose for a moment that $D$ has at most $n-2$ parts. Then for any $v_{\bd i}\in D$, there exist distinct $a,b\in\{1,\ldots,n\}$ such that the transposition $(a,b)$ fixes $v_{\bd i}$. In particular, $\sigma(a,b)\cdot v_{\bd i}=\sigma\cdot v_{\bd i}$ for every $\sigma\in S_n$. It follows that the $S_n$-orbit $O_D$ is also the $A_n$-orbit of $v_{\bd i}$. On the other hand, if $D$ has $n-1$ or $n$ parts, then the action of $S_n$ on $O_D$ is faithful. Indeed, this follows from the fact that a permutation of $\{1,\ldots,n\}$ is completely determined by its action on $n-1$ of the elements in $\{1,\ldots,n\}$. Since $A_n$ is a subgroup of $S_n$ of index 2, it follows that the $S_n$-orbit $O_D$ is the disjoint union of two $A_n$-orbits whenever $D$ has $n-1$ or $n$ parts. We will now use jellyfish to distinguish between those two $A_n$-orbits.

To each partition diagram $D:k\to 0$ with $n$ or $n-1$ parts we associate a jellyfish diagram $\jelly_D:k\to0$ in $\JP(n)$ as follows. First, mark the leftmost vertex in each part of $D$. Now place a jellyfish above $D$. In the case that $D$ has $n-1$ parts make the rightmost leg of the jellyfish dangling. Attach all of the other jellyfish legs to the marked vertices of $D$ in such a way that no two legs are crossing. For example, suppose  
\[
    D=
    \begin{tikzpicture}[baseline = 2pt, scale=0.75]
        \draw \foreach \m in {0,...,8} {
        (0.5*\m,0) node[circle, draw, fill=black]{}
        };
        \draw[-,thick] (0,0) to[out=up,in=up] (1.5,0) to[out=up,in=up] (2.5,0);
        \draw[-,thick] (1,0) to[out=up,in=up] (3,0) to[out=up,in=up] (4,0);
        \draw[-,thick] (2,0) to[out=up,in=up] (3.5,0);
    \end{tikzpicture}
    \quad\text{and}\quad
    D'=
    \begin{tikzpicture}[baseline = 2pt, scale=0.75]
        \draw \foreach \m in {0,...,7} {
        (0.5*\m,0) node[circle, draw, fill=black]{}
        };
        \draw[-,thick] (0,0) to[out=up,in=up] (1.5,0);
        \draw[-,thick] (0.5,0) to[out=up,in=up] (1,0) to[out=up,in=up] (2.5,0) to[out=up,in=up] (3,0);
        \draw[-,thick] (2,0) to[out=up,in=up] (3.5,0);
    \end{tikzpicture}
\]
If $n=4$, then the corresponding jellyfish diagrams are 
\[
    \jelly_D=
    \begin{tikzpicture}[baseline = 2pt, scale=0.75]
        \draw \foreach \m in {0,...,8} {
        (0.5*\m,0) node[circle, draw, fill=black]{}
        };
        \draw[-,thick] (0,0) to[out=up,in=up] (1.5,0) to[out=up,in=up] (2.5,0);
        \draw[-,thick] (1,0) to[out=up,in=up] (3,0) to[out=up,in=up] (4,0);
        \draw[-,thick] (2,0) to[out=up,in=up] (3.5,0);
        \draw[-,thick] (1.3, 1.5) to[out=down,in=up] (2,0);
        \draw[-,thick] (1.1,1.5) to[out=down,in=up] (1,0);
        \draw[-,thick] (0.9,1.5) to[out=down,in=up] (0.5,0);
        \draw[-,thick] (0.7,1.5) to[out=down,in=up] (0,0);
        \filldraw[fill=\jellycolor,-,thick] (0.5,1.5) to (1.5,1.5) to[out=up, in=right] (1,2) to[out=left, in=up] (0.5,1.5);
    \end{tikzpicture}
    \quad\text{and}\quad
    \jelly_{D'}=
    \begin{tikzpicture}[baseline = 2pt, scale=0.75]
        \draw \foreach \m in {0,...,7} {
        (0.5*\m,0) node[circle, draw, fill=black]{}
        };
        \draw[-,thick] (0,0) to[out=up,in=up] (1.5,0);
        \draw[-,thick] (0.5,0) to[out=up,in=up] (1,0) to[out=up,in=up] (2.5,0) to[out=up,in=up] (3,0);
        \draw[-,thick] (2,0) to[out=up,in=up] (3.5,0);
        \draw[-,thick] (1.9, 1.5) to[out=-135,in=135] (1.9,1.1) to[out=-45,in=45] (1.9,0.95);
        \draw[-,thick] (1.6,1.5) to[out=down,in=up] (2,0);
        \draw[-,thick] (1.4,1.5) to[out=down,in=up] (0.5,0);
        \draw[-,thick] (1.15,1.5) to[out=down,in=up] (0,0);
        \filldraw[fill=\jellycolor,-,thick] (1,1.5) to (2,1.5) to[out=up, in=right] (1.5,2) to[out=left, in=up] (1,1.5);
    \end{tikzpicture}
\]

\begin{proposition}\label{jelly-D} 
Suppose $D:k\to 0$ is a partition diagram with $n$ or $n-1$ parts and fix a basis vector $v_{\bd i}$ in $V^{\otimes k}$.
    
    (1) $\Psi(\jelly_D)(\sigma\cdot v_{\bd i})=(-1)^\sigma\Psi(\jelly_D)(v_{\bd i})$ for any $\sigma\in S_n$.
    
    (2)
    $\Psi(\jelly_D):v_{\bd i}\mapsto
    \begin{cases}
        \pm 1, & \text{if }v_{\bd i}\in O_D;\\
        0, & \text{otherwise}.
    \end{cases}$
\end{proposition}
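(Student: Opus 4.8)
The plan is to recognize $\jelly_D$ as the composite of the jellyfish $\jelly$ with an ordinary partition diagram, and then to read off $\Psi(\jelly_D)$ on basis vectors from $\Psi(\jelly)=\det$ together with the explicit description of $\Psi$ on partition diagrams recalled in \S\ref{subsection: the functor Phi}. Write $P_1,\dots,P_m$ for the parts of $D$, with $m\in\{n-1,n\}$, ordered by their leftmost vertex, and let $G:k\to n$ be the partition diagram whose parts are $P_1\cup\{1'\},\dots,P_m\cup\{m'\}$, together with the singleton $\{n'\}$ in the case $m=n-1$. Since attaching a jellyfish leg to any vertex of a given connected component produces the same morphism of $\JP(n)$ (as observed in \S\ref{JP diagrams}) and a dangling leg is an instance of $\un$, one checks directly that $\jelly_D=\jelly\circ G$, hence $\Psi(\jelly_D)=\det\circ\Psi(G)$.

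Granting this, part (2) is a short computation. Fix $v_{\bd i}$. From the description of $\Psi$ on partition diagrams, $\Psi(G)(v_{\bd i})=0$ unless the label $i_\ell$ is constant on each part $P_r$; if it is, say with value $a_r$ on $P_r$, then $\Psi(G)(v_{\bd i})=v_{a_1}\otimes\cdots\otimes v_{a_n}$ when $m=n$, and $\Psi(G)(v_{\bd i})=\sum_{b=1}^{n}v_{a_1}\otimes\cdots\otimes v_{a_{n-1}}\otimes v_b$ when $m=n-1$. Applying $\det$ and using that $\det[v_{j_1}\cdots v_{j_n}]$ equals $\pm1$ when $j_1,\dots,j_n$ are pairwise distinct and $0$ otherwise: if $m=n$ the result is $\pm1$ precisely when $a_1,\dots,a_n$ are distinct; if $m=n-1$ the sum collapses to its unique possibly-nonzero term, namely the one with $b$ the index missing from $\{a_1,\dots,a_{n-1}\}$, and is $\pm1$ precisely when $a_1,\dots,a_{n-1}$ are distinct. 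In both cases the condition ``$i_\ell$ is constant on the parts of $D$ with pairwise distinct values across parts'' is exactly $v_{\bd i}\in O_D$, which is part (2).

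Part (1) then follows formally. Since $G\in\P(n)$ and $\Psi$ restricted to $\P(n)$ agrees with $\Phi$, the map $\Psi(G)$ is $S_n$-equivariant, so $\Psi(G)(\sigma\cdot v_{\bd i})=\sigma\cdot\Psi(G)(v_{\bd i})$ for all $\sigma\in S_n$. Extending (\ref{det property}) by linearity in the vector argument gives $\det(\sigma\cdot w)=(-1)^\sigma\det(w)$ for every $w\in V^{\otimes n}$. Combining these, $\Psi(\jelly_D)(\sigma\cdot v_{\bd i})=\det\bigl(\sigma\cdot\Psi(G)(v_{\bd i})\bigr)=(-1)^\sigma\det\bigl(\Psi(G)(v_{\bd i})\bigr)=(-1)^\sigma\Psi(\jelly_D)(v_{\bd i})$, as required; note this argument makes no use of whether or not $v_{\bd i}\in O_D$.

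There is no serious obstacle here: the computation is routine once the setup is in place. The two points needing care are the identification $\jelly_D=\jelly\circ G$ --- in particular treating the dangling leg as a free top vertex of $G$, and checking that the non-crossing requirement on the legs affects only the overall sign in part (2), which is immaterial for the statement --- and the collapse of the summation in the $m=n-1$ case. It is also worth remarking that the precise sign $\pm 1$ in (2) depends on the chosen non-crossing attachment of the legs (and on $v_{\bd i}$), but it need only be pinned down later where it is actually used.
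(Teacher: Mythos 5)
Your proof is correct and follows essentially the same route as the paper: the paper likewise factors $\jelly_D$ as $\det\circ\Phi(\hat D)$, where $\hat D$ (your $G$) is the partition diagram obtained by cutting the jellyfish legs just below the body, deduces (1) from $S_n$-equivariance of $\Phi(\hat D)$ together with the skew-symmetry of $\det$, and proves (2) by the same computation, merely organized as a case analysis on how the diagram $D'$ with $v_{\bd i}\in O_{D'}$ compares to $D$ in the partial order rather than by your explicit evaluation of $\Psi(G)(v_{\bd i})$.
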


\begin{proof}
    Let $\hat{D}$ denote the partition diagram of type $k\to n$ obtained from $\jelly_D$ by cutting the jellyfish legs just below the body. Then we have $\Psi(\jelly_D)=\det\circ\Phi(\hat{D})$.  Now (1) follows from the fact that $\Phi(\hat{D})$ commutes with the action of $S_n$ along with (\ref{det property}). 

    To prove (2) let $D':k\to0$ denote the partition diagram with $v_{\bd i}\in O_{D'}$. If $D'\not\geq D$, then $\Phi(\hat{D})(v_{\bd i})=0$. If $D'\gneqq D$, then the terms appearing in $\Phi(\hat{D})(v_{\bd i})$ will all have the form $v_{\bd i'}$ with at least two of $i'_1,\ldots,i'_n$ equal, and $\det(v_{\bd i'})=0$ for all such ${\bd i'}$. Finally, if $D'=D$ then there will be a unique term of $\Phi(\hat{D})(v_{\bd i})$ of the form $v_{\bd i'}$ with $i'_1,\ldots,i'_n$ pairwise distinct, and $\det(v_{\bd i'})=\pm 1$ for such an ${\bd i'}$.  
\end{proof}

Given a partition diagram $D:k\to 0$ having $n$ or $n-1$ parts we set 
\[
    O_D^+=\{v_{\bd i}~|~j_D(v_{\bd i})=1\},
    \quad
    O_D^-=\{v_{\bd i}~|~j_D(v_{\bd i})=-1\}.
\]
It follows from part (2) of Proposition \ref{jelly-D} that $O_D=O_D^+\cup O_D^-$. Moreover, it follows from part (1) of Proposition \ref{jelly-D} that $O_D^+$ and $O_D^-$ are precisely the two $A_n$-orbits inside $O_D$. Let $f_D^\pm:V^{\otimes k}\to\unit$ denote the $\k$-linear maps defined by 
\[
    f_D^+(v_{\bd i})=
    \begin{cases}
        1, & \text{if }v_{\bd i}\in O_D^+;\\
        0, & \text{if }v_{\bd i}\not\in O_D^+.
    \end{cases}
    \qquad
    f_D^-(v_{\bd i})=
    \begin{cases}
        1, & \text{if }v_{\bd i}\in O_D^-;\\
        0, & \text{if }v_{\bd i}\not\in O_D^-.
    \end{cases}
\]
We get the following $A_n$-analog of Proposition \ref{prop: f basis}:

\begin{proposition}\label{prop: f pm basis}
    The set $\{f_D\}_D\cup\{f^\pm_D\}_D$ is a basis for $\Hom_{A_n}(V^{\otimes k},\unit)$ where the first (resp.~second) set is indexed by all partition diagrams $D:k\to 0$ having at most $n-2$ parts (resp.~ having $n$ or $n-1$ parts).
\end{proposition}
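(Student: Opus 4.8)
The plan is to argue exactly as for the $S_n$-counterpart, Proposition~\ref{prop: f basis}, now working with $A_n$-orbits in place of $S_n$-orbits; essentially all of the real work has already been carried out in \S\ref{subsection: A-orbits} and Proposition~\ref{jelly-D}.

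First I would record the general principle. Since $A_n\subseteq S_n$ permutes the standard basis $\{v_{\bd i}\}$ of $V^{\otimes k}$, a linear functional $\varphi\colon V^{\otimes k}\to\unit$ lies in $\Hom_{A_n}(V^{\otimes k},\unit)$ precisely when its coefficient on $v_{\bd i}$ in the dual basis depends only on the $A_n$-orbit of $v_{\bd i}$. Hence $\Hom_{A_n}(V^{\otimes k},\unit)$ has, as a basis, the collection of ``orbit indicators'': for each $A_n$-orbit $\mathcal{O}$ of basis vectors, the functional sending every $v_{\bd i}\in\mathcal{O}$ to $1$ and every other basis vector to $0$. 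So the proposition reduces to identifying the $A_n$-orbits of basis vectors in $V^{\otimes k}$ and checking that the $f_D$ and $f_D^{\pm}$ are exactly these indicators.

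Next I would invoke \S\ref{subsection: A-orbits}. Every basis vector lies in a unique $O_D$ with $D\colon k\to 0$ having at most $n$ parts, and distinct partition diagrams give disjoint $O_D$ (\S\ref{repn Sn}). When $D$ has at most $n-2$ parts, $O_D$ is a single $A_n$-orbit. When $D$ has $n-1$ or $n$ parts, the $S_n$-stabilizer of any $v_{\bd i}\in O_D$ is trivial (a permutation of $\{1,\dots,n\}$ is determined by its values on the $\geq n-1$ entries that occur), hence contained in $A_n$, so $O_D$ splits into exactly two $A_n$-orbits; by parts (1) and (2) of Proposition~\ref{jelly-D} these two (nonempty) orbits are precisely $O_D^{+}$ and $O_D^{-}$, and $O_D=O_D^{+}\sqcup O_D^{-}$. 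Consequently the $A_n$-orbits of basis vectors in $V^{\otimes k}$ are exactly the $O_D$ with $D$ having at most $n-2$ parts together with the $O_D^{\pm}$ with $D$ having $n-1$ or $n$ parts, and by their definitions $f_D$, $f_D^{+}$, $f_D^{-}$ are the corresponding orbit indicators. The result then follows from the first paragraph.

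There is no real obstacle here; the only points that warrant a moment's care are bookkeeping ones: that no orbit is counted twice (which follows since distinct $D$ give disjoint $O_D$, and $O_D^{+}$, $O_D^{-}$ are disjoint and nonempty), and that the jellyfish diagram $\jelly_D$ used to define $O_D^{\pm}$ is not canonical — a different non-crossing attachment of the legs could interchange the roles of $+$ and $-$. Since the statement concerns only the \emph{collection} $\{f_D^{\pm}\}$, this ambiguity is harmless, and indeed Proposition~\ref{jelly-D} is invoked only to produce an $A_n$-equivariant functional separating the two halves of $O_D$.
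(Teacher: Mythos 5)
Your proposal is correct and matches the paper's approach: the paper states this proposition without a separate proof, presenting it as an immediate consequence of the orbit analysis in \S\ref{subsection: A-orbits} (where $O_D$ is shown to be a single $A_n$-orbit for $D$ with at most $n-2$ parts, and to split as $O_D^+\sqcup O_D^-$ via Proposition \ref{jelly-D} otherwise), combined with the standard fact that orbit indicators form a basis of the invariant functionals. Your extra remarks on disjointness and the (in fact canonical, given the leftmost-vertex marking and non-crossing condition) choice of $\jelly_D$ are harmless bookkeeping.
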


\subsection{Proof of Theorem \ref{Psi is full}}\label{proof of theorem A}
To prove that $\Psi$ is full, consider the following commutative diagram:
\begin{equation}\label{Reduce to invariants alt}
    \begin{CD}
    \Hom_{\JP(n)}(k,\ell)&@>>>&
    \Hom_{\JP(n)}(k+\ell,0)\\
    @V \Psi VV&
    &@V \Psi VV\\
    \Hom_{A_n}(V^{\otimes k}, V^{\otimes \ell})&@>>>&
    \Hom_{A_n}(V^{\otimes k+\ell}, \unit).
    \end{CD}
\end{equation}
The horizontal maps in (\ref{Reduce to invariants alt}) are the $\k$-linear isomorphisms defined in the exact same way as their counterparts in (\ref{Reduce to invariants symm}).  Since (\ref{Reduce to invariants alt}) commutes, it suffices to prove $\Hom_{\JP(n)}(k,0)\arup{\Psi}\Hom_{A_n}(V^{\otimes k},\unit)$ is surjective for each $k$. Suppose $D:k\to0$ is a partition diagram. It follows from (\ref{x to f}) and the definition of $\Psi$ that $\Psi(x_D)=f_D$. Moreover, if $D$ has $n$ or $n-1$ parts, it follows from part (2) of Proposition \ref{jelly-D} that $\Psi:\frac{1}{2}(x_D\pm \jelly_D)\mapsto f_D^\pm$. Hence, we are done by Proposition \ref{prop: f pm basis}. \qed

\subsection{Dimension formulae}\label{counting dimensions} The so-called \emph{Stirling number of the second kind}, denoted $\Stirling(k,p)$, is the number of partitions of a set with $k$ elements into $p$ mutually disjoint nonempty subsets. Note that $\Stirling(k,p)=0$ whenever $k<p$, $\Stirling(k,0)=0$ whenever $k>0$, and (by convention) $\Stirling(0,0)=1$. The \emph{Bell number} $\Bell(k)=\sum_{p=0}^k\Stirling(k,p)$ is the total number of set partitions of a set with $k$ elements.  In particular, by the definition of $\P(n)$ we have 
\begin{equation*}
    \dim_\k\Hom_{\P(n)}(k,\ell)=\Bell(k+\ell).
\end{equation*}
It follows from the bottom isomorphism in (\ref{Reduce to invariants symm}) along with Proposition \ref{prop: f basis} that 
\begin{equation*}
    \dim_\k\Hom_{S_n}(V^{\otimes k},V^{\otimes \ell})=\sum\limits_{p=0}^{n}\Stirling(k+\ell,p).
\end{equation*}
In particular, $\dim_\k\Hom_{S_n}(V^{\otimes k},V^{\otimes \ell})=\Bell(k+\ell)$ if and only if $k+\ell\leq n$.
Similarly, it follows from the bottom isomorphism in (\ref{Reduce to invariants alt}) along with Proposition \ref{prop: f pm basis} that 
\begin{equation}\label{dimensions for A}
\dim_\k\Hom_{A_n}(V^{\otimes k},V^{\otimes \ell})=\sum\limits_{p=0}^{n-2}\Stirling(k+\ell,p)+2\Stirling(k+\ell,n-1)+2\Stirling(k+\ell,n).
\end{equation}
In particular, 
\begin{equation*}\label{special case dimensions}
    \dim_\k\Hom_{A_n}(V^{\otimes k},V^{\otimes \ell})=
    \begin{cases}
        \Bell(k+\ell), & \text{if }k+\ell\leq n-2;\\
        \Bell(n-1)+1, & \text{if }k+\ell=n-1;\\
        \Bell(n)+\Stirling(n,n-1)+1, & \text{if }k+\ell=n.\\
    \end{cases}
\end{equation*}
In the special case when $k=\ell$ the dimension formulae above can be found in \cite{Bloss}.

By Theorem \ref{Psi is full} we know $\dim_\k\Hom_{\JP(n)}(k,\ell)\geq\dim_\k\Hom_{A_n}(V^{\otimes k}, V^{\otimes \ell})$. 
In \S\ref{section: Psi is faithful} we will show that under certain constraints on $\operatorname{char}\k$ the reverse inequality also holds. We will do so by showing morphisms in $\JP(n)$ are spanned by a set of size (\ref{dimensions for A}).  In particular, this will imply that $\Psi$ is also faithful.

\section{The faithfulness of $\Psi$}\label{section: Psi is faithful}

From now on we assume $n>1$ and that $\operatorname{char} \k=0$ or $\operatorname{char}\k\geq n$. In this section we first prove three lemmas which result in a description of a basis for $\Hom_{\JP(n)}(k,0)$ (see Proposition \ref{basis for JP}). As a consequence we obtain a proof of Theorem \ref{Psi is faithful} in \S\ref{proof of main result}.

\subsection{Spanning sets for jellyfish diagrams}\label{subsection: Spanning sets for jellyfish diagrams}

For this subsection let $Y:n+1\to 0$ denote the unique partition diagram with $n+1$ parts. In other words, 
\[
    Y=~~
    \begin{tikzpicture}[baseline = 2pt, scale=0.75]
        \draw \foreach \m in {0,1,2,4} {
        (\m,0) node[circle, draw, fill=black]{}};
        \draw (3,0) node{$\cdots$};
        \draw (0,-0.4) node{1};
        \draw (1,-0.4) node{2};
        \draw (2,-0.4) node{3};
        \draw (4,-0.4) node{$n+1$};
    \end{tikzpicture}
\]
The following lemma and its proof generalize Example \ref{example p dependent}, the $n=2$ case.

\begin{lemma}\label{reduce Y}
The partition diagram 
$Y$ defined above is equal to a linear combination of partition diagrams each having at most $n$ parts in $\JP(n)$.  
\end{lemma}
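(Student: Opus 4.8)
The plan is to mimic and generalize the $n=2$ computation carried out in Example~\ref{example p dependent}, where a specific jellyfish diagram $D$ was simplified in two different ways using the jellyfish relation~(\ref{jellyfish relation}), and the two answers were subtracted to produce a linear dependence among partition diagrams. Concretely, I would build a jellyfish diagram $D \colon n+1 \to 0$ consisting of a ``double jellyfish'' gadget attached to the $n+1$ vertices: take $\jelly^* \circ \jelly$ (which by the Remark after Example~\ref{example p dependent} is a nonzero scalar multiple of the sign idempotent $\sum_{\sigma}(-1)^\sigma \sigma$ on $n$ strands), feed $n$ of the $n+1$ input vertices into it, and handle the remaining vertex by an extra jellyfish/dangling-leg configuration chosen so that, on the one hand, $D$ simplifies via~(\ref{jellyfish relation}) to a combination of partition diagrams with at most $n$ parts, and on the other hand, a \emph{different} grouping of the jellyfish pairs in $D$ produces an expression whose leading term is $Y$ itself plus partition diagrams with strictly fewer parts.

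The key steps, in order: (1) Write down the explicit diagram $D$ — the natural choice is to merge two vertices of $Y$ into a single part via a cap, apply a pair of jellyfish above and a pair below (as in the $n=2$ picture, but now with $n$-legged jellyfish and one merged pair so the whole thing lives on $n+1$ bottom vertices), leaving enough ``slack'' vertices so the resulting top/bottom vertices are exactly the $n+1$ we want. (2) Apply the jellyfish relation~(\ref{jellyfish relation}) to the top pair of jellyfish and separately to the bottom pair: each application turns a pair of jellyfish into $\sum_{\sigma\in S_n}(-1)^\sigma$ (a permutation-and-crosscap diagram), so after both applications $D$ becomes a $\Z$-linear combination of \emph{partition diagrams}, and a parts-count argument (crosscap diagrams have few parts; composing with a partition diagram only coarsens) shows every term has at most $n$ parts. (3) Apply~(\ref{jellyfish relation}) instead to the ``left'' pair and the ``right'' pair of jellyfish; track the leading term carefully to see that $Y$ appears with coefficient a nonzero scalar (here is where $\operatorname{char}\k = 0$ or $\geq n$ enters: the relevant scalar is something like $\pm 2$ or a small factorial-type constant coming from how many $\sigma$ contribute the identity-pattern term, and we need it to be invertible — this is exactly the hypothesis on the characteristic), with all other terms being partition diagrams of at most $n$ parts. (4) Subtract the two expressions for $D$: the left side cancels, and solving for $Y$ expresses $Y$ as a linear combination of partition diagrams with at most $n$ parts.

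The main obstacle I anticipate is bookkeeping in step~(3): verifying that $Y$ really does occur, and with a controlled nonzero coefficient, requires carefully matching up which permutations $\sigma$ contribute which connectivity pattern when the crosscap diagrams are composed with the partition-diagram part of $D$. In the $n=2$ case this was a short enumeration (eight terms collapsing to $x_Y$); for general $n$ the number of terms is $(n!)^2$, so one needs a clean combinatorial argument — most likely: ``all terms have $\leq n$ parts \emph{except} those coming from the unique pair of patterns that keep all $n+1$ vertices separate, and those assemble to a nonzero multiple of $Y$.'' I would isolate this as the technical heart of the proof and present it via the orbit/partition-diagram language already set up (comparing connected components), rather than a brute-force sum. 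The two functoriality facts — that $\Phi(x_Y) = 0$ because $Y$ has $n+1 > n$ parts (Theorem~\ref{thm: Phi is full and kernel}), and that the kernel of $\P(n)\to\JP(n)$ equals $\ker\Phi$ — serve as a sanity check that such a relation \emph{must} exist, but the lemma needs the explicit construction, so the jellyfish computation is unavoidable.
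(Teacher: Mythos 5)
Your plan is essentially the paper's own proof: the paper takes exactly the four-jellyfish diagram generalizing Example~\ref{example p dependent} (two lower jellyfish whose $2n$ legs are attached to the $n+1$ vertices with the middle $n-1$ vertices each receiving one leg from each, and two upper jellyfish joined by caps with one dangling leg apiece), applies the jellyfish relation to the top/bottom pairs to get only diagrams with at most $n$ parts, applies it to the left/right pairs to get $(n-1)!\,Y$ plus diagrams with at most $n$ parts, and solves for $Y$ using that $(n-1)!$ is invertible under the characteristic hypothesis. The bookkeeping you flag as the technical heart is resolved exactly as you anticipate — the only $(\sigma,\tau)$ pairs keeping all $n+1$ vertices separate are the $(n-1)!$ matched pairs, each contributing sign $+1$ — so aside from the slightly off description of the diagram (the middle vertices receive two jellyfish legs each rather than being merged by a cap), your proposal matches the paper.
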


\begin{proof} Consider the following jellyfish diagram:
\begin{equation}\label{4 jellies}
    \begin{tikzpicture}[baseline = 12pt]
        \draw[-,thick]  \foreach \m in {0.25,1.75,2.75,4.25} {
            (\m,0-0.8) node[circle, draw, fill=black,inner sep=0pt, minimum width=4pt]{}
            };
        \filldraw[fill=\jellycolor,-,thick] (0,0.75-0.8) to (2,0.75-0.8) to[out=up, in=right] (1,1.4-0.8) to[out=left, in=up] (0,0.75-0.8);
        \filldraw[fill=\jellycolor,-,thick] (2.5,0.75-0.8) to (4.5,0.75-0.8) to[out=up, in=right] (3.5,1.4-0.8) to[out=left, in=up] (2.5,0.75-0.8);
        \filldraw[fill=\jellycolor,-,thick] (0,2.15-0.8) to (2,2.15-0.8) to[out=down, in=right] (1,1.5-0.8) to[out=left, in=down] (0,2.15-0.8);
        \filldraw[fill=\jellycolor,-,thick] (2.5,2.15-0.8) to (4.5,2.15-0.8) to[out=down, in=right] (3.5,1.5-0.8) to[out=left, in=down] (2.5,2.15-0.8);
        \draw[-,thick] (0.25,0-0.8) to (0.25,0.75-0.8);
        \draw[-,thick] (4.25,0-0.8) to (4.25,0.75-0.8);
        \draw[-,thick] (0.75,0.75-0.8) to (1.75,0-0.8) to (2.75,0.75-0.8);
        \draw[-,thick] (1.75,0.75-0.8) to (2.75,0-0.8) to (3.75,0.75-0.8);
        
        \draw[-,thick] (0.25,1.35) to[out=up,in=up] (3.25,1.35);
        \draw[-,thick] (1.25,1.35) to[out=up,in=up] (4.25,1.35);
        \draw[-,thick] (1.75, 1.35) to[out=45,in=-45] (1.75,1.55) to[out=135,in=-135] (1.75,1.75);
        \draw[-,thick] (2.75, 1.35) to[out=45,in=-45] (2.75,1.55) to[out=135,in=-135] (2.75,1.75);
        \draw (0.75,1.45) node{$\cdots$};
        \draw (3.75,1.45) node{$\cdots$};

        \draw (2.25,0-0.8) node{$\cdots$};
        \draw (1.35,0.65-0.8) node{$\cdots$};
        \draw (3.15,0.65-0.8) node{$\cdots$};
        \draw (0.25,-0.3-0.8) node{$1$};
        \draw (1.75,-0.3-0.8) node{$2$};
        \draw (2.75,-0.33-0.8) node{$n$};
        \draw (4.25,-0.33-0.8) node{$n+1$};
    \end{tikzpicture}
\end{equation}
Applying the jellyfish relation to the left and right pair of jellyfish respectively results in the following:
\[
    \sum_{\sigma,\tau\in S_n}(-1)^\sigma(-1)^\tau~
    \begin{tikzpicture}[baseline = 12pt]
        \draw[-,thick]  \foreach \m in {0.25,1.75,2.75,4.25} {
            (\m,0-0.6) node[circle, draw, fill=black,inner sep=0pt, minimum width=4pt]{}
            };
        \draw[-,thick] (0,0.75-0.6) to (2,0.75-0.6) to (2, 1.4-0.6) to  (0,1.4-0.6) to (0,0.75-0.6);
        \draw (1,1.075-0.6) node{$\sigma$};
        \draw[-,thick] (2.5,0.75-0.6) to (4.5,0.75-0.6) to (4.5, 1.4-0.6) to  (2.5,1.4-0.6) to (2.5,0.75-0.6);
        \draw (3.5,1.075-0.6) node{$\tau$};
        \draw[-,thick] (0.25,0-0.6) to (0.25,0.75-0.6);
        \draw[-,thick] (4.25,0-0.6) to (4.25,0.75-0.6);
        \draw[-,thick] (0.75,0.75-0.6) to (1.75,0-0.6) to (2.75,0.75-0.6);
        \draw[-,thick] (1.75,0.75-0.6) to (2.75,0-0.6) to (3.75,0.75-0.6);
        
        \draw[-,thick] (0.25, 0.8) to[out=45,in=-45] (0.25,1) to[out=135,in=-135] (0.25,1.2);
        \draw[-,thick] (4.25, 0.8) to[out=45,in=-45] (4.25,1) to[out=135,in=-135] (4.25,1.2);
        \draw[-,thick] (0.75, 0.8) to[out=up, in=up] (2.75,0.8);
        \draw[-,thick] (1.75, 0.8) to[out=up, in=up] (3.75,0.8);
        \draw (1.3,0.9) node{$\cdots$};
        \draw (3.25,0.9) node{$\cdots$};

        \draw (2.25,0-0.6) node{$\cdots$};
        \draw (1.35,0.65-0.6) node{$\cdots$};
        \draw (3.15,0.65-0.6) node{$\cdots$};
        \draw (0.25,-0.3-0.6) node{$1$};
        \draw (1.75,-0.3-0.6) node{$2$};
        \draw (2.75,-0.33-0.6) node{$n$};
        \draw (4.25,-0.33-0.6) node{$n+1$};
    \end{tikzpicture}
\]
The terms in the sum above with partition diagram having more than $n$ parts are precisely 
\[
    \sum_{\sigma\in S_{n-1}}~
    \begin{tikzpicture}[baseline = 12pt]
        \draw[-,thick]  \foreach \m in {0.25,1.75,2.75,4.25} {
            (\m,0-0.6) node[circle, draw, fill=black,inner sep=0pt, minimum width=4pt]{}
            };
        \draw[-,thick] (0.5,0.75-0.6) to (2,0.75-0.6) to (2, 1.4-0.6) to  (0.5,1.4-0.6) to (0.5,0.75-0.6);
        \draw (1.25,1.075-0.6) node{$\sigma$};
        \draw[-,thick] (2.5,0.75-0.6) to (4,0.75-0.6) to (4, 1.4-0.6) to  (2.5,1.4-0.6) to (2.5,0.75-0.6);
        \draw (3.25,1.075-0.6) node{$\sigma$};
        \draw[-,thick] (0.75,1.4-0.6) to[out=up,in=up] (2.75,1.4-0.6);
        \draw[-,thick] (1.75,1.4-0.6) to[out=up,in=up] (3.75,1.4-0.6);
        \draw[-,thick] (0.25,0-0.6) to (0.25,1.4-0.6);
        \draw[-,thick] (4.25,0-0.6) to (4.25,1.4-0.6);
        \draw[-,thick] (0.75,0.75-0.6) to (1.75,0-0.6) to (2.75,0.75-0.6);
        \draw[-,thick] (1.75,0.75-0.6) to (2.75,0-0.6) to (3.75,0.75-0.6);

        \draw[-,thick] (0.25, 1.4-0.6) to[out=45,in=-45] (0.25,1.6-0.6) to[out=135,in=-135] (0.25,1.8-0.6);
        \draw[-,thick] (4.25, 1.4-0.6) to[out=45,in=-45] (4.25,1.6-0.6) to[out=135,in=-135] (4.25,1.8-0.6);
        \draw (2.25,0-0.6) node{$\cdots$};
        \draw (1.35,0.65-0.6) node{$\cdots$};
        \draw (3.15,0.65-0.6) node{$\cdots$};
        \draw (1.25,1.5-0.6) node{$\cdots$};
        \draw (3.25,1.5-0.6) node{$\cdots$};
        \draw (0.25,-0.3-0.6) node{$1$};
        \draw (1.75,-0.3-0.6) node{$2$};
        \draw (2.75,-0.33-0.6) node{$n$};
        \draw (4.25,-0.33-0.6) node{$n+1$};
    \end{tikzpicture}
    =(n-1)! Y.
\]
On the other hand, applying the jellyfish relation to the top and bottom pair of jellyfish in (\ref{4 jellies}) respectively results only in partition diagrams having at most $n$ parts. Since $(n-1)!$ is not divisible by $\operatorname{char}\k$, we can solve for $Y$ in terms of partition diagrams each having at most $n$ parts.  
\end{proof}

\begin{lemma}\label{span partition diagrams}
In $\JP(n)$ every partition diagram is equal to a linear combination of partition diagrams each having at most $n$ parts.
\end{lemma}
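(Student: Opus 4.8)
The plan is to reduce Lemma~\ref{span partition diagrams} to Lemma~\ref{reduce Y} by induction on the number of parts. Suppose $D:k\to\ell$ is a partition diagram with more than $n$ parts; we want to write it as a linear combination of partition diagrams with strictly fewer parts (and then iterate). Using the isomorphism $\Hom_{\P(n)}(k,\ell)\cong\Hom_{\P(n)}(k+\ell,0)$ from the proof of Theorem~\ref{thm: Phi is full and kernel} (which preserves the number of parts and is realized by composing with cups and caps in $\JP(n)$), it suffices to treat the case $\ell=0$, so $D:k\to 0$ has $p\geq n+1$ parts.

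First I would isolate $n+1$ of the parts of $D$ and show that $D$ factors through $Y$. More precisely, choose one vertex from each of $n+1$ distinct parts of $D$; after applying permutation diagrams (which are invertible in $\JP(n)$) we may assume these are vertices $1,\dots,n+1$. Then $D$ can be written as a composite $D = D'' \circ (Y^{\mathrm{op}}\otimes \text{(rest)})$ — more carefully, $D$ equals the partition diagram obtained by first grouping vertices $1,\dots,n+1$ into $n+1$ singleton outputs via a diagram built from $\comult$'s and then feeding those through the rest of $D$; the key point is that $D$ equals $E\circ(Y^{*}\otimes 1_{?})$ for a suitable partition diagram $E$ where $Y^{*}:0\to n+1$ is the ``co-$Y$'' diagram, equivalently $D = E \circ_{\text{(partial)}} Y$ via planar composition. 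The cleanest formulation: since the parts of $D$ refine into $n+1$ chosen parts, $D$ is the image under $\Hom_{\P(n)}(-,0)$-functoriality of $Y$ precomposed/postcomposed with partition diagrams, so that replacing $Y$ by a linear combination of partition diagrams with $\leq n$ parts (Lemma~\ref{reduce Y}) yields, after composing, a linear combination of partition diagrams $D_i$ with strictly fewer than $p$ parts each (composition with partition diagrams can only merge parts, never create them, so each $D_i$ has at most $p-1$ parts).

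Then I would finish by downward induction on $p$: the base case $p\leq n$ is vacuous, and the inductive step is exactly the reduction above. Since $\Hom_{\JP(n)}(k,0)$ lives inside $\Hom_{\P(n)}(k,0)$ as a quotient, and every partition diagram with $\leq n$ parts is already in our target spanning set, this closes the induction.

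The main obstacle is the bookkeeping in the reduction step: making precise the claim that $D$ ``factors through $Y$'' and that applying Lemma~\ref{reduce Y} inside this factorization genuinely drops the part count. The subtle point is that when we write $D = E \circ \widetilde{Y}$ (with $\widetilde Y$ a rearranged/tensored version of $Y$ and $E$ a partition diagram), composition $E\circ(-)$ with a partition diagram is a monotone operation on set partitions — it can only coarsen — so replacing $\widetilde Y$ by diagrams with fewer parts forces $E\circ(\text{those})$ to have fewer parts than $E\circ\widetilde Y = D$. One must check that the number of parts of $E\circ\widetilde{Y'}$ is bounded by $(\text{parts of }\widetilde{Y'}) + (\text{something independent of }Y)$ in a way that is strictly less than $p$ when $\widetilde{Y'}$ has $\leq n$ parts; this follows because the $n+1$ vertices were chosen to lie in distinct parts of $D$, so collapsing two of them in $\widetilde{Y'}$ collapses two distinct parts of $D$. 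I would spell this out with one carefully drawn diagram rather than a general formula.
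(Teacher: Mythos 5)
Your proposal is correct and is essentially the paper's own argument: reduce to type $k\to 0$, factor $D$ through $Y$ by attaching $n+1$ auxiliary vertices to $n+1$ distinct parts of $D$, apply Lemma \ref{reduce Y}, and observe that any $Z:n+1\to 0$ with at most $n$ parts must merge two of those distinct parts, so the part count strictly drops. The paper packages the factorization more cleanly as $D=Y\circ\overline{D}$, where $\overline{D}:k\to n+1$ is $D$ with a new top row of $n+1$ vertices each joined to a different part; this single diagram does exactly the bookkeeping you were worried about and avoids the detour through $\comult$'s and the ``co-$Y$'' $Y^{*}:0\to n+1$.
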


\begin{proof}
    Since the assignments (\ref{cap it}) and (\ref{uncap it}) preserve the number of parts in a partition diagram, and prescribe a bijection $\Hom_{\JP(n)}(k,\ell)\to\Hom_{\JP(n)}(k+\ell,0)$, it suffices to prove the lemma for partition diagrams of type $k\to 0$. Suppose $D:k\to 0$ has $p>n$ parts. We will show that $D$ is equal to a linear combination of partition diagrams with fewer than $p$ parts, and the lemma will follow by induction. Let $\overline{D}:k\to n+1$ denote any partition diagram obtained from $D$ by adding a row of $n+1$ vertices above $D$ and connecting each of those vertices to a different part of $D$ (which can be done since $p>n$). For example, if $n=3$ and 
    \[
        D=
        \begin{tikzpicture}[baseline = 8pt, scale=0.75]
            \draw \foreach \m in {0,...,9} {
            (0.5*\m,0) node[circle, draw, fill=black]{}
            };
            \draw[-,thick] (0,0) to[out=up,in=up] (2.5,0) to[out=up,in=up] (4.5,0);
            \draw[-,thick] (1,0) to[out=up,in=up] (1.5,0) to[out=up,in=up] (3,0);
            \draw[-,thick] (3.5,0) to[out=up,in=up] (4,0);
        \end{tikzpicture}
        \quad\text{then we can take }\quad
        \overline{D}=
        \begin{tikzpicture}[baseline = 8pt, scale=0.75]
            \draw \foreach \m in {0,...,9} {
            (0.5*\m,0) node[circle, draw, fill=black]{}
            };
            \draw[-,thick] \foreach \m in {0,0.5,2,3.5} {
            (\m,1.1) node[circle, draw, fill=black]{}
            (\m,0) to (\m,1.1)
            };
            \draw[-,thick] (0,0) to[out=up,in=up] (2.5,0) to[out=up,in=up] (4.5,0);
            \draw[-,thick] (1,0) to[out=up,in=up] (1.5,0) to[out=up,in=up] (3,0);
            \draw[-,thick] (3.5,0) to[out=up,in=up] (4,0);
        \end{tikzpicture}
    \]
    Note that by construction $Z\circ \overline{D}$ is a partition diagram with fewer than $p$ parts whenever $Z:n+1\to0$ is a partition diagram with fewer than $n+1$ parts. Thus, it follows from Lemma \ref{reduce Y} that $D=Y\circ \overline{D}$ is a linear combination of partition diagrams with fewer than $p$ parts.
\end{proof}

\begin{lemma}\label{jelly-D span} 
Suppose $D:k\to n$ is a partition diagram with at most $n$ parts. Then, in $\JP(n)$, $\jelly\circ D$ is either zero or equal to $\pm\jelly_{D'}$ for some partition diagram $D':k\to 0$ having either $n$ or $n-1$ parts.  
\end{lemma}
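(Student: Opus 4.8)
The plan is to analyze $\jelly\circ D$ directly as a jellyfish diagram: place a jellyfish above $D$ and glue its $i$-th leg to the $i$-th top vertex of $D$. Everything then follows from a case analysis on how the $n$ top vertices of $D$ are distributed among the parts of $D$, together with the elementary jellyfish moves recorded in \S\ref{JP diagrams}--\S\ref{JP diagram moves}.

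First I would dispose of the vanishing cases. If two top vertices $i',j'$ of $D$ lie in the same part of $D$, then in $\jelly\circ D$ two legs of the same jellyfish are joined by a path, so $\jelly\circ D=0$ by the computation in \S\ref{JP diagram moves} (which uses $\operatorname{char}\k\neq 2$ and relation (\ref{skew form relation})). Since $D$ has $n$ top vertices, this in particular handles the case where $D$ has fewer than $n$ parts. So I may assume every part of $D$ contains at most one of $1',\dots,n'$; counting then forces $D$ to have exactly $n$ parts, each containing exactly one top vertex. If two of these parts were singletons (a top vertex with no bottom vertex attached), then $\jelly\circ D$ would have two dangling legs and hence vanish, again by the remark in \S\ref{JP diagram moves} and $\operatorname{char}\k\neq2$.

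In the surviving case at most one part of $D$ is such a top-singleton. Let $D':k\to 0$ be the partition diagram whose parts are the intersections with $\{1,\dots,k\}$ of the parts of $D$, discarding the empty one if it occurs; then $D'$ has $n$ parts if no part of $D$ is a top-singleton, and $n-1$ parts otherwise. It remains to identify $\jelly\circ D$ with $\pm\jelly_{D'}$. For this I would use: (i) a jellyfish leg may be slid along its connected component to any other vertex of that component (\S\ref{JP diagrams}), so each non-dangling leg may be assumed attached to the leftmost vertex of its part of $D'$, the remaining leg (in the $n-1$ case) being dangling; and (ii) reordering the legs of the jellyfish amounts to precomposing $\jelly$ with a permutation $\sigma\in S_n$, which by (\ref{skew form relation}) changes the morphism only by the sign $(-1)^\sigma$, so the legs can be arranged to emanate in the left-to-right order of the leftmost vertices of the parts of $D'$, with the dangling leg placed rightmost. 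The resulting diagram is by definition $\jelly_{D'}$, up to the overall sign from $\sigma$, which gives $\jelly\circ D=\pm\jelly_{D'}$.

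The only points requiring care are the counting step in the second paragraph and the handling of dangling legs: one must check that ``every part meets $\{1',\dots,n'\}$ in at most one vertex'' together with the hypothesis ``$D$ has at most $n$ parts'' forces exactly $n$ parts with exactly one top vertex each, and that a part consisting of a single top vertex genuinely yields a dangling leg (it contributes a copy of $\un$ at that position, which a leg attached to it slides onto). Neither is difficult, but this is precisely where the hypothesis on the number of parts of $D$ and the standing assumption $\operatorname{char}\k\neq2$ are used; the rest is bookkeeping with the diagrammatic relations already established.
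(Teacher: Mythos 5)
Your proposal is correct and follows essentially the same route as the paper's proof: the same case analysis (two top vertices in one part gives connected legs and hence zero; two isolated top vertices give two dangling legs and hence zero; otherwise exactly $n$ parts each meeting the top row once, yielding $D'$ with $n$ or $n-1$ parts), and the same use of relation (\ref{skew form relation}) to permute legs into the configuration $\pm\jelly_{D'}$. Your extra remarks on leg-sliding and the counting step just make explicit what the paper leaves implicit.
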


\begin{proof}
    If two of the top vertices of $D$ are in the same part, then $\jelly\circ D$ will have two connected jellyfish legs, whence $\jelly\circ D=0$. Thus it suffices to consider the case where each of the $n$ top vertices of $D$ is in a separate part. Since we assume $D$ has at most $n$ parts, it follows that $D$ has exactly $n$ parts. Now, if two of the top vertices of $D$ are isolated (i.e.~not connected to any other vertex), then $\jelly\circ D$ will have two dangling legs, which implies $\jelly\circ D=0$. Hence we may assume that $D$ has at most one isolated vertex in the top row. In this case the bottom vertices of $D$ must be partitioned into $n$ or $n-1$ parts, each of which is connected to one of the non-isolated top vertices of $D$. Let $D':k\to 0$ denote the partition diagram whose vertices are in the same part precisely when the corresponding bottom vertices of $D$ are in the same part. Then $D'$ has $n$ or $n-1$ parts. Moreover, $\jelly_{D'}$ can be obtained from $\jelly\circ D$ by permuting some of the jellyfish legs. Thus $\jelly\circ D=\pm\jelly_{D'}$ by (\ref{skew form relation})
\end{proof}

\begin{proposition}\label{basis for JP}
The set $\{D\}_D\cup\{j_D\}_D$ is a basis for the space $\Hom_{\JP(n)}(k,0)$ where the first (resp.~second) set is indexed by all partition diagrams $D:k\to0$ having at most $n$ parts (resp.~having $n$ or $n-1$ parts).
\end{proposition}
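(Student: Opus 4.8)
The plan is to first show the indicated set spans $\Hom_{\JP(n)}(k,0)$, and then show it is linearly independent by a dimension count matching (\ref{dimensions for A}). For the spanning claim, I would start from the diagrammatic description of $\JP(n)$ in \S\ref{JP diagrams}: every morphism in $\Hom_{\JP(n)}(k,0)$ is a $\k$-linear combination of jellyfish diagrams of type $k\to 0$, each having some finite number of jellyfish. If a jellyfish diagram has two or more jellyfish, apply the jellyfish relation (\ref{jellyfish relation}) to reduce the number of jellyfish by two, at the cost of introducing a sum over $S_n$ of partition diagrams; iterating, every morphism becomes a combination of partition diagrams (zero jellyfish) and jellyfish diagrams with exactly one jellyfish. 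By Lemma \ref{span partition diagrams}, the partition-diagram part is a combination of partition diagrams with at most $n$ parts. For a jellyfish diagram with exactly one jellyfish, I would cut the $n$ legs just below the body to express it as $\jelly\circ D$ for some partition diagram $D:k\to n$; then Lemma \ref{span partition diagrams} (applied in the form allowing a target of $n$ rather than $0$, via the cap/uncap isomorphisms (\ref{cap it})--(\ref{uncap it}) which preserve part-counts) rewrites $D$ as a combination of partition diagrams $D_i:k\to n$ each with at most $n$ parts, and Lemma \ref{jelly-D span} says each $\jelly\circ D_i$ is either $0$ or $\pm\jelly_{D'_i}$ with $D'_i:k\to 0$ having $n$ or $n-1$ parts. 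Hence $\{D\}_D\cup\{\jelly_D\}_D$ with the stated index sets is a spanning set.

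For linear independence, I would apply $\Psi$. By (\ref{x to f}) and the definition of $\Psi$ we have $\Psi(x_D)=f_D$, and since the transition matrix between $\{D\}$ and $\{x_D\}$ (over partition diagrams of type $k\to 0$ with at most $n$ parts) is unitriangular with respect to $\leq$, the images $\{\Psi(D)\}_D = \{\Phi(D)\}_D$ for such $D$ are linearly independent iff the $\{f_D\}_D$ are, which holds by Proposition \ref{prop: f basis} (restricting to at most $n$ parts yields an independent set, indeed part of a basis). For the jellyfish part, part (2) of Proposition \ref{jelly-D} gives $\Psi(\jelly_D)=f_D^+ - f_D^-$. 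The key point is that $\Psi$ sends the proposed spanning set onto (a spanning set equivalent to) the basis $\{f_D\}_D\cup\{f_D^\pm\}_D$ of $\Hom_{A_n}(V^{\otimes k},\unit)$ from Proposition \ref{prop: f pm basis}: for $D$ with at most $n-2$ parts, $\Psi(x_D)=f_D$; for $D$ with $n$ or $n-1$ parts, $\Psi(x_D)=f_D$ and $\Psi(\jelly_D)=f_D^+-f_D^-$, and since $f_D = f_D^+ + f_D^-$ the pair $\{\Psi(x_D),\Psi(\jelly_D)\}$ spans the same $2$-dimensional space as $\{f_D^+,f_D^-\}$ (here I use $\operatorname{char}\k\neq 2$). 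Therefore $\Psi$ maps the $\k$-span of the proposed set onto all of $\Hom_{A_n}(V^{\otimes k},\unit)$, whose dimension is exactly (\ref{dimensions for A}) — namely $\sum_{p=0}^{n-2}\Stirling(k,p) + 2\Stirling(k,n-1) + 2\Stirling(k,n)$ — which equals the cardinality of the proposed index set (one diagram for each $D:k\to 0$ with at most $n$ parts, contributing $\sum_{p=0}^n \Stirling(k,p)$, plus one $\jelly_D$ for each $D:k\to 0$ with $n$ or $n-1$ parts, contributing $\Stirling(k,n-1)+\Stirling(k,n)$). A spanning set whose size equals the dimension of a space onto which it surjects must be a basis, so the proposed set is a basis of $\Hom_{\JP(n)}(k,0)$, which also forces $\Psi$ to be injective on it.

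The main obstacle I anticipate is the spanning step, specifically verifying carefully that after one pass of the jellyfish relation one genuinely reduces to the case of zero or one jellyfish (tracking that the $S_n$-sum produces honest partition diagrams, possibly with dangling legs, which are then handled by Lemmas \ref{span partition diagrams} and \ref{jelly-D span}), and that the various cap/uncap reductions interact correctly with the presence of a jellyfish. Once the spanning set is pinned down, the independence is essentially forced by the dimension bookkeeping, since Theorem \ref{Psi is full} already gives $\dim\Hom_{\JP(n)}(k,0)\geq \dim\Hom_{A_n}(V^{\otimes k},\unit)$ and the spanning set exhibits the reverse inequality. I would also remark that this proposition immediately yields Theorem \ref{Psi is faithful} by the commutative square (\ref{Reduce to invariants alt}): $\Psi$ is injective on $\Hom_{\JP(n)}(k,0)$ for every $k$, hence (via the isomorphisms in (\ref{Reduce to invariants alt})) injective on every $\Hom_{\JP(n)}(k,\ell)$.
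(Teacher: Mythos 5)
Your proposal is correct and follows essentially the same route as the paper: the spanning step (reduce to at most one jellyfish via relation (\ref{jellyfish relation}), then invoke Lemmas \ref{span partition diagrams} and \ref{jelly-D span}) is identical, and your linear-independence step is just the paper's cardinality comparison with $\dim_\k\Hom_{A_n}(V^{\otimes k},\unit)$ with the fullness computation of Theorem \ref{Psi is full} unwound explicitly (computing $\Psi(x_D)=f_D$ and $\Psi(\jelly_D)=f_D^+-f_D^-$) rather than cited. No gaps.
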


\begin{proof} By Proposition \ref{prop: f pm basis}, the set $\{D\}_D\cup\{j_D\}_D$ described in this proposition has size $\dim_\k\Hom_{A_n}(V^{\otimes k},\unit)$. Moreover, $\dim_\k\Hom_{\JP(n)}(k,0)\geq\dim_\k\Hom_{A_n}(V^{\otimes k},\unit)$ by Theorem \ref{Psi is full}. Thus, it suffices to show $\{D\}_D\cup\{j_D\}_D$ spans $\Hom_{\JP(n)}(k,0)$. Every morphism in $\Hom_{\JP(n)}(k,0)$ equals a linear combination of jellyfish diagrams. By (\ref{jellyfish relation}) it suffices to show that every jellyfish diagram with at most one jellyfish is in the desired span. By Lemma \ref{span partition diagrams} all partition diagrams are in the desired span. On the other hand, every jellyfish diagram of type $k\to 0$ with exactly one jellyfish can be written as $j\circ D$ for some partition diagram $D:k\to n$. By Lemma \ref{span partition diagrams}, $D$ is equal to a linear combination of partition diagram having at most $n$ parts. Thus, it suffices to show $j\circ D$ is in the desired span whenever $D:k\to n$ is a partition diagram with at most $n$ parts. This follows from Lemma \ref{jelly-D span}.   
\end{proof}

\subsection{Proof of Theorem \ref{Psi is faithful}}\label{proof of main result}
As a consequence of Propositions \ref{prop: f pm basis} and \ref{basis for JP} we have 
\[\dim_\k\Hom_{\JP(n)}(k,0)=\dim_\k\Hom_{A_n}(V^{\otimes k},\unit)\]
for all $k\geq0$. Using the $\k$-linear isomorphisms given by the horizontal maps in (\ref{Reduce to invariants alt}) it follows that 
\[\dim_\k\Hom_{\JP(n)}(k,\ell)=\dim_\k\Hom_{A_n}(V^{\otimes k},V^{\otimes \ell})\]
for all $k,\ell\geq0$. Thus, the fullness of $\Psi$ (Theorem \ref{Psi is full}) implies $\Psi$ is also faithful. \qed

\bibliographystyle{alphanum}    
\bibliography{references}   

\end{document}